\newtheorem{theorem}{Theorem}[section]
\newtheorem{lemma}[theorem]{Lemma}
\newtheorem{proposition}[theorem]{Proposition}
\newtheorem{remark}[theorem]{Remark}
\newtheorem{corollary}[theorem]{Corollary}
\theoremstyle{definition}
\newtheorem*{definition}{Definition}
\title{Hankel determinants of a Sturmian sequence}
\author{Haocong Song}
\address[H.-C. Song]{School of Mathematics, South China University of Technology, Guangzhou 510640, China}
\email{song96925@gmail.com}
\author{Wen Wu$^*$}\thanks{$^*$Wen Wu is the corresponding author.}
\address[W. Wu]{School of Mathematics, South China University of Technology, Guangzhou 510640, China}
\email[corresponding author]{wuwen@scut.edu.cn}
\date{}
\begin{document}
	\begin{abstract}
		Let $\tau$ be the substitution $1\to 101$ and $0\to 1$ on the alphabet $\{0,1\}$. The fixed point of $\tau$ leading by 1, denoted by $\mathbf{s}$, is a Sturmian sequence. We first give a characterization of $\mathbf{s}$ using $f$-representation. Then we show that the distribution of zeros in the determinants induces a partition of integer lattices in the first quadrant. Combining those properties, we give the explicit values of the Hankel determinants $H_{m,n}$ of $\mathbf{s}$ for all $m\ge 0$ and $n\ge 1$. 
	\end{abstract}
	\maketitle
	\setcounter{tocdepth}{1}
	\tableofcontents

	\section{Introduction}
	Let $\mathbf{s}=(s_j)_{j\ge 0}$ be an integer sequence. For all  $m\geq 0$, $n\geq 1$, the $(m,n)$-order Hankel matrix of $\mathbf{s}$ is  
    \[M_{m,n}:=(s_{m+i+j})_{0\leq i,j\leq n-1}=
    \begin{pmatrix}
        s_m & s_{m+1} & \cdots & s_{m+n-1}\\
        s_{m+1} & s_{m+2} & \cdots & s_{m+n}\\
        \vdots & \vdots & \ddots & \vdots\\
        s_{m+n-1} & s_{m+n}  & \cdots & s_{m+2n-2}.
    \end{pmatrix}.\]
    The $(m,n)$-order Hankel determinant of $\mathbf{s}$ is $H_{m,n}=\det M_{m,n}$.

	Hankel determinants of automatic sequences have been widely studied, due to its application to the study of irrationality exponent of real numbers; see for example \cite{APWW98, Bug11, Coons13, TL1, PDS, PF2, keijo15} and references therein. In 2016, Han \cite{Han16} introduced the Hankel continued fraction which is a powerful tool for evaluating Hankel determinants. By using the Hankel continued fractions, Bugeaud, Han Wen and Yao \cite{BHWY16} characterized the irrationality exponents of values of certain degree two Mahler functions at rational points. Recently, Guo, Han and Wu \cite{GHW20} fully characterized apwwenian sequences, that is $\pm 1$ sequences whose Hankel determinants $H_{0,n}$ satisfying $H_{0,n}/2^{n-1}\equiv 1\pmod 2$ for all $n\geq 1$.
	
	However, the Hankel determinants of other low complexity sequences, such as Sturmian sequences, are rarely known. Kamae, Tamura and Wen \cite{TAMURA} explicitly evaluated the Hankel determinants of the Fibonacci word. Tamura \cite{TAM} extended this result to infinite words generated by the substitutions $a\to a^kb, b\to a$ ($k\ge 1$). In this paper, we study the Hankel determinants of the sequence generated by the substitution
	\[\tau: 1\to 101,\, 0\to 1.\] 
	Denote by $\mathbf{s}=(s_{n})_{n\ge 0}=\lim_{n\to\infty}\tau^{n}(1)$ the fixed point of $\tau$. It follows from \cite[Proposition 2.1]{TW03} that $\mathbf{s}$ is a Sturmian sequence.  
	
	\begin{figure}[htbp]
    	\centering
    	\includegraphics[height=\textwidth, trim={1.8cm 1.8cm 1.8cm 1.8cm},clip,angle =90]{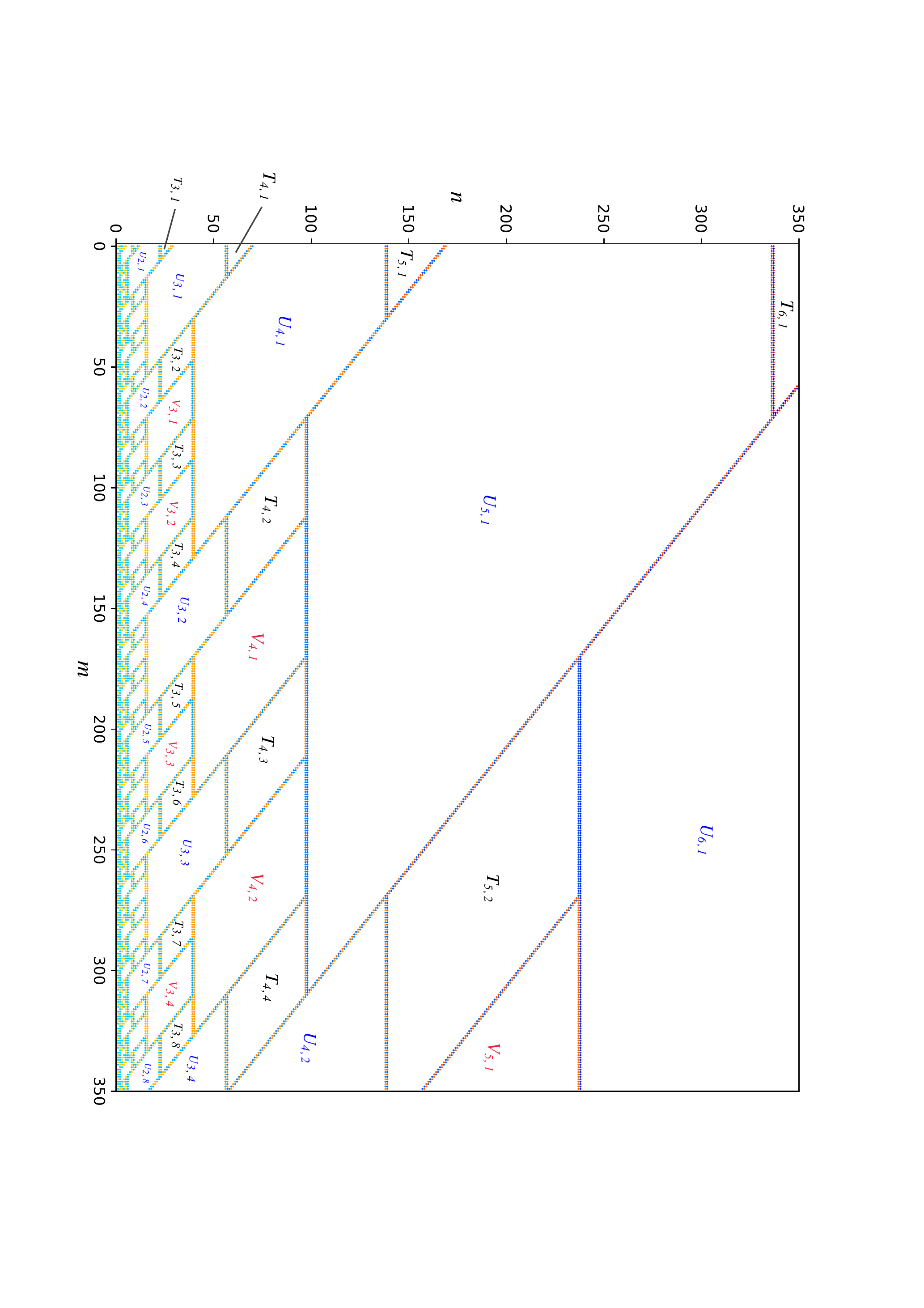}
    	\caption{Visualization of the Hankel determinants $H_{n,m}$ ($0\leq n,m\leq 350$).}
    	\label{fig:intro}
	\end{figure}
	
	We give the explicit values of Hankel determinants $H_{m,n}$ for the sequence $\mathbf{s}$ for all $m\ge 0$ and $n\ge 1$. The distribution of first values of $H_{m,n}$ can be seen from Figure \ref{fig:intro}, where we use different colors to indicate different values of $H_{m,n}$. In particular, the white color indicates that the Hankel determinants are zero. The zeros of the Hankel determinants (together with the non-zero boundaries) form three types of parallelograms labelled by $U_{k,i}$, $V_{k,i}$ and $T_{k,i}$ (for detailed definitions, see Section \ref{sec:3}). In fact, those parallelograms are disjoint and they tile the lattices in the first quadrant; see Proposition \ref{lem20} in Section \ref{sec:3}. This nice property allows to evaluate the Hankel determinants $H_{m,n}$ according to the parallelograms. 
	
	To state our main result, we need some notations. Let $U_k=\cup_{i\ge 1}U_{k,i}$, $V_{k}=\cup_{i\ge 1}V_{k,i}$ and $T_{k}=\cup_{i\ge 1}T_{k,i}$. The integer sequence $(f_n)_{n\ge 0}$ is given by $f_{2j}=|\tau^{j}(1)|$ and $f_{2j+1}=|\tau^{j}(10)|$ for all $j\ge 0$, where $|w|$ denotes the number of digits in the word $w$. To avoid repeating lengthy definitions, please see Section \ref{sec:2} for the truncated $f$-representation $\Phi_k(\cdot)$ and see Section \ref{sec:3} for the sequences $(\alpha_i)_{i\ge 1}$, $(\beta_i)_{i\ge 1}$ and $(\gamma_i)_{i\ge 1}$. The Hankel determinants $H_{m,n}$ for $(m,n)$ in parallelograms of type $U_{k,i}$ (resp.  $V_{k,i}$ and $T_{k,i}$) are given in the following results.

	\begin{theorem} \label{thm1}
		Let $k\ge 0$. For all $(m,n)\in U_{k}$, 
		\begin{enumerate}
			\item when $n=f_{2k+3}-1$, $H_{m,n}=(-1)^{k+1}(-1)^{\frac{f_{2k+5}}{2}-\Phi_{k+1}(m+n)} \cdot\frac{f_{2k+1}}{2}$;
			\item when $n=f_{2k}$, 
			$H_{m,n}=(-1)^{k+1}(-1)^{\frac{f_{2k+2}-1}{2}}\cdot \frac{f_{2k+1}}{2}$;
			\item when $f_{2k}<n<f_{2k+3}-1$, if  $m+n=\alpha_i-f_{2k+2}+1$ or $\alpha_i$ for some $i\ge 1$, then 
			\[H_{m,n}= -(-1)^{(f_{2k+3}-n)k}(-1)^{\frac{(f_{2k+3}-1-n)(f_{2k+3}-2-n)}{2}}\cdot \frac{f_{2k+1}}{2};\]
			otherwise $H_{m,n}=0$.
		\end{enumerate}
	\end{theorem}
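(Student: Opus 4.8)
The plan is to evaluate the Hankel determinants on the parallelograms $U_{k,i}$ by combining two structural facts: first, the $f$-representation characterization of $\mathbf{s}$ from Section~\ref{sec:2}, which lets me read off the local structure of the sequence near any index via the truncated representation $\Phi_k$; and second, the tiling property (Proposition~\ref{lem20}) which guarantees that the three families $U$, $V$, $T$ partition the lattice, so that I may legitimately compute $H_{m,n}$ one cell at a time. The overarching technical engine will be Han's Hankel continued fraction machinery together with the standard determinant identities for Hankel matrices---principally the Jacobi/Dodgson condensation identity relating $H_{m,n}$, $H_{m,n-1}$, $H_{m+1,n-1}$, $H_{m+1,n-2}$ and the border determinants. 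These identities turn the evaluation into a recursion that I can propagate along each parallelogram once I have correct boundary data.

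\medskip
I would organize the argument as follows. First I would fix $k$ and analyze one ``column'' of the determinant landscape, namely the values $H_{m,n}$ as $(m,n)$ ranges over a single $U_{k,i}$, using the substitution structure of $\tau$ to relate the window $s_m s_{m+1}\cdots s_{m+2n-2}$ to a corresponding window at level $k$. The $f$-representation is exactly the device that makes this rescaling precise: writing $m+n$ in the $f$-base and truncating via $\Phi_{k+1}$ isolates the residual offset that controls the sign. Second, I would establish the two ``edge'' cases (1) $n=f_{2k+3}-1$ and (2) $n=f_{2k}$ directly, since these are the boundary rows/columns of $U_k$ where the determinant is nonzero and where the tiling meets the adjacent $V$ and $T$ regions; here the value $\tfrac{f_{2k+1}}{2}$ should emerge as the determinant of a small, explicitly identifiable Hankel block whose size is governed by $f_{2k+1}$. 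Third, for the interior case (3) with $f_{2k}<n<f_{2k+3}-1$, I would show that the nonzero determinants occur precisely along the two diagonal lines $m+n=\alpha_i-f_{2k+2}+1$ and $m+n=\alpha_i$, and that everywhere else the matrix is singular; the singularity should follow from a repeated-block structure in the Hankel matrix forced by the $\tau$-images, giving linearly dependent rows.

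\medskip
The sign bookkeeping is where I expect to spend the most care rather than insight: the factors $(-1)^{k+1}$, $(-1)^{f_{2k+5}/2-\Phi_{k+1}(m+n)}$, and the quadratic exponent $\tfrac{(f_{2k+3}-1-n)(f_{2k+3}-2-n)}{2}$ must be reconciled across the condensation recursion, and a single miscount propagates. I would pin these down by checking the recursion's base step against the explicitly computed small determinants (the picture in Figure~\ref{fig:intro} serves as a sanity check on the vanishing pattern), then verify that each application of the condensation identity transforms the sign exponent in exactly the predicted way, paying attention to the parity of the Fibonacci-like numbers $f_j$ and to the permutation sign incurred when a Hankel block is reflected.

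\medskip
\textbf{The main obstacle} will be proving the \emph{vanishing} half of part (3)---that $H_{m,n}=0$ off the two distinguished diagonals---uniformly across the interior of $U_k$, since this requires exhibiting an explicit linear dependence among the rows (equivalently, a nonzero kernel vector) whose form varies with the position inside the parallelogram. I expect to handle this by lifting the dependence from a smaller level via the substitution: the image under $\tau$ of a singular configuration at level $k$ should remain singular at the ambient level, and the $f$-representation makes the lift explicit. Controlling the \emph{non}vanishing on the two diagonals, and matching it to the stated value, then reduces to evaluating a single cofactor, which the continued-fraction expansion delivers.
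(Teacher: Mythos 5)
Your sketch of the vanishing half of part (3) is essentially the paper's argument: Theorem~\ref{lem3} shows that for $(m,n)$ strictly inside a parallelogram two rows (or columns) of $M_{m,n}$ coincide, because the relevant window of $\mathbf{s}$ repeats with period $f_{2k}$ or $f_{2k+3}$ (Lemma~\ref{lem8}). But this is the easy half, and your proposed engine for the hard half --- the nonzero values --- has a genuine gap. The Jacobi/Dodgson condensation identity for Hankel determinants reads $H_{m,n}H_{m+2,n-2}=H_{m,n-1}H_{m+2,n-1}-H_{m+1,n-1}^2$, so to propagate it you must divide by a neighbouring determinant; here the neighbours you would need to divide by are zero throughout the interiors of the parallelograms, so the recursion cannot be pushed across or along the zero regions. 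Likewise, Han's H-fraction machinery evaluates the single-parameter family $H_{0,n}$ and does not by itself deliver the full two-parameter array $H_{m,n}$ that the theorem requires. You have not identified a mechanism that actually produces the values $\pm\frac{f_{2k+1}}{2}$ on the boundary.

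The paper's mechanism is different and more elementary: by Theorem~\ref{lem3} together with Lemma~\ref{lem5}, two columns of $M_{m,n}$ at distance $f_{2k}$ (or two rows at distance $f_{2k+1}$, $f_{2k+3}$) agree in \emph{all but one} entry, and the single discrepancy equals $(-1)^k$ or $(-1)^{k+1}$. Subtracting such columns and expanding along the resulting almost-zero column reduces the order by exactly one and yields the sign recursions along each edge of $U_{k,i}$ (Lemma~\ref{lem13}); Lemmas~\ref{lem-2k}, \ref{lem-2k+1} and Proposition~\ref{prop-i} transport the value between different $i$; and the anchor value $h_k=H_{\frac{f_{2k+1}}{2}+1,f_{2k+3}-1}$ is pinned down by a three-term recurrence $h_k=h_{k-2}-2h_{k-1}$ (Theorem~\ref{lem18}) obtained by passing through the corners shared with $T_{k-1,*}$ and $U_{k-1,*}$, giving $h_k=(-1)^{k+1}\frac{f_{2k+1}}{2}$. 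None of this is condensation; it is repeated rank-one column reduction driven by the $f$-representation. You have also inverted the difficulty: the vanishing is a one-line consequence of repeated rows, whereas the cross-level recurrence for the corner values is where the real work lies, and your proposal does not supply it.
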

	
    \begin{theorem} \label{thm2}
		Let $k\ge 0$. For all $(m,n)\in V_{k}$,
		\begin{enumerate}
			\item when $n=f_{2k+2}-1$, $H_{m,n}=(-1)^{\frac{f_{2k+2}+f_{2k+1}-3}{2}}\cdot \frac{f_{2k+1}}{2}$;
			\item when $n=f_{2k}$,
			$H_{m,n}=(-1)^{k+1}(-1)^{\frac{f_{2k+2}-1}{2}}\cdot \frac{f_{2k+1}}{2}$;
			\item when $f_{2k}<n<f_{2k+2}-1$, if $m+n=\beta_i+1$ or $\beta_i+f_{2k+1}$ for some $i\ge 1$, then \[H_{m,n}=-(-1)^{(f_{2k+2}-n)(k+1)}(-1)^{\frac{(f_{2k+2}-1-n)(f_{2k+2}-2-n)}{2}}\cdot \frac{f_{2k+1}}{2};\]
			otherwise $H_{m,n}=0$.
		\end{enumerate}
	\end{theorem}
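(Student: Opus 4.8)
I would prove Theorem~\ref{thm2} by induction on $k$, exploiting the self-similarity of $\mathbf s$ under $\tau$. Because $\mathbf s=\tau(\mathbf s)$, the blocks $\tau(1)=101$ and $\tau(0)=1$ cut $\mathbf s$ into a rescaled copy of itself, and this rescaling is exactly the passage $f_{n}\mapsto f_{n+2}$ recorded by the $f$-representation of Section~\ref{sec:2}. The first step is to use the characterization of $\mathbf s$ from Section~\ref{sec:2} to describe the entries $s_{m+i+j}$ of the Hankel matrix $M_{m,n}$ when $(m,n)\in V_{k}$: the truncation $\Phi_{k}(m+i+j)$ determines in which $101$- or $1$-block of level $k$ the index $m+i+j$ falls, and hence fixes a periodic pattern of rows and columns that the substitution imposes on $M_{m,n}$.

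The core is a reduction step expressing $H_{m,n}$ for $(m,n)\in V_{k}$ through Hankel determinants of $\mathbf s$ at level $k-1$. I would obtain it by performing the elementary row and column operations dictated by the block pattern above: differencing neighbouring rows and columns collapses each $101$-block and each $1$-block, so that after a suitable reindexing $M_{m,n}$ takes a block-triangular shape whose principal block is again a Hankel matrix of $\mathbf s$ of the smaller order attached to level $k-1$. Taking determinants factors $H_{m,n}$ as that smaller determinant times an explicit scalar: a power of two contributing the constant $f_{2k+1}/2$, and a permutation sign contributing the $(-1)$-exponents in the statement. The tiling Proposition~\ref{lem20} then guarantees that the reduced index again lies in a parallelogram of level $k-1$, so the induction hypothesis (and, at shared edges, Theorem~\ref{thm1}) applies.

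The three cases track the position of $n$ in the admissible band $f_{2k}\le n\le f_{2k+2}-1$ of $V_{k}$. On the two horizontal edges $n=f_{2k+2}-1$ and $n=f_{2k}$ the reduced principal block is of full rank, and evaluating the closed form against the induction hypothesis yields the stated values; the edge $n=f_{2k}$ must coincide with case~(2) of Theorem~\ref{thm1}, since $U_{k}$ and $V_{k}$ abut along it, which is a useful consistency check. For the interior $f_{2k}<n<f_{2k+2}-1$ the block pattern repeats a row (equivalently a column) of $M_{m,n}$, forcing $H_{m,n}=0$, unless the antidiagonal index $m+n$ equals one of the distinguished values $\beta_{i}+1$ or $\beta_{i}+f_{2k+1}$; I would show these are exactly the indices at which a sub-parallelogram boundary breaks the dependence, and that there the reduction again gives $\pm f_{2k+1}/2$ with sign $-(-1)^{(f_{2k+2}-n)(k+1)}(-1)^{(f_{2k+2}-1-n)(f_{2k+2}-2-n)/2}$.

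The main obstacle is twofold. First, the reduction must be set up uniformly over all of $V_{k}$ rather than at isolated indices; the delicate point is to control the boundary entries produced at the two ends of each block so that the block-triangular form is exact, which is where the precise shape of $V_{k}$ and the $f$-representation are indispensable. Second, and most laborious, is the sign bookkeeping: the quadratic exponent $(f_{2k+2}-1-n)(f_{2k+2}-2-n)/2$ and the linear exponent $(f_{2k+2}-n)(k+1)$ must be reproduced exactly by the permutation sign of the reindexing together with the parity contributed by the inductive step, for every $n$ in the band and simultaneously with the analogous signs on the shared edges of $U_{k}$, $V_{k}$ and $T_{k}$. Reconciling all these parities, rather than the underlying rank computation, is the crux.
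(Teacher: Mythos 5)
Your overall architecture --- elementary row/column operations guided by the combinatorics of $\mathbf{s}$, vanishing in the interior of $V_{k,i}$ because a row or column is repeated, sign bookkeeping along the edges, and the tiling of Proposition \ref{lem20} --- is the same as the paper's. But the central reduction you propose does not work as stated. You claim that collapsing the $\tau$-blocks puts $M_{m,n}$ in block-triangular form whose principal block is a Hankel matrix of $\mathbf{s}$ at level $k-1$, so that $H_{m,n}$ factors as that smaller determinant times an explicit scalar, ``a power of two contributing the constant $f_{2k+1}/2$.'' The constants $f_{2k+1}/2=1,2,5,12,29,\dots$ are not powers of two, and more fundamentally they cannot be produced by a one-step multiplicative factor per level: since the entries of $M_{m,n}$ are $0$ and $1$, the complementary block in any block-triangular factorization has integer determinant, whereas the ratio you would need, $f_{2k+1}/f_{2k-1}$ (e.g.\ $5/2$), is not an integer. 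So the induction cannot close in the form you describe.

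What actually happens in the paper is that the passage from level $k$ to level $k-1$ is \emph{additive}, not multiplicative. The row/column differences that vanish are taken at distances $f_{2k}$ or $f_{2k+1}$ (Theorem \ref{lem3}), not between neighbouring rows, and each such operation leaves a single nonzero entry $(-1)^k$ or $(-1)^{k+1}$, so cofactor expansion lowers the order by exactly one; iterating this walks along an edge of the parallelogram and accumulates the quadratic sign $(-1)^{r(r\pm1)/2}$ (Lemmas \ref{lem13}--\ref{lem15}). The cross-level step occurs only at a corner: Lemmas \ref{lem16} and \ref{lem17} express a corner determinant as $(-1)^k$ times a \emph{sum or difference of two} determinants of order one less, each of which lies on the boundary of a level-$(k-1)$ parallelogram. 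Chaining these gives the three-term recurrence $h_k=h_{k-2}-2h_{k-1}$ for $h_k=H_{\frac{f_{2k+1}}{2}+1,\,f_{2k+3}-1}$ (Theorem \ref{lem18}), whose solution is $(-1)^{k+1}f_{2k+1}/2$; Corollary \ref{coro:1} then transports this value to the lower left corner of $V_{k,1}$, and Proposition \ref{prop-i} propagates it to all $i$. Your proposal is missing precisely this two-term (hence non-multiplicative) recursion, which is where the constant $f_{2k+1}/2$ comes from; the rest of your outline (interior vanishing, edge signs, consistency with Theorem \ref{thm1} on the shared edge $n=f_{2k}$) is sound.
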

	
    \begin{theorem} \label{thm3}
		Let $k\ge 0$. For all $(m,n)\in T_{k}$,
		\begin{enumerate}
			\item when $n=f_{2k+2}-1$, $H_{m,n}=(-1)^{\frac{f_{2k}-1}{2}}\cdot f_{2k}$;
			\item when $n=f_{2k+1}$, 
			$H_{m,n}=(-1)^{\Phi_k(m+n)-\frac{f_{2k+1}}{2}-1}\cdot f_{2k}$;
			\item when $f_{2k}<n<f_{2k+2}-1$, if $m+n=\gamma_i-f_{2k}+1$ or $\gamma_i$ for some $i\ge 1$, then \[H_{m,n}=(-1)^{(f_{2k+2}-1-n)k}(-1)^{\frac{(f_{2k+2}-1-n)(f_{2k+2}-2-n)}{2}}(-1)^{\frac{f_{2k}-1}{2}}\cdot f_{2k};\]
			otherwise $H_{m,n}=0$.
		\end{enumerate}
	\end{theorem}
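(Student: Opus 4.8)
The plan is to prove Theorem~\ref{thm3} by induction on $k$, run \emph{simultaneously} with Theorems~\ref{thm1} and \ref{thm2}. The self-similar structure of $\mathbf{s}$ forces the three families of parallelograms to interleave across scales: after elementary row and column operations, a Hankel matrix attached to a cell $T_{k,i}$ should collapse onto a Hankel matrix living at level $k-1$, and that smaller matrix may belong to any of the types $U$, $V$, or $T$. Consequently none of the three theorems can be proved in isolation, and the tiling property (Proposition~\ref{lem20}) is exactly what guarantees that this downward reduction always lands in a cell already covered by the induction hypothesis, with no gaps or overlaps.

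The first step is a scaling lemma describing the transformation of $M_{m,n}$ for $(m,n)\in T_{k,i}$. Because $\mathbf{s}=\tau(\mathbf{s})$, the prefix structure of $\mathbf{s}$ is governed by the numbers $f_n$, and each index $m+i+j$ appearing in the matrix has a truncated $f$-representation $\Phi_k(m+i+j)$ recording how it sits inside a block $\tau^k(\cdot)$. I would use this to group the rows and columns of $M_{m,n}$ into blocks that mirror the images $\tau(1)=101$ and $\tau(0)=1$, so that subtracting suitable rows (and columns) reduces $M_{m,n}$ to block-triangular form. The determinant then factors as an explicit elementary contribution — which accounts for the magnitude $f_{2k}$ — times a lower-order Hankel determinant to which the inductive hypothesis (one of Theorems~\ref{thm1}--\ref{thm3} at level $k-1$) applies.

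With the reduction in hand, the two boundary cases anchor each strip. Case~(1), $n=f_{2k+2}-1$, yields a value independent of $m$, so the reduction collapses to a single known determinant and the sign $(-1)^{(f_{2k}-1)/2}$ is read off directly. Case~(2), $n=f_{2k+1}$, carries the factor $(-1)^{\Phi_k(m+n)}$: as $i$ ranges over the strip the anti-diagonal coordinate $m+n$ advances, its truncated $f$-representation increments, and one tracks this dependence step by step through the reduction. For the interior case~(3), $f_{2k}<n<f_{2k+2}-1$, I would first establish the vanishing $H_{m,n}=0$ away from the two anti-diagonals $m+n\in\{\gamma_i-f_{2k}+1,\gamma_i\}$ by producing an explicit linear dependence among the rows of $M_{m,n}$, coming from the forced repetition of factors of $\mathbf{s}$ inside the cell; on the two distinguished anti-diagonals the scaling lemma delivers the stated nonzero value.

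The hard part will be the bookkeeping of signs. The exponent in case~(3) combines a term linear in $(f_{2k+2}-1-n)$ and $k$ with the triangular term $\tfrac{(f_{2k+2}-1-n)(f_{2k+2}-2-n)}{2}$ and the constant $\tfrac{f_{2k}-1}{2}$, while each elementary operation in the reduction — row permutations, negations, block rearrangements — contributes its own sign. Matching these contributions against the closed forms, and in particular verifying that the $\Phi_k$-dependence from case~(2) propagates consistently onto the interior diagonals of case~(3) through the $U/V/T$ interleaving, is where essentially all of the delicate computation lies; the magnitude $f_{2k}$, by contrast, follows immediately once the block structure is exposed.
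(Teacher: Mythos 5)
Your overall architecture (simultaneous treatment of the three parallelogram types, interior vanishing via an explicit row dependence, sign bookkeeping along edges) is in the right spirit, but the central step of your plan --- the ``scaling lemma'' asserting that for $(m,n)\in T_{k,i}$ the matrix $M_{m,n}$ reduces to block-triangular form whose determinant is \emph{an explicit elementary contribution times a single lower-order Hankel determinant}, with the elementary factor ``accounting for the magnitude $f_{2k}$'' --- cannot work as stated, and this is not how the paper proceeds. A multiplicative factorization would force the magnitude at level $k$ to be an integer multiple of a magnitude occurring at a lower level; but the admissible magnitudes are $f_{2k}$ (type $T_k$) and $\tfrac{f_{2j+1}}{2}$ (types $U_j$, $V_j$), and e.g.\ $f_4=7$ is divisible by neither $f_2=3$ nor $\tfrac{f_3}{2}=2$ nor $\tfrac{f_5}{2}=5$. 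The actual relation is \emph{additive}: the paper's Lemma \ref{lem17} writes the lower-left corner of $T_{k,i}$ as $(-1)^k$ times a \emph{sum} of two determinants of order one less, which Corollary \ref{lem19} then identifies (after transport along edges) with corners of $U_{k-1,1}$ and $U_{k,1}$, giving $f_{2k}=\tfrac{f_{2k-1}}{2}+\tfrac{f_{2k+1}}{2}$. Likewise the $U$-corner values $h_k=(-1)^{k+1}\tfrac{f_{2k+1}}{2}$ are not obtained from any factorization but by solving the three-term recurrence $h_k=h_{k-2}-2h_{k-1}$ (Theorem \ref{lem18}), itself derived from the two-term corner identities of Lemmas \ref{lem16} and \ref{lem17}. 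So your plan is missing the idea that the level-$k$ to level-$(k-1)$ reduction produces a linear combination of \emph{two} smaller determinants, and that the magnitudes then come out of a linear recurrence in $k$ rather than being ``read off'' from a block structure.

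A secondary gap: you do not explain how the value is propagated between the infinitely many congruent cells $T_{k,i}$, $i\ge 1$, within a fixed level $k$. Your reduction, even if repaired, would a priori give a different answer for each $i$; the paper needs the separate translation lemmas (Lemmas \ref{lem-2k}, \ref{lem-2k+1}, packaged in Proposition \ref{prop-i}), which rest on Theorem \ref{lem3} applied to the gap structure of the sets $E_k$ and $F_k$, to show the bottom-edge values are independent of $i$. By contrast, the paper's mechanism for moving along an edge is elementary and does match your intuition: one row (or column) differs from another by a vector with a single nonzero entry $\pm 1$, so each step strips one row and one column off the Hankel matrix at the cost of an explicit sign; this is where the factors $(-1)^{rk}(-1)^{r(r-1)/2}$ in part (3) come from. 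If you replace your multiplicative scaling lemma by the additive corner identities and add the inter-cell translation step, the rest of your outline becomes essentially the paper's proof.
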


	The paper is organized as follows. In Section 2, we introduce the $f$-representation of positive integers and give a criterion (Proposition \ref{prop:2}) to determine $s_n$ according the $f$-representation of $n$. This criterion leads us to the key ingredient (Theorem \ref{lem3}) in calculating the Hankel determinants. Then we introduce the truncated $f$-representation which is essential in describing the parallelograms. In Section \ref{sec:3}, we show that the parallelograms $U_{k,i}$, $V_{k,i}$ and $T_{k,i}$ tile all the integer points in the first quadrant. In Section \ref{sec:4}, we first show that the Hankel determinants vanish when $(m,n)$ is inside a parallelogram of those three types. Next we show the relations of Hankel determinants $H_{m,n}$ on the boundary of a parallelogram $U_{k,i}$ (or $V_{k,i}$, $T_{k,i}$) for a given $k$ and $i$. Finally, for any $k\ge 0$, we describe the relation of values of Hankel determinants for parallelograms $U_{k,i}$ for all $i\ge 1$. In Section \ref{sec:5}, we give the expressions for Hankel determinants on the boundary of $U_{k,i}$ (or $V_{k,i}$, $T_{k,i}$) for all $k\ge 0$. In the last section, we formulate and prove our main results.

    \section{Some properties of the sequence \texorpdfstring{$\mathbf{s}$}{\textbf{s}}}\label{sec:2}
	In this section, we first introduce the $f$-representation of positive integers according to the sequence $(f_n)_{n\ge 0}$. By understanding the occurrences of $0$s in the sequence $\mathbf{s}$, we prove a key result (Proposition \ref{prop:2}) which can determine $s_n$ according to the $f$-representation of $n$. Then we give the essential result (Theorem \ref{lem3}). In subsection \ref{sec:2.3}, we introduce the truncated $f$-representation which is useful in determining the parallelograms. In section \ref{sec:2.4}, we investigate to some sub-sequences of $(f_{n})_{n\ge 0}$ which are need in evaluating the coefficients of the Hankel determinants. Then we characterize two sub-sequences of $\mathbf{s}$ which helps us understand $H_{m,n}$.

    \subsection{The occurrence of 0's in \texorpdfstring{$\mathbf{s}$}{\textbf{s}}.}
    We introduce an auxiliary sequence $(f_n)_{n\ge 0}$ to determine the positions of $0$'s. For all $n\geq 0$, we define 
    \[f_{2n}=|\tau^{n}(1)|\quad \text{and}\quad f_{2n+1}=|\tau^{n}(10)|.\] 
    Then $f_0=1$, $f_1=2$, and for all $n\ge 0$,  
    \begin{equation}\label{eq:f-rec}
        \begin{cases}
            f_{2n+2} = f_{2n} + f_{2n+1},\\
            f_{2n+3} = f_{2n} + f_{2n+2}.\\
        \end{cases}
    \end{equation}
    The first values are \[(f_n)_{n\ge 0}=(1,\, 2,\, 3,\,4,\, 7,\, 10,\, 17,\, 24,\, 41, \, 58,\,99,\,140,\,239,\,338,\,577,\,816,\,\dots).\]
    Since $(f_n)_{n\ge 0}$ is an increasing non-negative integer sequence, it is a numeration system in the following sense.

    \begin{lemma} [Theorem 3.1.1 \cite{AS03}] \label{lem2}
        Let $u_0 < u_1 < u_2 < \dots$ be an increasing sequence of integers with
        $u_0=1$. Every non-negative integer $N$ has exactly one representation of the form $\sum_{0\leq i\leq r}a_iu_i$ where $a_r\neq 0$, and for $i\ge 0$, the digits $a_i$ are non-negative integers satisfying the inequality
        \[a_0u_0 +a_1u_1 +\dots +a_iu_i < u_{i+1}.\]
    \end{lemma}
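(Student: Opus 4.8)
The plan is to prove the two halves of the statement—existence and uniqueness of the representation—separately, both by induction, with the inequality $a_0u_0+\dots+a_iu_i<u_{i+1}$ serving as the engine that pins down each digit greedily. The hypothesis $u_0=1$ is what makes the base case close, and the strict monotonicity of the $u_i$ is what localizes $N$ between two consecutive terms.

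For existence I would prove the slightly stronger statement that every integer $N$ with $0\le N<u_{k+1}$ admits digits $a_0,\dots,a_k\ge 0$ with $N=\sum_{i=0}^{k}a_iu_i$ and $\sum_{i=0}^{j}a_iu_i<u_{j+1}$ for every $0\le j\le k$, by induction on $k$. The base case $k=0$ reduces to $0\le N<u_1$, where $u_0=1$ forces the single choice $a_0=N$. For the inductive step I would take $N$ with $u_k\le N<u_{k+1}$ (the subrange $N<u_k$ being handled by the hypothesis for $k-1$ together with a leading zero $a_k=0$), set $a_k=\lfloor N/u_k\rfloor\ge 1$, and apply the hypothesis to $N'=N-a_ku_k$, which satisfies $0\le N'<u_k$. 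Concatenating $a_k$ with the digits of $N'$ yields all the partial-sum inequalities at once: those for $j<k$ come from the hypothesis applied to $N'$, while the top one, $j=k$, is exactly $N<u_{k+1}$. Taking $r$ to be the largest index with $a_r\neq 0$ then gives the asserted form.

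For uniqueness, suppose $N=\sum_{i=0}^{r}a_iu_i=\sum_{i=0}^{s}b_iu_i$ are two valid representations (the case $N=0$ being trivial, as only the empty sum qualifies). First I would show $r=s$: since $a_r\ge 1$ we have $N\ge u_r$, while the inequality at level $j=r$ gives $N<u_{r+1}$, so $u_r\le N<u_{r+1}$ determines $r$ as the unique index with this property, and the same bounds force $s=r$. Next, the inequality at level $j=r-1$ gives $\sum_{i=0}^{r-1}a_iu_i<u_r$, whence $a_ru_r\le N<(a_r+1)u_r$ and $a_r=\lfloor N/u_r\rfloor$ is forced; likewise $b_r=\lfloor N/u_r\rfloor$, so $a_r=b_r$. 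Subtracting $a_ru_r$ reduces the problem to two valid representations of $N-a_ru_r<u_r$, and strong induction on $N$ completes the argument.

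The routine parts are the floor arithmetic and the bookkeeping of partial sums; the only point demanding genuine care is that the inequality must be maintained at \emph{every} level simultaneously, not merely for the leading term. This is precisely what makes the greedy construction valid on the existence side and what forces each digit from the top down on the uniqueness side. Since the statement is quoted verbatim as Theorem~3.1.1 of \cite{AS03}, I would ultimately be content to cite it, but the induction above is the self-contained argument I would reconstruct.
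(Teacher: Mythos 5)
Your proof is correct: the existence half (strong statement for all $N<u_{k+1}$, greedy choice $a_k=\lfloor N/u_k\rfloor$, with the partial-sum inequalities inherited from the inductive hypothesis) and the uniqueness half (pinning down $r$ by $u_r\le N<u_{r+1}$ and then each leading digit by the floor identity) both go through. The paper itself gives no proof of this lemma — it is quoted verbatim as Theorem 3.1.1 of Allouche--Shallit — and your greedy-digit induction is essentially the standard argument given in that reference, so there is nothing to compare against within the paper.
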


    \begin{proposition}\label{prop:1}
        Every integer $n\ge 0$ can be uniquely expressed as $n=\sum_{0\leq i\leq r}a_if_i$ with $a_i\in\{0,1\}$, $a_r\neq 0$, and         
        \begin{equation}\label{eq:1}
            \begin{cases}
                a_ia_{i+1}=0 \text{ for all } 0\leq i< r,\\
                a_{i}a_{i+2}=0 \text{ for all even numbers }  0\leq i < r-1.
            \end{cases}
        \end{equation}  
    \end{proposition}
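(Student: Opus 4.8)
The plan is to derive Proposition \ref{prop:1} from the general numeration theorem, Lemma \ref{lem2}, applied to $(f_n)_{n\ge 0}$, and then to match its greedy digit condition with the combinatorial conditions \eqref{eq:1}. Lemma \ref{lem2} already supplies, for each $n\ge 0$, a unique representation $n=\sum_{0\le i\le r}a_if_i$ with $a_r\neq 0$ obeying the greedy inequality
\[
a_0f_0+a_1f_1+\cdots+a_if_i<f_{i+1}\qquad\text{for all }i\ge 0,
\]
which I abbreviate as condition (G). Thus the task reduces to proving, for binary digit strings, the equivalence of (G) with the two conditions in \eqref{eq:1}: existence of the desired representation will follow by checking that the greedy representation satisfies \eqref{eq:1}, and uniqueness by checking that any binary representation satisfying \eqref{eq:1} must satisfy (G), whereupon Lemma \ref{lem2} forces it to be the greedy one.

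First I would record the two consequences of \eqref{eq:f-rec} that power the argument: the identity $f_{2n+1}=f_{2n-2}+f_{2n}$ (which is \eqref{eq:f-rec} after shifting $n$ by one) and the estimate $f_{i+1}<2f_i$ for $i\ge 1$ (immediate since $(f_n)$ is increasing). Combining the estimate with (G) gives $a_if_i<f_{i+1}<2f_i$, hence $a_i\in\{0,1\}$; the case $i=0$ follows from $a_0<f_1=2$. So the greedy digits are automatically binary. For the forward implication I would argue by contraposition, showing a forbidden pattern breaks (G). If $a_i=a_{i+1}=1$, then the partial sum through index $i+1$ is at least $f_i+f_{i+1}$, and the recurrences give $f_i+f_{i+1}\ge f_{i+2}$ (with equality exactly when $i$ is even), contradicting (G) at $i+1$. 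If $a_i=a_{i+2}=1$ with $i$ even, the partial sum through $i+2$ is at least $f_i+f_{i+2}=f_{i+3}$, again contradicting (G). Hence the greedy representation satisfies \eqref{eq:1}, which gives existence.

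The converse is the technical core, and I would prove it by induction on the largest index $i$ with $a_i=1$, splitting on the parity of $i$ (with the trivial base cases $i\in\{0,1\}$). If $i=2n$ is even, then \eqref{eq:1} forces $a_{2n-1}=a_{2n-2}=0$, so the sum is $f_{2n}$ plus a valid sub-sum over indices $\le 2n-3$; the inductive bound $<f_{2n-2}$ and the identity $f_{2n+1}=f_{2n-2}+f_{2n}$ yield a total $<f_{2n+1}$. If $i=2n+1$ is odd, only $a_{2n}=0$ is forced, and the inductive bound $<f_{2n}$ together with $f_{2n+2}=f_{2n}+f_{2n+1}$ yields a total $<f_{2n+2}$; in either case (G) holds. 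The main obstacle is making this induction close cleanly: the subtle point is that the distance-two condition eliminates only the even neighbor---precisely why the second line of \eqref{eq:1} is restricted to even $i$---so the two parity cases must be paired with the correct recurrence ($f_{2n+1}=f_{2n-2}+f_{2n}$ for the even top, $f_{2n+2}=f_{2n}+f_{2n+1}$ for the odd top) for the telescoping to give exactly $f_{i+1}$.
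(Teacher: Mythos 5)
Your proposal is correct and shares its skeleton with the paper's proof: both reduce Proposition \ref{prop:1} to Lemma \ref{lem2} and then prove that, for binary digit strings, the greedy inequality is equivalent to condition \eqref{eq:1}; your forbidden-pattern argument for the forward direction (using $f_i+f_{i+1}\ge f_{i+2}$ and $f_i+f_{i+2}=f_{i+3}$ for even $i$) is exactly the paper's. The two arguments diverge in the converse direction. The paper simply exhibits the extremal admissible digit string for each parity of the top index ($1010\cdots10$ for odd $t$, $1001010\cdots10$ for even $t$) and computes that its value telescopes to $f_{t+1}-1$; this is short but leaves unjustified the claim that these configurations really are maximal. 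You instead run an induction on the largest nonzero index, using that an even top index forces both of its two lower neighbours to vanish while an odd top index forces only one, paired with the matching recurrence from Eq.~\eqref{eq:f-rec} in each case; this is slightly longer but closes the gap the paper glosses over, and your observation that $a_i\in\{0,1\}$ already follows from the greedy inequality via $f_{i+1}<2f_i$ (for $i\ge1$) removes an assumption the paper makes silently. Two small points worth making explicit when you write it up: the truncation of an admissible string at any index is again admissible, so your bound at the top nonzero index does yield the greedy inequality at every index $t$; and the empty sub-sum convention handles the base of the even case when $2n-3<0$.
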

    \begin{proof}
        Suppose $a_i\in\{0,1\}$. By Lemma \ref{lem2}, we only need to show that $a_0f_0+a_1f_1+\dots+a_tf_t < f_{t+1}$ for all $t$ if and only if the condition \eqref{eq:1} holds.

        {\bf The `only if' part.} Suppose there is an index $i$ such that $a_ia_{i+1}=1$. Then 
        \[a_0f_0+a_1f_1+\dots+a_if_i+a_{i+1}f_{i+1} \geq f_i+f_{i+1} \geq f_{i+2},\]
        which is a contradiction for $t=i+1$. Suppose there is an even index $i$ such that $a_ia_{i+2}=1$. Then 
        \[a_0f_0+a_1f_1+\dots+a_if_i+a_{i+1}f_{i+1}+a_{i+2}f_{i+2} \geq f_i+f_{i+2} = f_{i+3},\]
        which is a contradiction for $t=i+2$.

        {\bf The `if' part.} Suppose the condition \eqref{eq:1} holds. When $t$ is odd, the maximum possible value of $a_0f_0+a_1f_1+\dots+a_tf_t$ occurs when $a_ta_{t-1}\dots a_0=1010\dots10$, and this maximum value $f_1+f_3+\dots f_{t-2}+f_{t}=f_{t+1}-1$. When $t$ is even, the maximum possible value of $a_0f_0+a_1f_1+\dots+a_tf_t$ occurs when $a_ta_{t-1}\dots a_0=1001010\dots10$. In this case, the maximum value is $f_1+f_3+\dots +f_{t-5} + f_{t-3} + f_{t} = f_{t+1}-1$.
    \end{proof}

    \begin{definition}[$f$-representation]
        Let  $n\ge 0$ be an integer. We call the representation $n=\sum_{0\leq i\leq r}a_if_i$ in Proposition \ref{prop:1} the \emph{$f$-representation} of $n$. We also write $n=\sum_{i=0}^{+\infty}a_if_i$ where $a_i=0$ for all $i>r$. In the case that we need to emphasize that $a_i$ depends on $n$, we write $a_i=a_i(n)$ as a function of $n$. 
    \end{definition}

    \begin{proposition}\label{prop:2}
        For any integer $n\geq 0$ with the $f$-representation $\sum_{i=0}^{r}a_i(n)f_i$, we have $s_n=0$ if and only if $a_0(n)=1$. 
    \end{proposition}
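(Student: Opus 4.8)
The plan is to prove Proposition~\ref{prop:2} by understanding how zeros in $\mathbf{s}$ arise from the substitution structure, and then reading off the lowest digit $a_0(n)$ of the $f$-representation. The substitution is $\tau:1\to101,\,0\to1$, so a $0$ in $\mathbf{s}$ can only appear as the image $\tau(1)=101$ contributes the central $0$; the letter $0$ itself maps to $1$, never producing a new $0$. This means every occurrence of $0$ in $\mathbf{s}$ is the middle symbol of some block $\tau^j(1)=\tau^{j-1}(101)$, and I would try to pin down these positions exactly in terms of the lengths $f_{2j}=|\tau^j(1)|$.

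Concretely, **first I would** set up an induction that tracks the positions of $0$'s across the factorization $\mathbf{s}=\lim_n\tau^n(1)$. Using $\tau^{j}(1)=\tau^{j-1}(1)\,\tau^{j-1}(0)\,\tau^{j-1}(1)=\tau^{j-1}(1)\,\tau^{j-2}(1)\,\tau^{j-1}(1)$ (the palindrome factorization from the earlier Lemma~\ref{lem1}), the zeros inside $\tau^j(1)$ are the zeros inside the two copies of $\tau^{j-1}(1)$ together with the zeros inside the central $\tau^{j-2}(1)$. This recursive self-similar structure should let me characterize the set $Z=\{n:s_n=0\}$ of zero-positions directly. My aim would be to show that $Z$ is exactly the set of integers whose $f$-representation has $a_0=1$, i.e.\ those $n$ with $f_0=1$ appearing in the greedy-type expansion guaranteed by Proposition~\ref{prop:1}.

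**The cleanest route** is probably to prove both directions simultaneously by relating the map $n\mapsto n+1$ or a suitable shift to the digit $a_0$. Since $f_0=1$, the condition $a_0(n)=1$ together with the admissibility rules~\eqref{eq:1} (which force $a_1=0$ and, because $0$ is even, also $a_2=0$ whenever $a_0=1$) singles out a sparse arithmetic-like subset; I would compute the density and gap structure of $\{n:a_0(n)=1\}$ and match it against the density and gaps of the $0$'s dictated by the palindromic recursion above. Equivalently, I would induct on the length: assuming the equivalence holds for all positions inside $\tau^{j-1}(1)$, verify it for positions inside $\tau^j(1)$ by checking how appending $f_{j}$-sized blocks shifts indices and how that shift acts on the digit $a_0$ of the $f$-representation. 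A convenient reformulation is to show that $s_n$ depends only on $n \bmod$ the local block structure in a way that isolates the parity encoded by $a_0$.

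**The main obstacle** I anticipate is bookkeeping the interaction between the two clauses of the admissibility condition~\eqref{eq:1}: the representation is \emph{not} the standard Zeckendorf expansion, since the even-index rule $a_ia_{i+2}=0$ introduces an extra constraint, and the recursion for $f_n$ alternates between $f_{2n+2}=f_{2n}+f_{2n+1}$ and $f_{2n+3}=f_{2n}+f_{2n+2}$. So the carry behavior when passing from $\tau^{j-1}(1)$ to $\tau^j(1)$ is two-phase, and I would need to verify carefully that adding the block length $f_{j}$ to an index toggles $a_0$ in precisely the way the palindrome split predicts, with no interference from the second admissibility clause. Once that compatibility is established, the equivalence $s_n=0\iff a_0(n)=1$ follows by induction on the level $j$ at which $n$ first appears.
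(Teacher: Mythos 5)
Your plan—induct on the level $j$ using the factorization $\tau^{j}(1)=\tau^{j-1}(1)\,\tau^{j-2}(1)\,\tau^{j-1}(1)$, reduce $s_n$ for $n$ in the second or third block to $s_{n-f_{2j-2}}$ or $s_{n-f_{2j-1}}$, and check that subtracting the leading term of the $f$-representation leaves $a_0$ unchanged—is exactly the paper's proof, and the ``carry'' issue you flag is benign because $n-f_{2k}<f_{2k-2}$ (resp.\ $n-f_{2k+1}<f_{2k}$) forces the remaining digits to sit strictly below the stripped one, so admissibility and $a_0$ are automatically preserved. The side remarks about density/gap structure and the origin of $0$'s as central letters are not needed; the induction branch of your proposal is the correct and complete route.
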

    \begin{proof}
        One can verify directly that the result holds for all $n<f_4=7$. Assume that the result holds for $n< f_{2k}$ where $k\geq 2$. We only need to prove it for all $f_{2k}\leq n< f_{2k+2}$.
        
        Suppose $f_{2k}\le n< f_{2k+1}$. One has $a_{2k}(n)=1$ and hence $a_0(n-f_{2k})=a_0(n)$. Note that $f_{2k}=|\tau^{k}(1)|$ and 
        \begin{equation}\label{eq:prop:2}
            s_0s_1\dots s_{f_{2k+2}-1}=\tau^{k+1}(1)=\tau^{k}(1)\tau^{k}(0)\tau^{k}(1).
        \end{equation} We see that $s_n$ is the $(n+1)$-th letter of $\tau^{k+1}(1)$ and it is also the $(n+1-f_{2k})$-th letter of $\tau^{k}(0)=\tau^{k-1}(1)$. Consequently, $s_{n}=s_{n-f_{2k}}$. 
        Since \[n-f_{2k}< f_{2k+1}-f_{2k} =f_{2k-2},\] by the inductive assumption, we have $s_{n-f_{2k}}=0$ if and only if $a_0(n-f_{2k})=1$.  Therefore, $s_n=0$ if and only if $a_0(n)=1$.

        Suppose $f_{2k+1}\le n< f_{2k+2}$. In this case $a_{2k+1}(n)=1$ and $a_0(n-f_{2k+1})=a_0(n)$. Since $|\tau^{k}(10)|=f_{2k+1}$, it follows from \eqref{eq:prop:2} that $s_n=s_{n-f_{2k+1}}$. Note that 
        \[ n-f_{2k+1}< f_{2k+2}-f_{2k+1} =f_{2k}.\]
        By the inductive assumption, $s_{n-f_{2k+1}}=0$ if and only if $a_{0}(n-f_{2k+1})=1$ which implies the result also holds for all $f_{2k+1}\le n< f_{2k+2}$.
    \end{proof}

\subsection{Comparing digits in the sequence \texorpdfstring{$\mathbf{s}$}{s} with a fixed gap}
We introduce the truncated $f$-representations (of positive integers) which are useful in telling two digits with a fixed gap in $\mathbf{s}$ are equal or not.
\begin{definition}(Truncated $f$-representation)
	Let $n\geq 0$ be an integer with the $f$-representation $\sum_{i=0}^{+\infty}a_i(n)f_i$. For all integers $k\ge 0$, the truncated $f$-representation of $n$ is 
	\[\Phi_{k}(n):=\sum_{i=0}^{2k+2}a_i(n)f_i.\]
\end{definition}

The next lemma gives a criterion that when two digits (with a fixed gap) in $\mathbf{s}$ are equal by using their positions.
    \begin{theorem}\label{lem3}
        Let $n\geq 0$ be an integer with the $f$-representation $\sum_{i=0}^{+\infty}a_i(n)f_i$. Then 
        \begin{enumerate}
            \item[(i)] for all $k\ge 0$, $s_{n+f_{2k}}\neq s_n$ if and only if $\Phi_{k}(n)\in\{\frac{f_{2k+1}}{2}, \frac{f_{2k+1}}{2}-1\}$;
            \item[(ii)] for all $k\ge 1$, $s_{n+f_{2k+1}}\neq s_n$ if and only if $\Phi_{k}(n)\in\{\frac{f_{2k+3}}{2}, \frac{f_{2k+3}}{2}-1, \frac{f_{2k+3}}{2}+f_{2k}, \frac{f_{2k+3}}{2}+f_{2k}-1\}$. 
        \end{enumerate}
	\end{theorem}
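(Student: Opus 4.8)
The plan is to convert the statement about equality of digits of $\mathbf{s}$ into a purely arithmetic statement about $f$-representations via Proposition \ref{prop:2}. Since $s_m=0$ if and only if $a_0(m)=1$, we have $s_{n+f_{2k}}\neq s_n$ exactly when $a_0(n+f_{2k})\neq a_0(n)$, and likewise $s_{n+f_{2k+1}}\neq s_n$ exactly when $a_0(n+f_{2k+1})\neq a_0(n)$. Thus both parts reduce to deciding precisely when adding $f_{2k}$ (resp.\ $f_{2k+1}$) to $n$ toggles the lowest digit $a_0$ of its $f$-representation.

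First I would make the carry rules of the $f$-numeration system explicit. From \eqref{eq:f-rec} one reads off the \emph{upward} normalizations $f_{2j}+f_{2j+1}=f_{2j+2}$ and $f_{2j}+f_{2j+2}=f_{2j+3}$, which delete the patterns forbidden by \eqref{eq:1}, together with the \emph{downward} identities such as $2f_{2j}=f_{2j-1}+f_{2j+1}$, $f_{2j+1}+f_{2j+2}=f_{2j+3}+f_{2j-2}$ and $2f_{2j+3}=f_{2j}+f_{2j+4}$. These downward identities are precisely the mechanism by which a carry created high up can reintroduce a low-order term. Running the addition algorithm that repeatedly applies these rules starting from the digit at position $2k$ (resp.\ $2k+1$), I would describe how the $f$-representation of $n+f_{2k}$ is obtained from that of $n$ and read off the effect on $a_0$.

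The decisive observation is a localization estimate. Writing $n=\Phi_k(n)+H$ with $H=\sum_{i\ge 2k+3}a_i(n)f_i$, the bound $\Phi_k(n)\le f_{2k+3}-1$ from Proposition \ref{prop:1} together with \eqref{eq:f-rec} gives $\Phi_k(n)+f_{2k}<f_{2k+4}$, so the carry produced by adding $f_{2k}$ to the low part never climbs above position $2k+3$; moreover it can collide with the high part $H$ only when $\Phi_k(n)\ge f_{2k+3}-f_{2k}=f_{2k+2}$. Since the target values $\tfrac{f_{2k+1}}{2}-1$ and $\tfrac{f_{2k+1}}{2}$ are both smaller than $f_{2k+2}$, in the critical range the carry stays inside positions $0,\dots,2k+3$ and $a_0(n+f_{2k})$ depends only on $\Phi_k(n)$. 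I would then carry out the finite but $k$-dependent computation, organized as an induction on $k$ using \eqref{eq:f-rec}, showing that within this range $a_0$ is toggled exactly at $\Phi_k(n)\in\{\tfrac{f_{2k+1}}{2}-1,\tfrac{f_{2k+1}}{2}\}$, and separately that no toggling occurs once $\Phi_k(n)\ge f_{2k+2}$. Part (ii) follows along the same lines, either by a direct carry analysis of adding $f_{2k+1}$ (whose critical values cluster around $\tfrac{f_{2k+3}}{2}$ and $\tfrac{f_{2k+3}}{2}+f_{2k}$), or by writing $f_{2k+1}=f_{2k-2}+f_{2k}$ and combining the instances of part (i) at levels $k-1$ and $k$ as an exclusive-or of two toggling conditions.

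The hard part will be the case $\Phi_k(n)\ge f_{2k+2}$ in the ``no toggling'' claim: there the normalization at position $2k+3$ triggers the downward identity $2f_{2k+3}=f_{2k}+f_{2k+4}$, reinjecting a term at position $2k$ that can in principle cascade all the way down to $a_0$. Controlling this cascade, namely showing that the reinjected low-order contributions always either stop before reaching position $0$ or cancel in pairs, independently of the high part $H$, is the technical heart of the argument and is where the admissibility constraints \eqref{eq:1} must be exploited most carefully.
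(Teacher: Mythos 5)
Your overall strategy---reduce to the lowest digit $a_0$ via Proposition \ref{prop:2}, then analyse how adding $f_{2k}$ propagates through the $f$-numeration system, with an induction on $k$ to absorb downward reinjections---is the paper's strategy (the paper runs a case analysis on the digit window $a_{2k-2}(n)\cdots a_{2k+2}(n)$, which takes only ten admissible values, and for part (ii) on $a_{2k}(n)a_{2k+1}(n)a_{2k+2}(n)$). However, you have misplaced the technical crux. The case you single out as the hard one, $\Phi_k(n)\ge f_{2k+2}$, i.e.\ $a_{2k+2}(n)=1$, is in fact immediate: the admissibility conditions \eqref{eq:1} force $a_{2k}(n)=a_{2k+1}(n)=a_{2k+3}(n)=a_{2k+4}(n)=0$, so $f_{2k}+f_{2k+2}=f_{2k+3}$ slots into an empty position and $a_0$ is untouched. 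The scenario you fear---a doubled $f_{2k+3}$ triggering $2f_{2k+3}=f_{2k}+f_{2k+4}$---cannot occur, precisely because $a_{2k+2}(n)=1$ already excludes $a_{2k+3}(n)=1$.

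The genuinely recursive case, the one that forces the proof to be an induction on $k$ rather than a finite check, is the window $a_{2k-2}(n)\cdots a_{2k+2}(n)=01000$. There the forbidden adjacent pair normalizes as $f_{2k-1}+f_{2k}=f_{2k+1}+f_{2k-4}$, reinjecting $f_{2k-4}$ into the low-order part $n'=\sum_{i\le 2k-2}a_i(n)f_i$; one must then invoke the statement at level $k-2$ for $n'$ and use the identity $\frac{f_{2k+1}}{2}=f_{2k-1}+\frac{f_{2k-3}}{2}$ (a consequence of \eqref{eq:f-rec}) to see that the critical set $\{\frac{f_{2k-3}}{2}-1,\frac{f_{2k-3}}{2}\}$ for $n'$ pulls back exactly to $\{\frac{f_{2k+1}}{2}-1,\frac{f_{2k+1}}{2}\}$ for $\Phi_k(n)$. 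Your plan does list the relevant identity among the ``downward identities'' and announces an induction on $k$, so the ingredients are present; but this reduction, which is where the specific constants $\frac{f_{2k+1}}{2}$ and $\frac{f_{2k+1}}{2}-1$ actually come from, is the step you would need to write out, and as stated the proposal does not explain why the critical set is what it is. Your alternative route to (ii) via $f_{2k+1}=f_{2k-2}+f_{2k}$ and an exclusive-or of two instances of (i) would additionally require tracking $\Phi_k(n+f_{2k-2})$ in terms of $\Phi_k(n)$, i.e.\ another carry analysis of comparable difficulty to the paper's direct three-case treatment of $a_{2k}(n)a_{2k+1}(n)a_{2k+2}(n)\in\{100,010,001\}$.
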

	
    \begin{proof}
    (i) We prove by induction on $k$. When $k=0$, by Proposition \ref{prop:1}, there are only four possible values for $a_0(n)a_1(n)a_2(n)$. By Eq.~\eqref{eq:f-rec}, we have
    \begin{center}
        \begin{tabular}{c|l|l|l|l}
            \hline
            $\Phi_{0}(n)$ & 0 & 1 & 2 & 3\\
            \hline
            $a_0(n)a_1(n)a_2(n)$  & 000 & 100 & 010 & 001 \\
            \hline
            $a_0(n+f_0)$ & 1 & 0 & 0 & 0\\
            \hline
        \end{tabular}.
    \end{center}
    Then we see that $a_0(n)\neq a_0(n+f_0)$ if and only if $\Phi_{0}(n)=0 $ or $1$. The result holds for $k=0$.

    When $k=1$, note that $\Phi_{1}(n)\leq \sum_{i=0}^{4}f_i<f_5=10$. We see 
    \begin{center}\small
        \begin{tabular}{c*{10}{|l}}
            \hline
            $\Phi_{1}(n)$ & 0 & 1 & 2 & 3 & 4 & 5 & 6 & 7 & 8 & 9\\ \hline
            $a_0(n)\dots a_4(n)$ & 00000 & 10000 & 01000 & 00100 & 00010 & 10010 & 01010 & 00001 & 10001 & 01001\\ \hline
            $a_0(n+f_2)$ & 0 & 0 & 1 & 0 & 0 & 1 & 0 & 0 & 1 & 0\\ \hline
        \end{tabular}.
    \end{center}
    Then we have $a_0(n)\neq a_0(n+f_2)$ if and only if $\Phi_{1}(n)=1$ or $2$, that is $\frac{f_3}{2}-1$ or $\frac{f_3}{2}$. The result also holds for $k=1$.
    
    Now assume that the result holds for all $0\leq k<\ell$ with $\ell\ge 2$. We prove it for $k=\ell$. Let $w=a_{2\ell-2}(n)a_{2\ell-1}(n)\dots a_{2\ell +2}(n)$ and $v=a_{2\ell-2}(n+f_{2\ell})a_{2\ell-1}(n+f_{2\ell})\dots a_{2\ell +1}(n+f_{2\ell})$. According to Proposition \ref{prop:1}, $w$ can take only 10 different values.  
    \begin{table}[htbp]
        \caption{Values of $v$.}\label{table:1}
        \begin{tabular}{c*{10}{|l}}
            \hline
            $w$ & 00000 & 10000 & 01000 & 00100 & 00010 & 10010 & 01010 & 00001 & 10001 & 01001\\ \hline
            $v$ & 0010 & 0001 & ? & 0101 & 0000 & 1000 & 0100 & 0000 & 1000 & 0100 \\ \hline
        \end{tabular}
    \end{table}
    While $w\neq 01000$, one can determine $v$ directly using Eq.~\eqref{eq:f-rec}; thus in these cases, $a_{i}(n+f_{2\ell})=a_i(n)$ for all $0\leq i\leq 2\ell -3$; see Table \ref{table:1}.  
    For instance, when $w=10010$, 
    \begin{align*}
        n + f_{2\ell} & = \left(\sum_{i=0}^{2\ell-3}a_i(n)f_i + f_{2\ell-2}+f_{2\ell+1}+\sum_{i=2\ell+3}^{+\infty}a_i(n)f_i\right) +f_{2\ell}\\
        & = \left(\sum_{i=0}^{2\ell-3}a_i(n)f_i + f_{2\ell-2}\right)+\left(f_{2\ell+2}+\sum_{i=2\ell+3}^{+\infty}a_i(n)f_i\right).
    \end{align*}
    Hence one can see that $a_0(n+f_{2\ell})=a_0(n)$.

    When $w=01000$, set $n'=\sum_{i=0}^{2\ell-2}a_i(n)f_i$. Then $a_0(n)=a_0(n')$ and  
    \begin{align*}
            n + f_{2\ell} & = \left(n'+ f_{2\ell-1} + \sum_{i=2\ell +3}^{+\infty}a_i(n)f_i \right)+ f_{2\ell}\\
            & = (n' + f_{2\ell -4}) + f_{2\ell +1} +\sum_{i=2\ell +3}^{+\infty}a_i(n)f_i.  \quad \text{(by Eq.~\ref{eq:f-rec})}
    \end{align*}
    Noticing that $n' + f_{2\ell -4}<f_{2\ell -2}$, we have $a_i(n+f_{2\ell})=a_i(n'+f_{2\ell-4})$ for all $0\leq i\leq 2\ell -2$. In particular, $a_0(n+f_{2\ell})=a_0(n'+f_{2\ell-4})$. By Proposition \ref{prop:2} and the inductive assumption, 
    \begin{align*}
        a_0(n+f_{2\ell})\neq a_0(n) & \iff a_0(n'+f_{2\ell-4})\neq a_0(n')\\
         & \iff n'\in\left\{\frac{f_{2\ell-3}}{2},\frac{f_{2\ell-3}}{2}-1\right\}\\
        & \iff \Phi_{\ell}(n) = n'+ f_{2\ell -1} \in\left\{\frac{f_{2\ell-3}}{2}+ f_{2\ell -1},\frac{f_{2\ell-3}}{2}+ f_{2\ell -1}-1\right\}.
    \end{align*}
    By Eq.~\eqref{eq:f-rec}, we have 
    \begin{align}
        \frac{f_{2\ell+1}}{2} & = (f_{2\ell-2} + f_{2\ell})/2 \nonumber\\
        & = (f_{2\ell-2} + f_{2\ell-2} + f_{2\ell-1})/2\nonumber\\
        & = f_{2\ell-2} + \frac{f_{2\ell-1}}{2}\nonumber\\
        & = f_{2\ell-2} + f_{2\ell-4}  + \frac{f_{2\ell-3}}{2}\nonumber\\
        & = f_{2\ell-1} + \frac{f_{2\ell-3}}{2}. \label{eq:f-2-odd}
    \end{align}
    Then we obtain that 
    \[a_0(n+f_{2\ell})\neq a_0(n) \iff \Phi_{\ell}(n)\in\left\{\frac{f_{2\ell+1}}{2} ,\frac{f_{2\ell+1}}{2} -1\right\}.\]
    It follows from Proposition \ref{prop:2} that the result holds for $k=\ell$. 

    \medskip
    (ii) For any $k\ge 1$, let $u=a_{2k}(n)a_{2k+1}(n)a_{2k+2}(n)$. It follows from  Proposition \ref{prop:2} that $u\in\{100,010,001\}$. The proof is divided into the following three cases.
    \begin{itemize}
        \item When $u=001$, we also have $a_{2k+3}(n)=a_{2k+4}(n)=0$. So
        \begin{align*}  
            n + f_{2k+1} & = \left(\sum_{i=0}^{2k-1}a_i(n)f_i + f_{2k+2} + \sum_{i=2k+5}^{+\infty}a_i(n)f_i\right) + f_{2k+1}\\
            & = \left(\sum_{i=0}^{2k-1}a_i(n)f_i + f_{2k-2}\right) + \left(f_{2k+3}+ \sum_{i=2k+5}^{+\infty}a_i(n)f_i \right).
        \end{align*}    
        Let $n'=\sum_{i=0}^{2k-1}a_i(n)f_i$. Then $a_0(n)=a_0(n')$. Since $n'<f_{2k}$ and $n'+f_{2k-2}<f_{2k+1}$, we have $a_i(n'+f_{2k-2})=a_i(n+f_{2k+1})$ for all $0\leq i\leq 2k$. Thus 
        \begin{align*}
            a_0(n+f_{2k+1})\neq a_0(n) & \iff  a_i(n'+f_{2k-2})\neq a_0(n')\\
            & \iff n'=\sum_{i=0}^{2k}a_i(n')f_i\in \left\{\frac{f_{2k-1}}{2},\frac{f_{2k-1}}{2}-1\right\}
        \end{align*}
        where in the last step we use Theorem \ref{lem3}(i). By Eq.~\eqref{eq:f-rec} and Eq.~\eqref{eq:f-2-odd},
        \[\frac{f_{2k-1}}{2}+f_{2k+2} = \frac{f_{2k-1}}{2}+f_{2k+1}+f_{2k} = \frac{f_{2k+3}}{2}+f_{2k}.\] 
        So when $u=001$, $a_0(n+f_{2k+1})\neq a_0(n)$ if and only if 
        \[\Phi_{k}(n) = n'+f_{2k+2} \in \left\{\frac{f_{2k+3}}{2}+f_{2k}, \frac{f_{2k+3}}{2}+f_{2k}-1\right\}.\]

        \item Suppose $u=010$. Applying Eq.~\eqref{eq:f-rec} twice, we obtain that \[2f_{2k+1}=f_{2k-2}+f_{2k}+f_{2k+1}=f_{2k-2}+f_{2k+2}.\] Then 
        \begin{align*}  
            n + f_{2k+1} & = \left(\sum_{i=0}^{2k-1}a_i(n)f_i + f_{2k+1} + \sum_{i=2k+3}^{+\infty}a_i(n)f_i\right) + f_{2k+1}\\
            & = \left(\sum_{i=0}^{2k-1}a_i(n)f_i + f_{2k-2}\right) + \left(f_{2k+2}+ \sum_{i=2k+3}^{+\infty}a_i(n)f_i \right).
        \end{align*} 
        Let $n'=\sum_{i=0}^{2k-1}a_i(n)f_i$. Using Theorem \ref{lem3}(i), the same argument as in the case $u=001$ leads us to the fact that 
        \begin{align*}
            a_0(n+f_{2k+1})\neq a_0(n) & \iff n'=\sum_{i=0}^{2k}a_i(n')f_i\in \left\{\frac{f_{2k-1}}{2},\frac{f_{2k-1}}{2}-1\right\}\\
            & \iff \Phi_{k}(n) = n'+f_{2k+1} \in \left\{\frac{f_{2k+3}}{2}, \frac{f_{2k+3}}{2}-1\right\}.
        \end{align*}
        \item When $u=100$, we have $a_{2k-2}(n)=a_{2k-1}(n)=0$. Then 
        \begin{align*}  
            n + f_{2k+1} & = \left(\sum_{i=0}^{2k-3}a_i(n)f_i + f_{2k} + \sum_{i=2k+3}^{+\infty}a_i(n)f_i\right) + f_{2k+1}\\
            & = \left(\sum_{i=0}^{2k-3}a_i(n)f_i \right) + \left(f_{2k+2}+ \sum_{i=2k+3}^{+\infty}a_i(n)f_i \right)
        \end{align*} 
        which implies that $a_0(n+f_{2k+1})=a_0(n)$. \qedhere
    \end{itemize}
	\end{proof}
	
\subsection{Integers with the same truncated \texorpdfstring{$f$}{f}-representation}\label{sec:2.3}
To apply Theorem \ref{lem3}, we need to investigate the integers of the same truncated $f$-representations. The following two lemmas (Lemma \ref{lem6} and Lemma \ref{lem7}) serve for this purpose.

For all $k\ge 0$, denote
\[E'_k=\left\{x\in\mathbb{N}\,:\,\Phi_{k}(x)=\frac{f_{2k+3}}{2}\right\}\quad
\text{and}\quad
E''_k= \left\{x\in\mathbb{N}\,:\,\Phi_{k}(x)=\frac{f_{2k+3}}{2}+f_{2k}\right\}.\]
Let $E_k=E'_k\cup E''_k=(x^{(k)}_j)_{j\ge 1}$ 
where $x^{(k)}_1<x^{(k)}_2<x^{(k)}_3<\dots$. The first values of $E_k$ are 
\begin{align*}
	(x^{(k)}_j)_{j\ge 1} = & \left(\frac{f_{2k+3}}{2},\,\frac{f_{2k+3}}{2}+f_{2k},\,\frac{f_{2k+3}}{2}+f_{2k+3},\,\frac{f_{2k+3}}{2}+f_{2k+4},\,\frac{f_{2k+3}}{2}+f_{2k+5},\,\right.\\
	& \qquad\left.\frac{f_{2k+3}}{2}+f_{2k}+f_{2k+5},\,\frac{f_{2k+3}}{2}+f_{2k+3}+f_{2k+5},\,\frac{f_{2k+3}}{2}+f_{2k+6},\,\dots\right).
\end{align*}

\begin{lemma}  \label{lem6}
	Let $k\ge 0$ and $x\in E'_k$ with $x=x^{(k)}_j$ for some $j\ge 2$. Then $x-f_{2k+2}=x^{(k)}_{j-1}\in E_k$.
\end{lemma}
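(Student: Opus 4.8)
The statement asserts that $x-f_{2k+2}$ is the immediate predecessor of $x$ in the ordered set $E_k$, so I would prove two things: (A) $x-f_{2k+2}\in E_k$, and (B) no element of $E_k$ lies in the open interval $(x-f_{2k+2},x)$; together these force $x-f_{2k+2}=x^{(k)}_{j-1}$. The first preparatory step is to read off the digit patterns. Iterating \eqref{eq:f-2-odd} gives $\tfrac{f_{2k+3}}{2}=f_{2k+1}+\tfrac{f_{2k-1}}{2}$, whose top index is $2k+1$; hence every $x\in E'_k$ satisfies $a_{2k+1}(x)=1$ and $a_{2k+2}(x)=0$, while $\tfrac{f_{2k+3}}{2}+f_{2k}=f_{2k+2}+\tfrac{f_{2k-1}}{2}$ shows every $x\in E''_k$ satisfies $a_{2k+2}(x)=1$ and therefore, by \eqref{eq:1}, $a_{2k+3}(x)=a_{2k+4}(x)=0$. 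Writing each $x\in E_k$ as its fixed low part (namely $L':=\tfrac{f_{2k+3}}{2}$ or $L'':=\tfrac{f_{2k+3}}{2}+f_{2k}$) plus a high part $\sum_{i\ge 2k+3}a_i(x)f_i$, I obtain that the high parts of $E'_k$ range over the full set $\mathcal H:=\{h\ge 0:\Phi_k(h)=0\}$ of admissible blocks supported on indices $\ge 2k+3$, whereas those of $E''_k$ range over the subset $\mathcal H''\subset\mathcal H$ of blocks supported on indices $\ge 2k+5$.

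The combinatorial heart is a gap estimate for $\mathcal H$, which I would derive from the elementary maximum-sum identity: for $q\ge b$ with $b\in\{2k+3,2k+5\}$, the largest admissible sum supported on $[b,q]$ equals $f_{q+1}-f_{b-1}$. This is proved by induction on $q$ via \eqref{eq:f-rec}, splitting on whether the top index $q$ is used and on the parity of $q$ (the two cases of \eqref{eq:f-rec} supply exactly the identities $f_{q-1}+f_q=f_{q+1}$ and $f_{q-2}+f_q=f_{q+1}$). Two consequences are all I need. First, no element of $\mathcal H$ lies strictly between $h$ and $h+f_{2k+2}$ for any $h\in\mathcal H$: if $h'\in\mathcal H$ exceeds $h$ and $p$ is the top index where they differ, then $a_p(h')=1$, $a_p(h)=0$, the bits above $p$ agree, and the part of $h$ below index $p$ is an admissible sum on $[2k+3,p-1]$, so $h'-h\ge f_p-(f_p-f_{2k+2})=f_{2k+2}$. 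Second, the same argument with $b=2k+5$ shows that if $a_{2k+3}(h)=a_{2k+4}(h)=0$ then no element of $\mathcal H$ lies strictly between $h$ and $h+f_{2k+3}$.

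For (A) I would set $H_x=\sum_{i\ge 2k+3}a_i(x)f_i$, which is nonzero because $j\ge2$ gives $x>x^{(k)}_1=L'$, and let $i_0$ be its lowest index. If $i_0=2k+3$, then $a_{2k+4}(x)=0$ and, using $f_{2k+3}-f_{2k+2}=f_{2k}$, one gets $x-f_{2k+2}=L''+R$ with $R:=H_x-f_{2k+3}\in\mathcal H''$, so $x-f_{2k+2}\in E''_k$. If $i_0\ge 2k+4$, then $a_{2k+3}(x)=0$ and the borrow is absorbed by the auxiliary fact that $f_{i_0}-f_{2k+2}$ is admissible with support in $[2k+3,i_0-1]$ (a case of the maximum-sum computation); since $L'$ is supported on indices $\le 2k+1$, this block meshes with it and with the untouched higher digits without violating \eqref{eq:1}, giving $x-f_{2k+2}=L'+(H_x-f_{2k+2})\in E'_k$. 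Claim (B) then follows from the two gap facts together with $L''-L'=f_{2k}$ and $f_{2k}+f_{2k+2}=f_{2k+3}$. In the case $i_0\ge 2k+4$, where $x-f_{2k+2}\in E'_k$ with high part $H':=H_x-f_{2k+2}$, an $E'_k$-intruder would force an element of $\mathcal H$ into $(H',H'+f_{2k+2})$ and an $E''_k$-intruder would force one into $(H'-f_{2k},H'+f_{2k+1})$; by the first gap fact the only element of $\mathcal H$ meeting these intervals is $H'$, and $H'\notin\mathcal H''$ because $f_{i_0}-f_{2k+2}$ has a nonzero digit at $2k+3$ or $2k+4$. In the case $i_0=2k+3$, where $x-f_{2k+2}=L''+R$ with $a_{2k+3}(R)=a_{2k+4}(R)=0$, both kinds of intruder would require an element of $\mathcal H$ strictly inside $(R,R+f_{2k+3})$, excluded by the second gap fact.

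The main obstacle I anticipate is the borrow bookkeeping in case $i_0\ge 2k+4$ of (A): subtracting $f_{2k+2}$, which lies strictly below every nonzero digit of $H_x$, forces a rewriting of $f_{i_0}-f_{2k+2}$ as an admissible block and a check that it collides neither with the preserved digit $a_{2k+1}(x)=1$ below it nor with the next nonzero digit of $x$ above it. Establishing the maximum-sum identity cleanly, with the correct parity cases, is the other delicate point, since both gap facts as well as the admissibility and support of $f_{i_0}-f_{2k+2}$ are corollaries of it.
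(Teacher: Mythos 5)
Your proposal is correct, but it takes a genuinely different route from the paper's. The paper proves the lemma by a forward scan: for each $y\in E_k$ it computes $\Phi_k(y+b)$ for every $0<b$ up to the first return to one of the two target values $\frac{f_{2k+3}}{2}$ and $\frac{f_{2k+3}}{2}+f_{2k}$, splitting on whether $y\in E''_k$ or $y\in E'_k$ and, in the latter case, on the digits $a_{2k+3}(y)a_{2k+4}(y)$; this shows directly that the successor of each $y$ is $y+f_{2k}$ or $y+f_{2k+2}$ and that whenever an element of $E'_k$ has a predecessor in $E_k$, that predecessor sits exactly $f_{2k+2}$ below it, which is the lemma read backwards. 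You instead decompose each element of $E_k$ into a fixed low part ($\frac{f_{2k+3}}{2}$ or $\frac{f_{2k+3}}{2}+f_{2k}$) plus an admissible high block supported on indices $\ge 2k+3$, prove the maximum-sum identity $f_{q+1}-f_{b-1}$ and the resulting $f_{2k+2}$- and $f_{2k+3}$-gap estimates for such blocks, and then perform the subtraction $x-f_{2k+2}$ digit by digit, including the borrow when the lowest high digit of $x$ sits strictly above $2k+3$. Both arguments are sound. The paper's is shorter, avoids borrow bookkeeping entirely (adding $b$ only ever produces a single carry out of the truncated window), and yields Remark \ref{re-gap-E} as an immediate byproduct; yours makes the digit structure of $E'_k$ and $E''_k$ and the spacing of the high blocks completely explicit, at the price of the max-sum induction and the admissibility check for the rewritten block $f_{i_0}-f_{2k+2}$ — precisely the two points you single out as delicate, and which do go through as you describe (the block's support lies in $[2k+3,i_0-1]$ with lowest digit at $2k+3$ or $2k+4$, so it clashes neither with $a_{2k+1}(x)=1$ below nor with the next nonzero digit of $x$ at index $\ge i_0+2$ above, and its nonzero digit at $2k+3$ or $2k+4$ is also what certifies $H'\notin\mathcal H''$ in your step (B)).
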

\begin{proof}
	Let $x\in E''_{k}$ with $x=x_{j}^{(k)}$ for some $j\ge 2$. Note that $\frac{f_{2k+3}}{2}+f_{2k}=f_{2k+2}+\frac{f_{2k-1}}{2}$. By Proposition \ref{prop:1},  have $a_{2k+3}(x)=a_{2k+4}(x)=0$. When $0<b<f_{2k}-\frac{f_{2k-1}}{2}$, we see \[\Phi_{k}(x+b)=\left(\frac{f_{2k+3}}{2}+f_{2k}\right)+b<f_{2k+3}\] which implies that $x+b\notin E_k$. When $f_{2k}-\frac{f_{2k-1}}{2}\leq b <f_{2k+2}$, \[\Phi_{k}(x+b)=\left(\frac{f_{2k+3}}{2}+f_{2k}\right)+b - f_{2k+3} <\frac{f_{2k+3}}{2}.\] So $x+b\notin E_k$. Since $\Phi_{k}(x+f_{2k+2})=\frac{f_{2k+3}}{2}$, we have $x+f_{2k+2}=x_{j+1}^{(k)}\in E'_k$.

	Let $x\in E'_{k}$ with $x=x_{j}^{(k)}$ for some $j\ge 1$. According to Proposition \ref{prop:1}, $a_{2k+3}(x)a_{2k+4}(x)=00$, $10$ or $01$, which can be divided into two sub-cases.
	\begin{itemize}
		\item $a_{2k+3}(x)a_{2k+4}(x)=00$. For $0<b\leq f_{2k}$, we see $\Phi_{k}(x+b)=\Phi_{k}(x)+b=\frac{f_{2k+3}}{2}+b$. Thus $x+f_{2k}=x_{j+1}^{(k)}\in E''_k$. 
		\item $a_{2k+3}(x)a_{2k+4}(x)=01$ or $10$. For $0<b< f_{2k}$, we see \[\Phi_{k}(x+b)=\Phi_{k}(x)+b=\frac{f_{2k+3}}{2}+b<\frac{f_{2k+3}}{2}+f_{2k}.\] Thus $x+b\notin E_k$. For $f_{2k}\leq b<f_{2k+2}$, we have \[\Phi_{k}(x+b)=\frac{f_{2k+3}}{2}+b-f_{2k+2}\in \left(\frac{f_{2k-1}}{2},\,\frac{f_{2k+3}}{2}\right)\] which yields that $x+b\notin E_k$. Noting that $\Phi_k(x+f_{2k+2})=\Phi_k(x)=\frac{f_{2k+3}}{2}$, we obtain that $x+f_{2k+2}=x_{j+1}^{(k)}\in E'_k$.
	\end{itemize}  

	From the above argument, we see that if $x=x_{j}^{(k)}\in E'_k$ for some $j\ge 2$, then either $x-f_{2k+2}=x_{j-1}^{(k)}\in E''_k$ or $x-f_{2k+2}=x_{j-1}^{(k)}\in E'_k$ with $a_{2k+3}(x-f_{2k+2})a_{2k+4}(x-f_{2k+2})\neq 00$. The result holds.
\end{proof}

\begin{remark} \label{re-gap-E}
	From the proof of Lemma \ref{lem6}, we see the gaps between two adjacent elements in $E_k$ are $f_{2k}$ and $f_{2k+2}$. That is $x_{j+1}^{(k)}-x_{j}^{(k)}=f_{2k}$ or $f_{2k+2}$ for all $j\ge 1$. Moreover, the gaps between two adjacent elements in $E'_k$ are $f_{2k+2}$ and $f_{2k+3}$. 
\end{remark}

For all $k\ge 0$, let
\[
F_k=\left\{y\in\mathbb{N}\,:\,\Phi_{k}(y)=\frac{f_{2k+1}}{2}\right\}=(y^{(k)}_j)_{j\ge 1}\quad\text{and}\quad
F'_k=\left\{y\in\mathbb{N}\,:\,\Phi_{k+1}(y)=\frac{f_{2k+1}}{2}\right\}
\]
where $y^{(k)}_1<y^{(k)}_2<y^{(k)}_3<\dots$. Write $F''_k=F_k-F'_k$. The first values of $F_k$ are 
\begin{align*}
	(y_{j}^{(k)})_{j\ge 1} = & \left(\frac{f_{2k+1}}{2},\, \frac{f_{2k+1}}{2}+f_{2k+3},\, \frac{f_{2k+1}}{2}+f_{2k+4},\, \frac{f_{2k+1}}{2}+f_{2k+5},\,\right. \\
&\qquad \left.\frac{f_{2k+1}}{2}+f_{2k+5}+f_{2k+3},\,\frac{f_{2k+1}}{2}+f_{2k+6},\,\dots \right).
\end{align*}

\begin{lemma}  \label{lem7}
	For any $y\in F_k$ with $y=y^{(k)}_j$ for some $j\ge 1$, we have 
	\[y^{(k)}_{j+1}=\begin{cases}
		y+f_{2k+3}, & \text{if }y\in F'_{k};\\
		y+f_{2k+2}, & \text{if }y\in F''_{k}.
	\end{cases}\]
\end{lemma}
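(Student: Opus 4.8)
The plan is to pin down, for each $y\in F_k$, the digits $a_i(y)$ with $0\le i\le 2k+4$, and then to read off $y^{(k)}_{j+1}$ by analysing how the low block (the indices $0\le i\le 2k+2$, which is all that $\Phi_{k}$ sees) reacts when $y$ is incremented. First I would record the rigid part of $F_k$: since $\Phi_{k}(y)=\frac{f_{2k+1}}{2}<f_{2k+3}$ and the $f$-representation of a number below $f_{2k+3}$ is unique (Proposition \ref{prop:1}), the digits $a_0(y),\dots,a_{2k+2}(y)$ of every $y\in F_k$ agree with those of $\frac{f_{2k+1}}{2}$; in particular, for $k\ge 1$ one has $\frac{f_{2k+1}}{2}<f_{2k}$ and $a_{2k}(y)=a_{2k+1}(y)=a_{2k+2}(y)=0$, so all the freedom in $y$ lives in the digits of index $\ge 2k+3$. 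Because $\Phi_{k+1}(y)=\Phi_{k}(y)+a_{2k+3}(y)f_{2k+3}+a_{2k+4}(y)f_{2k+4}$, membership splits cleanly: $y\in F'_k$ exactly when $(a_{2k+3}(y),a_{2k+4}(y))=(0,0)$, while $y\in F''_k$ exactly when $(a_{2k+3}(y),a_{2k+4}(y))\in\{(1,0),(0,1)\}$, the pattern $(1,1)$ being forbidden by Proposition \ref{prop:1}. This dichotomy is exactly what will produce the two gap values.

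For $y\in F'_k$ I would show the gap is $f_{2k+3}$. Writing $y=\frac{f_{2k+1}}{2}+P$ with $P=0$ or $P\ge f_{2k+5}$, for any $0<b<f_{2k+3}$ the low value $\frac{f_{2k+1}}{2}+b<f_{2k+4}$ stays below the lowest term of $P$, so adding $b$ does not disturb $P$ and $\Phi_{k}(y+b)=\Phi_{k}(\frac{f_{2k+1}}{2}+b)$. The latter equals $\frac{f_{2k+1}}{2}+b$ or $\frac{f_{2k+1}}{2}+b-f_{2k+3}$ according as the digit of $\frac{f_{2k+1}}{2}+b$ at index $2k+3$ is $0$ or $1$, and in neither case does it return to $\frac{f_{2k+1}}{2}$ while $0<b<f_{2k+3}$; at $b=f_{2k+3}$ one gets $\Phi_{k}(y+f_{2k+3})=\frac{f_{2k+1}}{2}$, so $y^{(k)}_{j+1}=y+f_{2k+3}$.

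For $y\in F''_k$ I would show the gap is $f_{2k+2}$, treating the two admissible patterns together. The key is $\Phi_{k}(y+f_{2k+2})$: the unit added at index $2k+2$ meets the existing high digit and is absorbed upward, through $f_{2k+2}+f_{2k+3}=f_{2k+4}$ when $(a_{2k+3},a_{2k+4})=(1,0)$ and through $f_{2k+2}+f_{2k+4}=f_{2k+5}$ when $(a_{2k+3},a_{2k+4})=(0,1)$; in both cases the low block is left untouched, giving $\Phi_{k}(y+f_{2k+2})=\frac{f_{2k+1}}{2}$. For the intermediate values $0<b<f_{2k+2}$ I would compare $\frac{f_{2k+1}}{2}+b$ with $f_{2k+2}$: if $\frac{f_{2k+1}}{2}+b<f_{2k+2}$ then $\Phi_{k}(y+b)=\frac{f_{2k+1}}{2}+b$, while if $f_{2k+2}\le\frac{f_{2k+1}}{2}+b<f_{2k+3}$ the part $f_{2k+2}$ again combines upward and $\Phi_{k}(y+b)=\frac{f_{2k+1}}{2}+b-f_{2k+2}$; neither equals $\frac{f_{2k+1}}{2}$ for $0<b<f_{2k+2}$. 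Hence $y^{(k)}_{j+1}=y+f_{2k+2}$.

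The step I expect to require the most care is justifying, in the $F''_k$ computation, that the carry triggered at index $2k+3$ or $2k+4$ propagates only to higher indices and never feeds back into the low block $\{0,\dots,2k+2\}$; this is what makes $\Phi_{k}$ insensitive to the possibly cascading rearrangement of $a_{2k+5}(y),a_{2k+6}(y),\dots$. I would isolate this as the single observation that every normalisation move uses one of the identities $f_{2n+2}=f_{2n}+f_{2n+1}$ or $f_{2n+3}=f_{2n}+f_{2n+2}$, each of which replaces two terms by a single term of strictly larger index, so the minimal index can only increase. A secondary nuisance is the boundary case $k=0$, where $\frac{f_{2k+1}}{2}=f_{2k}$ rather than being strictly smaller and the inequality $\frac{f_{2k+1}}{2}+f_{2k+2}<f_{2k+3}$ degenerates; I would dispose of $k=0$ by checking the relevant residues directly and then run the argument above for $k\ge 1$.
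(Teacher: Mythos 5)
Your proposal is correct and follows essentially the same route as the paper: both classify $y\in F_k$ by the digits $a_{2k+3}(y)a_{2k+4}(y)$ (which is exactly the $F'_k$ versus $F''_k$ dichotomy), rule out every intermediate increment $0<b<f_{2k+3}$ resp.\ $0<b<f_{2k+2}$ by computing $\Phi_k(y+b)$, and verify that the claimed gap returns $\Phi_k$ to the value $\frac{f_{2k+1}}{2}$. The one caveat is that your stated normalisation principle omits the combination $f_{2n+1}+f_{2n+2}=f_{2n}+f_{2n+3}$, which \emph{would} lower the minimal index if it ever occurred; to close this you should note that a freshly created odd-index digit at position $2m+1$ only arises by absorbing an existing even digit $a_{2m}(y)=1$, so admissibility of $y$ (namely $a_{2m}a_{2m+2}=0$) forces $a_{2m+2}(y)=0$ and that combination never appears in the cascade.
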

\begin{proof}
	We prove the result by giving the construction of $F_k$. It clear that $y_1^{(k)}=\frac{f_{2k+1}}{2}$. Now suppose  $y=y_{j}^{(k)}\in F_k$ where $j\ge 1$.  
	According to Proposition \ref{prop:1}, we see $a_{2k+3}(y)a_{2k+4}(y)=00$, $ 01$ or $01$.  
	\begin{itemize}
		\item $a_{2k+3}(y)a_{2k+4}(y)=00$, i.e., $y\in F'_{k}$. Note that $\frac{f_{2k+1}}{2}=f_{2k-1}+\frac{f_{2k-3}}{2}$. For $0<b< f_{2k+3}-\frac{f_{2k+1}}{2}$, we have $\Phi_k(y+b)=\frac{f_{2k+1}}{2}+b$, so $y+b\notin F_k$. For $f_{2k+3}-\frac{f_{2k+1}}{2}\leq b < f_{2k+3}$,  \[\Phi_k(y+b)=\frac{f_{2k+1}}{2}+b-f_{2k+3}<\frac{f_{2k+1}}{2},\]
		so $y+b\notin F_{k}$. Since $\Phi_{k}(y+f_{2k+3})=\Phi_{k}(y)$, we obtain that $y+f_{2k+3}=y_{j+1}^{(k)}\in F_{k}-F_{k}'$.
		\item $a_{2k+3}(y)a_{2k+4}(y)= 10$ or $01$. For $0<b<f_{2k+2}-\frac{f_{2k+1}}{2}$, we have $\Phi_k(y+b)=\frac{f_{2k+1}}{2}+b$, so $y+b\notin F_k$. For $f_{2k+2}-\frac{f_{2k+1}}{2}\leq b <f_{2k+2}$, since \[\Phi_k(y+b)=\frac{f_{2k+1}}{2}+b-f_{2k+2}<\frac{f_{2k+1}}{2},\] we also have $y+b\notin F_{k}$. It follows from $\Phi_k(y+f_{2k+2})=\Phi_{k}(y)$ that $y+f_{2k+2}=y_{j+1}^{(k)}\in F_k$.
	\end{itemize}
	The result follows from the above two sub-cases.
\end{proof}
\begin{remark}
	From the proof of Lemma \ref{lem7}, we see the gaps between two adjacent elements in $F_k$ are $f_{2k+2}$ and $f_{2k+3}$. That is $y_{j+1}^{(k)}-y_{j}^{(k)}=f_{2k+2}$ or $f_{2k+3}$ for all $j\ge 1$. Moreover, the gaps between two adjacent elements in $F''_k$ are $f_{2k+2}$ and $f_{2k+4}$. 
\end{remark}

\subsection{Two subsequences of \texorpdfstring{$\mathbf{s}$}{s}}\label{sec:2.4}
The subsequences $(s_{\frac{f_{2k+1}}{2}})_{k\ge 0}$ and $(s_{\frac{f_{2k+1}}{2}-1})_{k\ge 0}$ can be determined according to the parity of $k$; see Lemma \ref{lem5}. We start with an auxiliary lemma which concerns the parity of $\frac{f_{2k+1}}{2}$.
    \begin{lemma} \label{lem4}
        For all $k\ge 0$,
        \begin{enumerate}
            \item[(i)]  $f_{2k}\equiv\begin{cases}
            1, & \text{if }k\equiv 0 \text{ or }3 \pmod 4,\\
            3, & \text{if }k\equiv 1 \text{ or }2 \pmod 4,
        \end{cases}\pmod 4$,
            \item[(ii)] $f_{2k+1}\equiv
        \begin{cases}
            2, &\text{if }k\text{ is even},\\
            0, &\text{if }k\text{ is odd},\\
        \end{cases}\pmod 4$.
        \end{enumerate}
    \end{lemma}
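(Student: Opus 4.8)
The plan is to reduce the recurrence \eqref{eq:f-rec} modulo $4$ and track the pair $(f_{2n},f_{2n+1})\bmod 4$ as the orbit of a single finite-state map. Writing $a_n\equiv f_{2n}$ and $b_n\equiv f_{2n+1}\pmod 4$, the two lines of \eqref{eq:f-rec} give $f_{2n+2}=f_{2n}+f_{2n+1}$ and $f_{2n+3}=f_{2n}+f_{2n+2}$, so modulo $4$ they become
\[
a_{n+1}\equiv a_n+b_n,\qquad b_{n+1}\equiv f_{2n}+f_{2n+2}\equiv a_n+a_{n+1}\equiv 2a_n+b_n \pmod 4 .
\]
The key point is that $(a_{n+1},b_{n+1})$ is determined by $(a_n,b_n)$ alone, so the sequence of pairs is the forward orbit of the fixed map $T(a,b)=(a+b,\,2a+b)$ acting on $(\mathbb{Z}/4\mathbb{Z})^2$.

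Next I will compute this orbit starting from $(a_0,b_0)=(f_0,f_1)=(1,2)$. Applying $T$ repeatedly yields
\[
(1,2)\ \xrightarrow{\ T\ }\ (3,0)\ \xrightarrow{\ T\ }\ (3,2)\ \xrightarrow{\ T\ }\ (1,0)\ \xrightarrow{\ T\ }\ (1,2),
\]
so the orbit returns to its starting point after exactly four steps. Since $T$ is a fixed map and $(a_4,b_4)=(a_0,b_0)$, a one-line induction on $n$ shows $(a_{n+4},b_{n+4})=(a_n,b_n)$ for all $n$. Hence $(f_{2n},f_{2n+1})\bmod 4$ depends only on $n\bmod 4$, taking the four values $(1,2),(3,0),(3,2),(1,0)$ listed above.

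Reading off the first coordinates gives $f_{2k}\equiv 1,3,3,1$ according as $k\equiv 0,1,2,3\pmod 4$, which is precisely (i); reading off the second coordinates gives $f_{2k+1}\equiv 2,0,2,0$, i.e.\ $2$ when $k$ is even and $0$ when $k$ is odd, which is (ii). There is essentially no obstacle in this argument: the only step requiring a little care is expressing $b_{n+1}=f_{2n+3}$ in terms of the current pair via $f_{2n+3}=f_{2n}+f_{2n+2}$ (rather than trying to involve $f_{2n+1}$ directly), after which everything reduces to the finite verification of the four-step cycle plus the trivial periodicity induction.
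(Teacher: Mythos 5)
Your proof is correct and takes essentially the same approach as the paper: both reduce the recurrence \eqref{eq:f-rec} modulo $4$ and argue by induction/periodicity. The only cosmetic difference is packaging --- the paper first derives the second-order relation $f_{2k}\equiv 2+f_{2(k-2)}\pmod 4$ (using that $f_{2n}$ is odd) and then deduces (ii) from (i) via $f_{2k+1}=f_{2k}+f_{2k-2}$, whereas you track the pair $(f_{2n},f_{2n+1})\bmod 4$ as a first-order orbit and verify the $4$-cycle directly.
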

    \begin{proof}
        (i) Note that $f_0=1$ and $f_{2}=3$. Since $f_{2n}$ is odd for all $n\ge 0$, using Eq.~\eqref{eq:f-rec} twice, we have for all $k\ge 2$, 
        \begin{align*}
            f_{2k} & = f_{2k-2} + f_{2k-1} = 2f_{2k-2} + f_{2k-4} \equiv 2 + f_{2(k-2)}\pmod 4.
        \end{align*}
        The result follows by induction on $k$.

        (ii) The initial value is $f_1=2$. Using Eq.~\eqref{eq:f-rec} and the previous result (i), we have for all $k\ge 1$,
        \[f_{2k+1}=f_{2k}+f_{2k-2}\equiv
        \begin{cases}
            2, & k\equiv 0, 2 \pmod 4,\\
            0, & k\equiv 1, 3 \pmod 4,
        \end{cases}\pmod 4\]
        which is the desired result.
    \end{proof}

	In the calculation of $H_{m,n}$, we need to know $s_n$ explicitly for some $n$. The next lemma determines the values of two sub-sequences $\mathbf{s}$.
    \begin{lemma}\label{lem5}
        For all $k\ge 0$, 
        \[s_{\frac{f_{2k+1}}{2}} = 
        \begin{cases}
            1, & \text{ if } k \text{ is odd},\\
            0, & \text{ if } k \text{ is even},
        \end{cases}\quad \text{and}\quad
        s_{\frac{f_{2k+1}}{2}-1} = 
        \begin{cases}
            0, & \text{ if } k \text{ is odd},\\
            1, & \text{ if } k \text{ is even}.
        \end{cases}\]
    \end{lemma}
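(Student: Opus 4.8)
The plan is to reduce both statements to Proposition~\ref{prop:2}, which asserts that $s_n=0$ if and only if $a_0(n)=1$; hence it suffices to pin down the lowest digit $a_0$ (and, in one case, $a_1$) of the $f$-representations of $\frac{f_{2k+1}}{2}$ and of $\frac{f_{2k+1}}{2}-1$. Note first that Lemma~\ref{lem4}(ii) guarantees $f_{2k+1}$ is even, so these quantities are genuine integers. The engine of the argument is the identity \eqref{eq:f-2-odd}, namely $\frac{f_{2k+1}}{2}=f_{2k-1}+\frac{f_{2k-3}}{2}$, which peels off the top summand and recurses with $k$ lowered by $2$, thereby preserving the parity of $k$. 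Accordingly, I would first prove by induction on $k$ the explicit $f$-representation
\[
\frac{f_{2k+1}}{2}=
\begin{cases}
f_0+f_3+f_7+\dots+f_{2k-1}, & k\text{ even},\\[2pt]
f_1+f_5+f_9+\dots+f_{2k-1}, & k\text{ odd},
\end{cases}
\]
with base cases $\frac{f_1}{2}=f_0$ and $\frac{f_3}{2}=f_1$. In the inductive step, $\frac{f_{2k-3}}{2}<f_{2k-3}<f_{2k-1}$, so prepending the summand $f_{2k-1}$ to the representation of $\frac{f_{2k-3}}{2}$ does not disturb any lower digit and simply extends the index list upward by one block of size $4$.

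Next I would verify that the index set exhibited above genuinely satisfies the admissibility conditions \eqref{eq:1}, so that it is the $f$-representation (by uniqueness in Proposition~\ref{prop:1}). In the even case the indices are $0,3,7,11,\dots,2k-1$: the only even index present is $0$, all consecutive gaps are at least $3$, and $a_0a_2=0$ trivially, so both conditions in \eqref{eq:1} hold. In the odd case the indices $1,5,9,\dots,2k-1$ are all $\equiv 1\pmod 4$ with gaps $4$, and no even index occurs, so \eqref{eq:1} holds as well. Reading off the bottom digit, I get $a_0\!\left(\frac{f_{2k+1}}{2}\right)=1$ when $k$ is even and $a_0\!\left(\frac{f_{2k+1}}{2}\right)=0$ (with $a_1=1$) when $k$ is odd; Proposition~\ref{prop:2} then yields the first displayed formula immediately.

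Finally I would determine the $f$-representation of $\frac{f_{2k+1}}{2}-1$ from the one just obtained. When $k$ is even the representation contains the summand $f_0=1$ while $a_1=0$, so subtracting $1$ merely deletes the $f_0$ digit, giving $a_0\!\left(\frac{f_{2k+1}}{2}-1\right)=0$ and hence $s=1$. When $k$ is odd the smallest summand is $f_1=2$ and $a_0=0$; using $f_1-1=f_0$, subtracting $1$ replaces the digit at index $1$ by a digit at index $0$, producing $a_0\!\left(\frac{f_{2k+1}}{2}-1\right)=1$ and hence $s=0$. One must check that this altered representation still obeys \eqref{eq:1}, but since the next summand sits at index $5$, the pattern $a_0=1,\,a_1=a_2=a_3=a_4=0,\,a_5=1,\dots$ is admissible. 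Applying Proposition~\ref{prop:2} once more gives the second formula. The only delicate point—and thus the main obstacle—is the bookkeeping in the odd case, where the borrow $f_1-1=f_0$ must be carried out without violating \eqref{eq:1}; everything else is a routine induction powered by \eqref{eq:f-2-odd}.
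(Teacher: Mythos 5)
Your proposal is correct and follows essentially the same route as the paper: both derive the explicit $f$-representation $\frac{f_{2k+1}}{2}=f_0+f_3+\dots+f_{2k-1}$ ($k$ even) or $f_1+f_5+\dots+f_{2k-1}$ ($k$ odd), read off $a_0$ (and handle the subtraction of $1$ by a trivial digit adjustment), and conclude via Proposition~\ref{prop:2}. The only difference is organizational — you induct with step $2$ using \eqref{eq:f-2-odd} where the paper telescopes the sum $\sum_{i=0}^{k-1}f_{2i}+\frac{f_1}{2}$ and pairs terms — which amounts to the same computation.
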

    \begin{proof}
        By Eq.~\eqref{eq:f-rec}, we obtain that for all $k\ge 0$,
    \begin{align}
        \frac{f_{2k+1}}{2} & = (f_{2k-2} + f_{2k})/2 \nonumber\\
        & = (f_{2k-2} + f_{2k-2} + f_{2k-1})/2\nonumber\\
        & = f_{2k-2} + \frac{f_{2k-1}}{2} = \cdots = \sum_{i=0}^{k-1}f_{2i} + \frac{f_1}{2}.\label{eq:lem:odd}
    \end{align}
    When $k$ is odd, 
    \begin{align}
        \frac{f_{2k+1}}{2} & = (f_{2k-2}+f_{2k-4}) + (f_{2k-6}+f_{2k-8})+\dots+(f_{4}+f_{2}) + f_0 + \frac{f_1}{2}\notag\\
        & = f_{2k-1} + f_{2k-5} +\dots +f_5 + f_1\quad \text{(by Eq.~\eqref{eq:f-rec})}\notag\\
        & = \sum_{i=0}^{\frac{k-1}{2}}f_{4i+1}.\label{eq:f-odd}
    \end{align}
    When $k\ge 2$ is even,
    \begin{align}
        \frac{f_{2k+1}}{2} & = (f_{2k-2}+f_{2k-4}) + (f_{2k-6}+f_{2k-8})+\dots+(f_{2}+f_{0}) + \frac{f_1}{2}\nonumber\\
        & = f_{2k-1} + f_{2k-5} +\dots +f_3 + \frac{f_1}{2}\quad \text{(by Eq.~\eqref{eq:f-rec})}\nonumber\\
        & = \sum_{i=0}^{\frac{k-2}{2}}f_{4i+3}+ \frac{f_1}{2}
          = \sum_{i=0}^{\frac{k-2}{2}}f_{4i+3} + f_0.\label{eq:f-even}
    \end{align}
    It follows from \eqref{eq:f-odd} and \eqref{eq:f-even} that for all $k\geq 0$,
    \begin{equation*}
        a_0\left(\frac{f_{2k+1}}{2}\right) = 
        \begin{cases}
            0, & \text{ if } k \text{ is odd},\\
            1, & \text{ if } k \text{ is even},
        \end{cases}
    \end{equation*}
    and 
    \begin{equation*}
        a_0\left(\frac{f_{2k+1}}{2}-1\right) = 
        \begin{cases}
            1, & \text{ if } k \text{ is odd},\\
            0, & \text{ if } k \text{ is even}.
        \end{cases}
    \end{equation*}
    Then by Proposition \ref{prop:2}, the result follows.
    \end{proof}

\section{Partition of the lattice}\label{sec:3}
According to the values of the Hankel determinants of $\mathbf{s}$, we tile the integer lattice using the following parallelograms. Given a $k\ge 0$, write the elements in $E'_{k+1}$, $F''_{k}$ and $E'_k$ in ascending order as follows: 
\[E'_{k+1}=(\alpha_i)_{i\ge 1},\quad F''_{k}=(\beta'_{i})_{i\ge 1},\quad E'_{k}=(\gamma_{i})_{i\ge 1}.\]
Moreover, let $\beta_i=\beta'_{i}+f_{2k}$ for all $i\ge 1$. 
We define three different types of parallelograms: for $i\ge 1$, 
\begin{align*}
	U_{k,i} & = \left\{(m,n)\in\mathbb{N}^2\,:\,\, f_{2k}\leq n<f_{2k+3},\, \alpha_{i}-f_{2k+2}<n+m\leq \alpha_i\right\},\\
	V_{k,i} & = \left\{(m,n)\in\mathbb{N}^2\,:\,\, f_{2k}\leq n<f_{2k+2},\, \beta_{i}<n+m\leq \beta_{i}+f_{2k+1}\right\},\\
	T_{k,i} & = \left\{(m,n)\in\mathbb{N}^2\,:\,\, f_{2k+1}\leq n<f_{2k+2},\, \gamma_{i}-f_{2k}<n+m\leq \gamma_{i}\right\};
\end{align*}
see Figure \ref{fig:intro}. 
Let $U_k=\cup_{i\ge 1}U_{k,i}$, $V_{k}=\cup_{i\ge 1}V_{k,i}$ and $T_{k}=\cup_{i\ge 1}T_{k,i}$.

\begin{proposition} \label{lem20}
	The parallelograms $\{U_{k,i}\}$, $\{V_{k,i}\}$, and $\{T_{k,i}\}$ introduce a partition of pairs of positive integers. Namely, $\mathbb{N}\times\mathbb{N}_{\ge 1} = \bigsqcup_{k\geq 0} (U_{k}\sqcup V_{k}\sqcup T_{k})$ where $\sqcup$ denotes the disjoint union.
\end{proposition}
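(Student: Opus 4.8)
The plan is to establish the partition one row at a time. Fix an integer $n\ge 1$ and note that a point $(m,n)$ belongs to a given parallelogram if and only if its anti-diagonal value $d:=n+m$ lies in the associated half-open interval for $d$ while $n$ satisfies the row constraint. Hence it suffices to prove that, as $d$ runs through $\{n,n+1,n+2,\dots\}$ (equivalently as $m=d-n$ runs through $\mathbb{N}$), each value $d$ lies in exactly one such interval coming from a parallelogram whose row range contains $n$. To locate the relevant parallelograms, let $\kappa\ge 0$ be the unique index with $f_{2\kappa}\le n<f_{2\kappa+2}$; it exists and is unique because the bands $[f_{2\kappa},f_{2\kappa+2})$ partition $\mathbb{N}_{\ge1}$ by \eqref{eq:f-rec}. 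Comparing $n$ with the three row constraints shows that the only families meeting row $n$ are $U_{\kappa,\cdot}$ and $V_{\kappa,\cdot}$, which are always active, together with exactly one of $U_{\kappa-1,\cdot}$ (when $f_{2\kappa}\le n<f_{2\kappa+1}$) and $T_{\kappa,\cdot}$ (when $f_{2\kappa+1}\le n<f_{2\kappa+2}$).

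The observation that unifies these two cases is that the $U_{\kappa-1,i}$-interval $(\alpha_i-f_{2\kappa},\alpha_i]$, with $\alpha_i\in E'_{(\kappa-1)+1}=E'_\kappa$, and the $T_{\kappa,i}$-interval $(\gamma_i-f_{2\kappa},\gamma_i]$, with $\gamma_i\in E'_\kappa$, describe the very same collection of $d$-intervals, namely $\{(x-f_{2\kappa},x]:x\in E'_\kappa\}$, each of width $f_{2\kappa}$; only their row labels differ. Thus in both cases the $d$-intervals to be arranged are these ``short'' intervals, together with the $U_\kappa$-intervals $(\alpha-f_{2\kappa+2},\alpha]$ for $\alpha\in E'_{\kappa+1}$ (width $f_{2\kappa+2}$) and the $V_\kappa$-intervals $(\beta'+f_{2\kappa},\beta'+f_{2\kappa+2}]$ for $\beta'\in F''_\kappa$ (width $f_{2\kappa+1}$, using $\beta_i=\beta'_i+f_{2\kappa}$ and $f_{2\kappa+2}=f_{2\kappa}+f_{2\kappa+1}$). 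It therefore suffices to prove once and for all that this fixed collection tiles a terminal segment of $\mathbb{Z}$ containing $[n,\infty)$.

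I would carry this out by sweeping through $E'_\kappa=(x_j)_{j\ge1}$ in increasing order. By Lemma \ref{lem6} and Remark \ref{re-gap-E} consecutive elements differ by $f_{2\kappa+2}$ or $f_{2\kappa+3}$; refining that argument, I would show that the gap after $x$ is $f_{2\kappa+3}$ exactly when $a_{2\kappa+3}(x)=a_{2\kappa+4}(x)=0$ and is $f_{2\kappa+2}$ otherwise. The short interval ending at $x$ and the short interval starting at the next element $x'$ leave a window $(x,x'-f_{2\kappa}]$ whose width $x'-x-f_{2\kappa}$ equals $f_{2\kappa+2}$ or $f_{2\kappa+1}$ by \eqref{eq:f-rec}. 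Each window is to be filled by exactly one medium interval: when the gap is $f_{2\kappa+3}$ the window $(x,x+f_{2\kappa+2}]$ is a $U_\kappa$-interval with $\alpha=x+f_{2\kappa+2}$, and when the gap is $f_{2\kappa+2}$ the window $(x,x+f_{2\kappa+1}]$ is a $V_\kappa$-interval with $\beta'=x-f_{2\kappa}$. Because the intervals are half-open and the right endpoint of each medium interval is precisely the left endpoint of the next short interval, this produces an exact tiling with no overlaps and no gaps.

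The heart of the proof, and the step I expect to be hardest, is to justify this matching, i.e.\ to prove the two bijections $x\mapsto x+f_{2\kappa+2}$ from $\{x\in E'_\kappa:a_{2\kappa+3}(x)=a_{2\kappa+4}(x)=0\}$ onto $E'_{\kappa+1}$ and $x\mapsto x-f_{2\kappa}$ from $\{x\in E'_\kappa:(a_{2\kappa+3}(x),a_{2\kappa+4}(x))\neq(0,0)\}$ onto $F''_\kappa$, together with the refined gap criterion. These are proved by the same $f$-representation bookkeeping used for Lemmas \ref{lem6} and \ref{lem7}, the delicate point being the carries triggered by adding $f_{2\kappa+2}$; here the identity \eqref{eq:f-2-odd}, the reformulation $E'_{\kappa+1}=\{z:\Phi_\kappa(z)=\tfrac{f_{2\kappa+1}}{2},\ a_{2\kappa+3}(z)=1,\ a_{2\kappa+4}(z)=0\}$, and $F''_\kappa=\{z\in F_\kappa:(a_{2\kappa+3}(z),a_{2\kappa+4}(z))\neq(0,0)\}$ keep the computation under control. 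Granting these, the tiling of the $d$-line holds for every fixed $n$, and disjointness together with covering of $\mathbb{N}\times\mathbb{N}_{\ge1}$ follows at once by reassembling the rows. It remains to treat the boundary cases directly: that the left endpoint $\tfrac{f_{2\kappa+3}}{2}-f_{2\kappa}$ of the smallest short interval lies below $f_{2\kappa}$ for $\kappa\ge 1$, so that each row is covered starting exactly at $d=n$, and the smallest values of $\kappa$ and $n$, where one of the three families may be absent; these I would settle by explicit computation.
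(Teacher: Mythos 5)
Your proposal is essentially the paper's own argument, reorganized. The paper likewise fixes the band $f_{2k}\le n<f_{2k+2}$, cuts the anti-diagonal axis at the points $\gamma_\ell$ of $E'_k$, assigns the terminal segment $(\gamma_\ell-f_{2k},\gamma_\ell]$ to $U_{k-1}$ or $T_k$ according to the half-band containing $n$, and fills the remaining window with a $V_k$- or $U_k$-interval according to whether $\gamma_\ell-\gamma_{\ell-1}$ equals $f_{2k+2}$ or $f_{2k+3}$ --- which it does by proving exactly your two membership facts, $\gamma_{\ell-1}-f_{2k}\in F''_k$ in the first case and $\gamma_\ell-f_{2k}\in E'_{k+1}$ in the second (by contradiction rather than via your digit criterion $a_{2\kappa+3}=a_{2\kappa+4}=0$, but these are equivalent). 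Your version is marginally stronger in that you phrase these facts as bijections onto $F''_\kappa$ and $E'_{\kappa+1}$ and hence address disjointness, which the paper's proof leaves implicit.

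One caution about the boundary case you defer to ``explicit computation.'' For $\kappa=0$ the lower half-band is the single row $n=1$, and the family $U_{-1}$ that should supply the short intervals $\{(x-f_0,x]:x\in E'_0\}$ does not exist. Computing directly, in row $n=1$ the active intervals are $(2,5],(12,15],(19,22],\dots$ from $U_0$ (with $E'_1=(5,15,22,\dots)$) and $(6,8],(9,11],(16,18],\dots$ from $V_0$ (with $F''_0=(5,8,15,\dots)$), so every $d\in\{1\}\cup E'_0=\{1,2,6,9,12,16,\dots\}$ is uncovered; e.g.\ $(1,1)$ and $(5,1)$ lie in no parallelogram. So this case cannot be settled affirmatively by computation: the asserted covering genuinely fails on the row $n=1$. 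The paper's proof has the same defect (its clause ``$(m,n)\in U_{k-1}$ if $f_{2k}\le n<f_{2k+1}$'' is vacuous for $k=0$), so this is a flaw in the statement or its conventions rather than in your strategy; but your write-up should exhibit this failure or supply the missing convention instead of asserting the check will go through.
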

\begin{proof}
	Let $m\ge 0$ and $n\ge 1$ be two integers. Since $(f_k)_{k\ge 0}$ and $(\gamma_{k})_{k\ge 1}$ are two increasing unbounded non-negative integer sequences, there exist $k\ge 0$ and $\ell\ge 1$ such that $f_{2k}\le n <f_{2k+2}$ and $\gamma_{\ell-1}< n+m\leq \gamma_{\ell}$ where $\gamma_0:=0$. The result clearly holds when $\ell=1$. Now we assume that $\ell\ge 2$. From the proof of Lemma \ref{lem6} we see that $\gamma_{\ell}-\gamma_{\ell-1}=f_{2k+2}$ or $f_{2k+3}$ for all $\ell\ge 2$. When $\gamma_{\ell}-f_{2k}<n+m\leq \gamma_{\ell}$, we have 
	\[(m,n)\in\begin{cases}
		U_{k-1}, & \text{ if }f_{2k}\leq n<f_{2k+1};\\
		T_{k}, & \text{ if }f_{2k+1}\leq n<f_{2k+2};
	\end{cases}\] 
	see also Figure \ref{fig:partition}. 
	\begin{figure}[htbp]
		\centering
		\includegraphics[width=\textwidth, trim={0 7cm 0 7cm},clip]{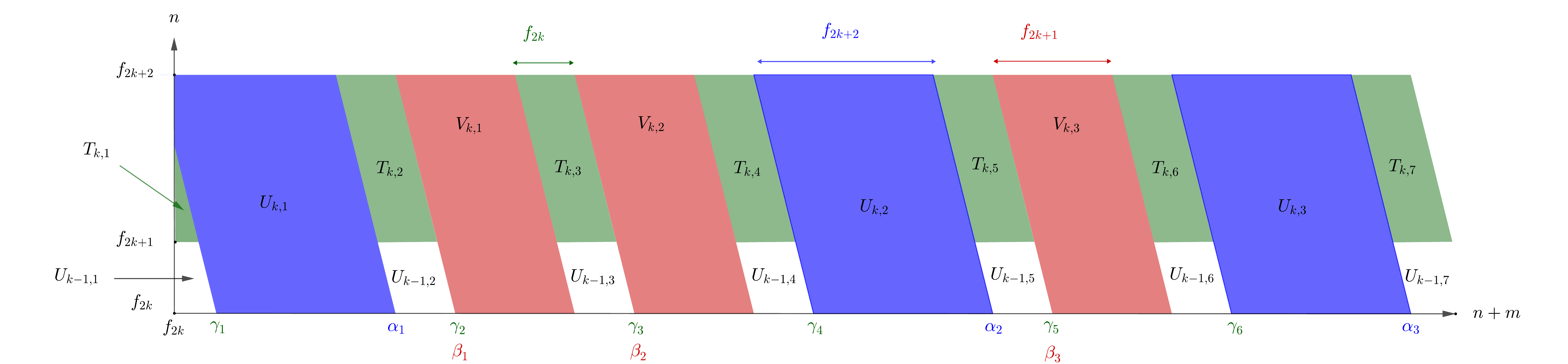}
		\caption{Partition of the strip $[0,+\infty)\times [f_{2k},f_{2k+2})$}
		\label{fig:partition}
	\end{figure}
	When $\gamma_{\ell-1}<n+m\leq \gamma_{\ell}-f_{2k}$, we have the following two cases:

	Case 1: $\gamma_{\ell}-\gamma_{\ell-1}=f_{2k+2}$. In this case, we shall verify that $(m,n)\in V_k$. To do this, we only need to show that $\gamma_{\ell-1}-f_{2k}\in F''_{k}$. Since $\gamma_{\ell-1}\in E'_{k}$, we have $\Phi_{k}(\gamma_{\ell-1})=\frac{f_{2k+3}}{2}$ and $\Phi_{k}(\gamma_{\ell-1}-f_{2k})=\frac{f_{2k+3}}{2}-f_{2k}=\frac{f_{2k+1}}{2}$. So $(\gamma_{\ell-1}-f_{2k})\in F_{k}$. Suppose on the contrary that $(\gamma_{\ell-1}-f_{2k})\in F'_{k}$. Then $\Phi_{k+1}(\gamma_{\ell-1}-f_{2k})=\frac{f_{2k+1}}{2}$ and $\Phi_{k+1}(\gamma_{\ell-1})=\frac{f_{2k+3}}{2}$. This implies $\Phi_{k}(\gamma_{\ell-1}+f_{2k+2})=\frac{f_{2k+1}}{2}$ and $(\gamma_{\ell-1}+f_{2k+2})\notin E'_k$. Note that in this case $\gamma_{\ell}=\gamma_{\ell-1}+f_{2k+2}$. We conclude that $\gamma_{\ell}\notin E'_{k}$ which is a contradiction. Hence, $(\gamma_{\ell-1}-f_{2k})\in F''_{k}$. The result follows.

	Case 2: $\gamma_{\ell}-\gamma_{\ell-1}=f_{2k+3}$. We assert that, in this case, $\gamma_{\ell-1}-f_{2k}\in F'_{k}$. Since $\gamma_{\ell-1}\in E'_{k}$, we have $\Phi_{k}(\gamma_{\ell-1})=\frac{f_{2k+3}}{2}$. Consequently, $\Phi_{k}(\gamma_{\ell-1}-f_{2k})=\frac{f_{2k+1}}{2}$ and $(\gamma_{\ell-1}-f_{2k})\in F_{k}$. Suppose $(\gamma_{\ell-1}-f_{2k})\in F''_{k}$. Then $\Phi_{k}(\gamma_{\ell-1})=\frac{f_{2k+3}}{2}$ and $\Phi_{k+1}(\gamma_{\ell-1})\neq \frac{f_{2k+3}}{2}$.  It follows that $\Phi_{k}(\gamma_{\ell-1}+f_{2k+3})=\frac{f_{2k+3}}{2}+f_{2k}$. Since $\gamma_{\ell-1}+f_{2k+3}=\gamma_{\ell}$, we obtain that $\gamma_{\ell}\notin E'_{k}$ which is a contradiction. Now we have $\gamma_{\ell-1}-f_{2k}\in F'_{k}$. This yields that $\Phi_{k+1}(\gamma_{\ell-1})=\frac{f_{2k+3}}{2}$. Observing that $\Phi_{k+1}(\gamma_{\ell}-f_{2k})=\Phi_{k+1}(\gamma_{\ell-1}+f_{2k+2})=\frac{f_{2k+5}}{2}$, we see $\gamma_{\ell}-f_{2k}\in E'_{k+1}$. So $(m,n)\in U_k$.
\end{proof}

\section{Relations of Hankel determinants}\label{sec:4}

In this section, we use the Theorem \ref{lem3} to show the determinant value inside $U_k$, $V_k$, $T_k$ is 0. For some integer $k\ge 0$, we prove the relationship between the determinant value of the boundary of $U_k$, $V_k$, $T_k$. We assert that as long as we know one value of $U_k$($V_k$ or $T_k$), we can know all its values.

\subsection{Inside the parallelograms}
The Hankel determinant $H_{m,n}$ vanishes if $(m,n)$ is not on the boundary of any parallelogram $U_{k,i}$, $V_{k,i}$ or $T_{k,i}$.
 \begin{lemma} \label{lem8}
	Let $m\ge 1$ and $n\ge 0$ be two integer.
	\begin{enumerate}
		\item[(i)] If $(m,n)$ is inside $V_{k,i}$ for some $k\ge 0$ and $i\ge 1$, i.e., 
		\[\begin{cases}
			f_{2k}+1\leq n < f_{2k+2}-1,\\
			\beta_i + 1 < n+m \leq \beta_i + f_{2k+1} -1,
		\end{cases}\] then $H_{m,n}=0$.
		\item[(ii)] If $(m,n)$ is inside $T_{k,i}$ for some $k\ge 0$ and $i\ge 1$, i.e., 
		\[\begin{cases}
			f_{2k+1}+1\leq n < f_{2k+2}-1,\\
			\gamma_i - f_{2k} + 1 < n+m \leq \gamma_i -1,
		\end{cases}\] then $H_{m,n}=0$.
		\item[(iii)] If $(m,n)$ is inside $U_{k,i}$ for some $k\ge 0$ and $i\ge 1$, i.e., 
		\[\begin{cases}
			f_{2k}+1\leq n < f_{2k+3}-1,\\
			\alpha_i - f_{2k+2} + 1 < n+m \leq \alpha_i -1,
		\end{cases}\] then $H_{m,n}=0$.
	\end{enumerate}
\end{lemma}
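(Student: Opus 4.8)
\section*{Proof proposal for Lemma \ref{lem8}}

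The plan is to show, for each of the three cases, that the symmetric Hankel matrix $M_{m,n}$ has two identical columns, which forces $H_{m,n}=0$. Columns $j_1<j_2$ of $M_{m,n}$ coincide precisely when $s_p=s_{p+g}$ for every $p$ in the window $[m+j_1,\,m+j_1+n-1]$, where $g=j_2-j_1$. By Theorem \ref{lem3}, the \emph{mismatch positions} (those $p$ with $s_p\neq s_{p+g}$) for a shift $g=f_{2k}$ are exactly the $p$ with $\Phi_k(p)\in\{\tfrac{f_{2k+1}}{2},\tfrac{f_{2k+1}}{2}-1\}$, and for $g=f_{2k+1}$ the $p$ with $\Phi_k(p)\in\{\tfrac{f_{2k+3}}{2},\tfrac{f_{2k+3}}{2}-1,\tfrac{f_{2k+3}}{2}+f_{2k},\tfrac{f_{2k+3}}{2}+f_{2k}-1\}$. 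Using Proposition \ref{prop:2} (which pins down the lowest digit of $\tfrac{f_{2k+1}}{2}$) one checks that in the first set the mismatch positions are exactly the pairs $\{y-1,y\}$ with $y\in F_k$, and in the second the pairs $\{x-1,x\}$ with $x\in E_k$. Consequently a mismatch-free run is an interval between two consecutive elements of $F_k$ (resp.\ $E_k$) shortened by two; by Remark \ref{re-gap-E} and the remark following Lemma \ref{lem7}, such a run has length in $\{f_{2k+2}-2,\,f_{2k+3}-2\}$ (resp.\ $\{f_{2k}-2,\,f_{2k+2}-2\}$). The whole argument then reduces to fitting a length-$n$ window into one such run while keeping both column indices in $[0,n-1]$.

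I would treat the cases $V_{k,i}$ and $T_{k,i}$ first, as each needs a single shift. For $V_{k,i}$ take $g=f_{2k}$; the interior bound $n\ge f_{2k}+1$ guarantees $g\le n-1$. Since $\beta_i'=\beta_i-f_{2k}\in F_k''\subseteq F_k$ and, by Lemma \ref{lem7}, its successor in $F_k$ is $\beta_i'+f_{2k+2}=\beta_i+f_{2k+1}$, the two mismatch pairs bounding the relevant run sit at $\beta_i'$ and $\beta_i+f_{2k+1}$, so the mismatch-free run is $[\beta_i-f_{2k}+1,\,\beta_i+f_{2k+1}-2]$ of length $f_{2k+2}-2\ge n$. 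Choosing $j_1$ to place the window inside this run, the two feasibility inequalities $n\le f_{2k+2}-2$ and $m+n\ge\beta_i+2$ are exactly the strict interior conditions of $V_{k,i}$; hence a valid $j_1$ exists and columns $j_1,j_1+f_{2k}$ are equal. For $T_{k,i}$ take $g=f_{2k+1}$ (valid since $n\ge f_{2k+1}+1$). Here $\gamma_i\in E_k'$; using $\tfrac{f_{2k+3}}{2}=f_{2k}+\tfrac{f_{2k+1}}{2}$ one sees $\gamma_i-f_{2k}\in F_k$ is \emph{not} in $E_k$, so by the gap structure of $E_k$ the $E_k$-element just below $\gamma_i$ must lie at distance $f_{2k+2}$, at $\gamma_i-f_{2k+2}$. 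The mismatch-free run is then $[\gamma_i-f_{2k+2}+1,\,\gamma_i-2]$ of length $f_{2k+2}-2\ge n$, and the feasibility conditions collapse to $n\le f_{2k+2}-2$, $m+n\le\gamma_i-1$ and $m+n\ge\gamma_i-f_{2k}+2$, all supplied by the interior of $T_{k,i}$.

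The case $U_{k,i}$ is where I expect the real difficulty, and I would split the index range $f_{2k}\le n<f_{2k+3}$ into the three subintervals $[f_{2k},f_{2k+1})$, $[f_{2k+1},f_{2k+2})$, $[f_{2k+2},f_{2k+3})$, using the shifts $f_{2k}$, $f_{2k+1}$, $f_{2k+2}$ respectively so that in each subinterval $g\le n-1$ holds. The new ingredient is that the boundary data $\alpha_i\in E_{k+1}'$ must be projected down to the mismatch sets at the relevant level: from $\Phi_{k+1}(\alpha_i)=\tfrac{f_{2k+5}}{2}$ and the identity \eqref{eq:f-2-odd} in the form $\tfrac{f_{2k+5}}{2}=f_{2k+3}+\tfrac{f_{2k+1}}{2}$ one gets $a_{2k+3}(\alpha_i)=1$ and $\Phi_k(\alpha_i)=\tfrac{f_{2k+1}}{2}$, so $\alpha_i\in F_k$; a parallel computation locates $\alpha_i-f_{2k+2}$, $\alpha_i$ relative to $E_k$ and to $F_{k+1}$. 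The main obstacle is the bookkeeping here: for each subinterval one must identify which mismatch point bounds the run, verify that the corresponding gap is the \emph{large} one (so the run has length $\ge n$), and check that the resulting window inequalities follow from the strict interior bounds $f_{2k}+1\le n<f_{2k+3}-1$ and $\alpha_i-f_{2k+2}+1<m+n\le\alpha_i-1$. I expect this to require invoking Lemma \ref{lem6} and Lemma \ref{lew7 if needed} (the gap structure of $E_k'$ and $E_{k+1}'$) at two levels simultaneously, whereas the $V$ and $T$ cases only use one. Once the correct bounding mismatch is identified in each subrange, the feasibility check is routine and mirrors the computations above.
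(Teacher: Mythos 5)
Your parts (i) and (ii) are correct and are in substance the paper's own argument: for $V_{k,i}$ one pairs two rows (equivalently, columns) at distance $f_{2k}$ placed inside the mismatch-free run $[\beta_i'+1,\beta_i'+f_{2k+2}-2]$ between the consecutive $F_k$-elements $\beta_i'$ and $\beta_i'+f_{2k+2}$, and for $T_{k,i}$ two rows at distance $f_{2k+1}$ inside $[\gamma_i-f_{2k+2}+1,\gamma_i-2]$; your feasibility inequalities are exactly the checks the paper performs.

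Part (iii) contains a genuine gap, and it is not just bookkeeping. The shifts $f_{2k+1}$ and $f_{2k+2}$ you assign to the ranges $n\in[f_{2k+1},f_{2k+2})$ and $n\in[f_{2k+2},f_{2k+3})$ cannot produce two equal rows of $M_{m,n}$ for interior points of $U_{k,i}$. For $g=f_{2k+1}$ the mismatch pairs sit at $E_k\cup(E_k-1)$, and one computes $\Phi_k(\alpha_i-f_{2k+2})=\tfrac{f_{2k+5}}{2}-f_{2k+2}+ \bigl(f_{2k+2}-f_{2k+3}\bigr)+f_{2k+3}-f_{2k+2}$ \dots more simply, $\Phi_k(\alpha_i-f_{2k+2})=\tfrac{f_{2k+3}}{2}$ with $a_{2k+3}=a_{2k+4}=0$, so by Lemma \ref{lem6} \emph{both} $\alpha_i-f_{2k+2}$ and $\alpha_i-f_{2k+2}+f_{2k}=\alpha_i-f_{2k+1}$ lie in $E_k$. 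Any admissible window start satisfies $p\le m+n-1-f_{2k+1}\le\alpha_i-f_{2k+1}-2$, while $p\ge m$ forces $p+n-1\ge m+n-1\ge\alpha_i-f_{2k+2}+1$; if the window stops below $\alpha_i-f_{2k+1}-1$ it must then contain the pair $\{\alpha_i-f_{2k+2}-1,\alpha_i-f_{2k+2}\}$ (since $2f_{2k+1}>f_{2k+2}$), and otherwise it contains $\alpha_i-f_{2k+1}-1$. Either way a mismatch is unavoidable. For $g=f_{2k+2}$ the same point kills you: $\alpha_i-f_{2k+2}\in F_{k+1}$ because $\tfrac{f_{2k+5}}{2}-f_{2k+2}=\tfrac{f_{2k+3}}{2}$, and the constraints $p\le m+n-1-f_{2k+2}\le\alpha_i-f_{2k+2}-2$ and $p+n-1\ge\alpha_i-f_{2k+2}+1$ force the window to contain $\{\alpha_i-f_{2k+2}-1,\alpha_i-f_{2k+2}\}$.

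The repair is that no case split is needed: the shift $g=f_{2k}$ works on \emph{all} of the interior of $U_{k,i}$. Since $\Phi_{k+1}(\alpha_i)=\tfrac{f_{2k+5}}{2}=f_{2k+3}+\tfrac{f_{2k+1}}{2}$ (Eq.~\eqref{eq:f-2-odd}), one has $\Phi_k(\alpha_i)=\tfrac{f_{2k+1}}{2}$, hence $\alpha_i\in F_k''$, while $\alpha_i-f_{2k+3}\in F_k'$ has successor $\alpha_i$ in $F_k$ by Lemma \ref{lem7}; so the mismatch-free run for the shift $f_{2k}$ is $[\alpha_i-f_{2k+3}+1,\alpha_i-2]$, of length $f_{2k+3}-2\ge n$. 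Fitting $[p,p+n-1]$ into this run with $p\ge m$ and $p+f_{2k}\le m+n-1$ requires exactly $n\ge f_{2k}+1$, $n\le f_{2k+3}-2$, $m+n\le\alpha_i-1$ and $m+n\ge\alpha_i-f_{2k+2}+2$ (using $f_{2k+3}-f_{2k}=f_{2k+2}$), i.e.\ precisely the four strict interior conditions. For comparison, the paper's own proof of (iii) pairs rows at distance $f_{2k+3}$, which likewise cannot both occur among the $n$ rows of $M_{m,n}$ when $n<f_{2k+3}$; so your instinct that (iii) is the delicate case is sound, but the shift that closes it is the \emph{smallest} one, $f_{2k}$, not the larger ones.
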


\begin{proof}
	Let $A_{m+i}$ be the $i$-th row of $H_{m,n}$. Then 
	\begin{align*}
	H_{m,n}&=\det \begin{pmatrix}
	s_m & s_{m+1}& \cdots & s_{m+n-1}\\
	s_{m+1} & s_{m+2} &\iddots & \vdots \\
	\vdots & \iddots & \iddots  &\vdots \\
	s_{m+n-1}& \cdots & \cdots & s_{m+2n-2}   
	\end{pmatrix}
	=\det \begin{pmatrix}
	A_m\\
	A_{m+1}\\
	\vdots\\
	A_{m+n-1}
	\end{pmatrix}.
	\end{align*}

	(i) When $m\leq \beta'_i+1$, recall that $\beta'_i=\beta_i-f_{2k}\in F''_{k}$. Since $n\leq f_{2k+2}-2$, by Lemma \ref{lem7}, we have $\Phi_{k}(\beta'_k + j)\neq \frac{f_{2k+1}}{2}$ or $\frac{f_{2k+1}}{2}-1$ for all $1\leq j\leq n$. Then it follows from Theorem \ref{lem3}(i) that 
	\begin{align*}
	A_{\beta'_i+1}&=(s_{\beta'_i+1},\,s_{\beta'_i+2},\,\dots,\, s_{\beta'_i+n})\\
	&=(s_{\beta_i+1},\,s_{\beta_i+2},\,\dots,\, s_{\beta_i+n}) 
	=A_{\beta_i+1},
	\end{align*}
	which gives $H_{m,n}=0$. 
	When $m>\beta'_i+1$, note that $n+m\leq \beta'_k + f_{2k+2}-1$. By Lemma \ref{lem7}, we have $\Phi_{k}(m + j)\neq \frac{f_{2k+1}}{2}$ or $\frac{f_{2k+1}}{2}-1$ for all $1\leq j\leq n$. Then it follows from Theorem \ref{lem3}(i) that 
	\begin{align*}
		A_{m}&=(s_{m},\,s_{m+1},\,\dots,\, s_{m+n-1})\\
		&=(s_{m+f_{2k}},\,s_{m+f_{2k}+1},\,\dots,\, s_{m+f_{2k}+n-1})
		=A_{m+f_{2k}}.
	\end{align*}
	So $H_{m,n}=0$.
	
	(ii) Recall that $\gamma_i\in E'_k$ and by Lemma \ref{lem6}, $\gamma_i$ and $\gamma_i-f_{2k+2}$ are adjacent elements in $E_k$. Let \[r=\begin{cases}
		\gamma_i - f_{2k+2} + 1 - m, & \text{ if } m\le  \gamma_i - f_{2k+2} + 1,\\
		0, & \text{ if } m > \gamma_i - f_{2k+2} + 1.
	\end{cases}\]
	Combining Lemma \ref{lem6} and Theorem \ref{lem3}(ii), we have $A_{m+r}=A_{m+r+f_{2k+1}}$ which means $H_{m,n}=0$.

	(iii) Recall that $\alpha_i\in E'_{k+1}$ and by Lemma \ref{lem6}, $\alpha_i$ and $\alpha_i-f_{2k+4}$ are adjacent elements in $E_{k+1}$. When $m\le  \alpha_i - f_{2k+4} + 1$, note that $\alpha_i-f_{2k+4}+n < \alpha_i - f_{2k+2} - 1$. By Theorem \ref{lem3}(ii), we have 
	\begin{align*}
		A_{\alpha_i-f_{2k+4}+1} & = (s_{\alpha_i-f_{2k+4}+1},\,s_{\alpha_i-f_{2k+4}+2},\,\dots,\,s_{\alpha_i-f_{2k+4}+n})\\
		& = (s_{\alpha_i-f_{2k+2}+1},\,s_{\alpha_i-f_{2k+2}+2},\,\dots,\,s_{\alpha_i-f_{2k+2}+n})
		= A_{\alpha_i-f_{2k+2}+1}.
	\end{align*}
	Thus $H_{m,n}=0$. When $m > \alpha_i - f_{2k+4} + 1$, since $n+m-1\leq \alpha_i-2$, by Theorem \ref{lem3}(ii), we obtain that  
	\begin{align*}
		A_{m} & = (s_{m},\,s_{m+1},\,\dots,\,s_{m+n-1})\\
		& = (s_{m+f_{2k+3}},\,s_{m+f_{2k+3}+1},\,\dots,\,s_{m+f_{2k+3}+n-1})
		= A_{m+f_{2k+3}}
	\end{align*}
	which also implies $H_{m,n}=0$.
\end{proof}

\subsection{Determinants on the horizontal edges of the parallelograms}
We first deal with the Hankel determinants $H_{m,n}$ on the horizontal edges with $n=f_{2k}$ and $f_{2k+1}$ where $k\ge 0$. 
\begin{lemma} \label{lem9}
	Let $k\ge 0$ and $i\ge 1$. 
	\begin{enumerate}
		\item[(i)] \emph{(Bottom edge of $V_{k,i}$)} $H_{\beta'_i+r,\, f_{2k}} = H_{\beta'_i+1,\, f_{2k}}$ for all $1\leq r\leq f_{2k+1}$.
		\item[(ii)] \emph{(Bottom edge of $U_{k,i}$)} $H_{\alpha_i-f_{2k+3}+r,\, f_{2k}} = H_{\alpha_i-f_{2k},\, f_{2k}}$ for all $1\leq r\leq f_{2k+2}$.
		\item[(iii)] \emph{(Bottom edge of $T_{k,i}$)} $H_{\gamma_{i}-f_{2k+2}+r,\, f_{2k+1}}=(-1)^{r+1}H_{\gamma_{i}-f_{2k+1},\, f_{2k+1}}$ for all $1\leq r \leq f_{2k}$ with $\gamma_{i}-f_{2k+2}+r\ge 0$.
	\end{enumerate}
\end{lemma}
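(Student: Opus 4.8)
The plan is to prove Lemma \ref{lem9} by exhibiting, in each case, an explicit relation between consecutive Hankel matrices (as one slides the starting index $m$ along the bottom edge) that is realized by elementary row and column operations preserving the determinant up to a controlled sign. The key observation driving all three parts is Theorem \ref{lem3}: whether $s_{m+f_{2\ell}}=s_m$ or $s_{m+f_{2\ell+1}}=s_m$ is governed entirely by the truncated $f$-representation $\Phi_\ell(m)$, and along the bottom edge of a fixed parallelogram the relevant value $n+m$ ranges over a short interval in which, by Lemmas \ref{lem6} and \ref{lem7}, the forbidden values of $\Phi_\ell$ (i.e. those making two entries differ) occur at most once. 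So the rows of $M_{m,n}$ and $M_{m+1,n}$ agree except in very few controlled positions.

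\textbf{Part (i) and (ii).} For the bottom edges with $n=f_{2k}$, I would compare $M_{m+1,\,f_{2k}}$ with $M_{m,\,f_{2k}}$. Since $n=f_{2k}$, the entry $s_{m+i+j}$ in one matrix relates to $s_{m+i+j+f_{2k}}$ in the shifted matrix via Theorem \ref{lem3}(i), and the range of $n+m$ on the bottom edge of $V_{k,i}$ (namely $\beta_i < n+m \le \beta_i+f_{2k+1}$, i.e. $\beta'_i < m \le \beta'_i + f_{2k+1} - f_{2k}$) together with the gap structure of $F_k$ from Lemma \ref{lem7} ensures that $\Phi_k(m+j)\notin\{\tfrac{f_{2k+1}}{2},\tfrac{f_{2k+1}}{2}-1\}$ for all indices $m+j$ appearing in the matrix. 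Hence $A_{m+r}=A_{m+r+f_{2k}}$ for the relevant rows, so the last row of $M_{m+1,f_{2k}}$ coincides with an already-present row pattern and the determinants coincide (no sign change, consistent with the factor $(-1)^{r+1}$ being absent). The same bookkeeping with the $E_k$-gaps of Lemma \ref{lem6} handles (ii), where the edge length is $f_{2k+2}$ and the controlling width is again $f_{2k}$.

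\textbf{Part (iii).} This is where a genuine sign appears, and I expect it to be the main obstacle. Here $n=f_{2k+1}$ and we move along an interval of length $f_{2k}$, so the relevant comparison uses Theorem \ref{lem3}(ii) with shift $f_{2k+1}$. The factor $(-1)^{r+1}$ signals that incrementing $m$ by one is realized not by a plain row-equality but by a cyclic shift or a single transposition of rows/columns in the Hankel matrix, each such elementary step contributing a factor $-1$. I would make this precise by showing that $M_{\gamma_i-f_{2k+1}+1,\,f_{2k+1}}$ is obtained from $M_{\gamma_i-f_{2k+1},\,f_{2k+1}}$ by moving one row to the opposite end (a cyclic permutation of the $f_{2k+1}$ rows, whose sign is $(-1)^{f_{2k+1}-1}=-1$ since $f_{2k+1}$ is even by Lemma \ref{lem4}), using that along this edge exactly one of the forbidden $\Phi_k$-values from Theorem \ref{lem3}(ii) is attained, which forces precisely one row to change by the $f_{2k+1}$-shift while all others are fixed.

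Iterating this elementary relation $r-1$ times gives the claimed $(-1)^{r+1}$, and the constraint $\gamma_i - f_{2k+2}+r \ge 0$ is exactly the condition keeping all starting indices non-negative. The careful part throughout is to track, for each value of $r$, which single entry (equivalently which anti-diagonal of the Hankel matrix) is affected by the shift and to confirm that the interval endpoints supplied by the $E_k$- and $F_k$-gap lemmas leave no room for two simultaneous discrepancies; once that is established the determinant identities follow from standard multilinearity and the sign of the relevant permutation.
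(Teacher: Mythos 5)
Your proposal follows essentially the same route as the paper: along each bottom edge the departing row of $M_{m,n}$ equals the arriving row of $M_{m+1,n}$ (via Theorem \ref{lem3} combined with the gap structure of $E_k$ and $F_k$ from Lemmas \ref{lem6} and \ref{lem7}), so consecutive determinants differ by the sign $(-1)^{n-1}$ of a cyclic permutation of the rows, which is $+1$ for $n=f_{2k}$ (odd) and $-1$ for $n=f_{2k+1}$ (even) by Lemma \ref{lem4} --- exactly the paper's argument. One correction to your part (iii): the cyclic-permutation identity requires $A_{g+r}=A_{g+r+f_{2k+1}}$ where $g=\gamma_i-f_{2k+2}$, i.e.\ \emph{none} of the forbidden values of $\Phi_k$ from Theorem \ref{lem3}(ii) may occur among the $f_{2k+1}$ indices $g+r,\dots,g+r+f_{2k+1}-1$, not ``exactly one''; this holds because $g$ and $\gamma_i$ are adjacent in $E_k$ (Lemma \ref{lem6}), so all these indices lie strictly between $g$ and $\gamma_i-1$. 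If exactly one forbidden value were attained, the two rows would differ in a single entry and the permutation argument would not apply, so the condition to verify is ``zero discrepancies in the window,'' not ``at most one.''
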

\begin{proof}
(i) Let $A_j = (s_{\beta'_i+j},\,s_{\beta'_i+j+1},\,\dots,\, s_{\beta'_i+j+f_{2k}-1})$. Then for $1\leq j < f_{2k+1}$, \[H_{\beta'_i+j,\,f_{2k}}=\det\begin{pmatrix}
	A_j\\ A_{j+1}\\ \vdots\\ A_{f_{2k}+j-1}
\end{pmatrix}\quad\text{and}\quad
H_{\beta'_i+j+1,\,f_{2k}}=\det\begin{pmatrix}
	A_{j+1}\\ A_{j+2}\\ \vdots\\ A_{f_{2k}+j}
\end{pmatrix}.\]
Recall that $\beta'_i\in F''_k$. By Lemma \ref{lem7}, since $j+f_{2k}-1\leq f_{2k+2}-2$, we see $\Phi_k(\beta'_i+\ell)\neq \frac{f_{2k+1}}{2}$ or $\frac{f_{2k+1}}{2}-1$ for all $1\leq \ell \leq f_{2k+2}-2$. Applying Theorem \ref{lem3}(i), we have 
\begin{align*}
	A_j & = (s_{\beta'_i+j},\,s_{\beta'_i+j+1},\,\dots,\, s_{\beta'_i+j+f_{2k}-1})\\
	& = (s_{\beta'_i+j+f_{2k}},\,s_{\beta'_i+j+1+f_{2k}},\,\dots,\, s_{\beta'_i+j+2f_{2k}-1})
	= A_{f_{2k}+j}.
\end{align*}
Therefore, for $1\leq j < f_{2k+1}$,
\[H_{\beta'_i+j,\,f_{2k}}=\begin{vmatrix}
	A_j\\ A_{j+1}\\ \vdots\\ A_{f_{2k}+j-1}
\end{vmatrix}=
\begin{vmatrix}
	A_{f_{2k}+j}\\ A_{j+1}\\ \vdots\\ A_{f_{2k}+j-1}
\end{vmatrix} = (-1)^{f_{2k}-1}
\begin{vmatrix}
	A_{j+1}\\ A_{j+2}\\ \vdots\\ A_{f_{2k}+j}
\end{vmatrix}=H_{\beta'_i+j+1,\,f_{2k}}\]
where the last equality follows from Lemma \ref{lem4}(i).

(ii) Recall that $\alpha_i\in E'_{k+1}$ and $\Phi_{k+1}(\alpha_i)=\frac{f_{2k+5}}{2}$. Let $y=\alpha_i-f_{2k+3}$. Then $\Phi_{k+1}(y)= \frac{f_{2k+1}}{2}$ and $y\in F'_k$. 
Let $B_j = (s_{y+j},\,s_{y+j+1},\,\dots,\, s_{y+j+f_{2k}-1})$. Then for $1\leq j < f_{2k+2}$, \[H_{y+j,\,f_{2k}}=\det\begin{pmatrix}
	B_j\\ B_{j+1}\\ \vdots\\ B_{f_{2k}+j-1}
\end{pmatrix}\quad\text{and}\quad
H_{y+j+1,\,f_{2k}}=\det\begin{pmatrix}
	B_{j+1}\\ B_{j+2}\\ \vdots\\ B_{f_{2k}+j}
\end{pmatrix}.\]
Since $j+f_{2k}-1\leq f_{2k+3}-2$, by Lemma \ref{lem7} and Theorem \ref{lem3}(i),  
\begin{align*}
	B_j & = (s_{y+j},\,s_{y+j+1},\,\dots,\, s_{y+j+f_{2k}-1})\\
	& = (s_{y+j+f_{2k}},\,s_{y+j+1+f_{2k}},\,\dots,\, s_{y+j+2f_{2k}-1})
	= B_{f_{2k}+j}.
\end{align*}
Therefore, for $1\leq j < f_{2k+2}$,
\[H_{y+j,\,f_{2k}}= (-1)^{f_{2k}-1}H_{y+j+1,\,f_{2k}}=H_{y+j+1,\,f_{2k}}\]
where the last equality follows from Lemma \ref{lem4}(i).

(iii) Recall that $\gamma_i\in E'_{k}$. By Lemma \ref{lem6}, $g:=\gamma_i-f_{2k+2}\in E_{k}$. Write \[A_{g+j}=(s_{g+j},\,s_{g+j+1},\,\dots,\,s_{g+j+f_{2k+1}-1}).\]
For $1\leq r < f_{2k}$, 
\[H_{g+r,\,f_{2k+1}} = \begin{vmatrix}
	A_{g+r}\\ A_{g+r+1}\\ \vdots\\ A_{g+r+f_{2k+1}-1}
\end{vmatrix}\quad\text{and}\quad
H_{g+r+1,\,f_{2k+1}} = \begin{vmatrix}
	A_{g+r+1}\\ A_{g+r+2}\\ \vdots\\ A_{g+r+f_{2k+1}}
\end{vmatrix}.\]
By Theorem \ref{lem3}, $A_{g+r}=A_{g+r+f_{2k+1}}$. Then using Lemma \ref{lem4}, for all $1\leq r < f_{2k}$, \[H_{g+r,\,f_{2k+1}}=(-1)^{f_{2k+1}-1}H_{g+r+1,\,f_{2k+1}}=-H_{g+r+1,\,f_{2k+1}}\]
and $H_{g+r,\,f_{2k+1}}=(-1)^{f_{2k}-r}H_{g+r+f_{2k},\,f_{2k+1}}=(-1)^{1+r}H_{g+r+f_{2k},\,f_{2k+1}}.$
\end{proof}

In fact, for all $i\ge 1$, the Hankel determinants on the bottom of $U_{k,i}$ and $V_{k,i}$ take the same value which depends only on $k$. The following lemma helps us to connect the determinants on the bottom of $U_{k,*}$ and $V_{k,*}$.
\begin{lemma}\label{lem-2k}
	Let $k\ge 0$ and $i\ge 1$. 
	If $\gamma_{i+1}-\gamma_i=f_{2k+3}$, then $H_{\gamma_i+f_{2k}+1,\,f_{2k}}=H_{\gamma_{i+1}-f_{2k}+1,\,f_{2k}}$. If $\gamma_{i+1}-\gamma_i=f_{2k+2}$, then $H_{\gamma_i+1,\,f_{2k}}=H_{\gamma_{i+1}-f_{2k}+1,\,f_{2k}}$.
\end{lemma}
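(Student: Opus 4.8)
The goal is to prove each equality by showing that the two Hankel matrices are in fact \emph{identical}. Write $m_1$ for the smaller base index, so $m_1=\gamma_i+f_{2k}+1$ when $\gamma_{i+1}-\gamma_i=f_{2k+3}$ and $m_1=\gamma_i+1$ when $\gamma_{i+1}-\gamma_i=f_{2k+2}$. Using $f_{2k+2}=f_{2k}+f_{2k+1}$ and $f_{2k+3}=f_{2k}+f_{2k+2}$, the larger base index equals $m_1+f_{2k+1}$ in both cases, since $(\gamma_{i+1}-f_{2k}+1)-(\gamma_i+f_{2k}+1)=f_{2k+3}-2f_{2k}=f_{2k+1}$ and $(\gamma_{i+1}-f_{2k}+1)-(\gamma_i+1)=f_{2k+2}-f_{2k}=f_{2k+1}$. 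Hence it suffices to prove $H_{m_1,f_{2k}}=H_{m_1+f_{2k+1},f_{2k}}$. The $(p,q)$-entry of $M_{m_1+f_{2k+1},f_{2k}}$ is $s_{(m_1+p+q)+f_{2k+1}}$, so the two matrices coincide, and therefore have equal determinants, as soon as $s_j=s_{j+f_{2k+1}}$ for every $j$ in the window $W=[m_1,\,m_1+2f_{2k}-2]$.

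By Theorem \ref{lem3}(ii) the offending indices are exactly the \emph{breakpoints} $j$ with $\Phi_k(j)\in B:=\{\tfrac{f_{2k+3}}{2}-1,\ \tfrac{f_{2k+3}}{2},\ \tfrac{f_{2k+3}}{2}+f_{2k}-1,\ \tfrac{f_{2k+3}}{2}+f_{2k}\}$, so I must check that $W$ has no breakpoint. The base index has been placed just above a distinguished lattice point of $E_k$: by the gap analysis of Lemma \ref{lem6} and Remark \ref{re-gap-E}, the hypothesis $\gamma_{i+1}-\gamma_i=f_{2k+3}$ forces $\gamma_i+f_{2k}\in E''_k$, whereas $\gamma_{i+1}-\gamma_i=f_{2k+2}$ gives $\gamma_i\in E'_k$ with no element of $E''_k$ strictly between $\gamma_i$ and $\gamma_{i+1}$. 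Setting $z=\gamma_i+f_{2k}$ in the first case and $z=\gamma_i$ in the second, we have $m_1=z+1$, so $W=[z+1,\,z+2f_{2k}-1]$ and $\Phi_k(z)\in\{\tfrac{f_{2k+3}}{2},\ \tfrac{f_{2k+3}}{2}+f_{2k}\}$; in particular $z$ is a breakpoint. It therefore suffices to show that $z$ is the \emph{only} breakpoint in $[z,\,z+2f_{2k}-1]$.

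The heart of the matter is to follow $\Phi_k(z+b)$ for $0\le b\le 2f_{2k}-1$, just as in the proof of Lemma \ref{lem6}. As long as the digits of $z+b$ below position $2k+3$ represent a value smaller than $f_{2k+2}$ (when $\Phi_k(z)=\tfrac{f_{2k+3}}{2}$) resp.\ smaller than $f_{2k+3}$ (when $\Phi_k(z)=\tfrac{f_{2k+3}}{2}+f_{2k}$), no carry crosses position $2k+2$ and $\Phi_k(z+b)=\Phi_k(z)+b$. Because $\tfrac{f_{2k+3}}{2}+\tfrac{f_{2k+1}}{2}=f_{2k+2}$ and $f_{2k+2}+f_{2k}=f_{2k+3}$, this threshold is reached at $b=\tfrac{f_{2k+1}}{2}$ in both cases, where a single carry past position $2k+2$ occurs; for $b\ge\tfrac{f_{2k+1}}{2}$ one finds $\Phi_k(z+b)=b-\tfrac{f_{2k+1}}{2}$. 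Thus as $b$ runs from $0$ to $2f_{2k}-1$ the value $\Phi_k(z+b)$ increases strictly from $\Phi_k(z)$ up to $f_{2k+3}-1$ and then restarts at $0$, staying at most $2f_{2k}-1-\tfrac{f_{2k+1}}{2}$ in the second phase. Using $\tfrac{f_{2k+1}}{2}<f_{2k}$ one checks directly that the only value taken that lies in $B$ is the initial value $\Phi_k(z)$: the first phase is increasing and the elements of $B$ other than $\Phi_k(z)$ fall on the wrong side of $[\Phi_k(z),f_{2k+3}-1]$, while the second phase remains below $\tfrac{f_{2k+3}}{2}-1=f_{2k}+\tfrac{f_{2k+1}}{2}-1$. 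Hence the only breakpoint is $b=0$, i.e.\ $z$ itself, which $W$ omits, and the claim follows.

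I expect the carry computation of the last paragraph to be the crux. The naive bound $\Phi_k(z+b)=\Phi_k(z)+b$ is valid only for $b<\tfrac{f_{2k+1}}{2}$; pushed up to $b=f_{2k}-1$ it would falsely predict $\Phi_k(z+f_{2k}-1)=\tfrac{f_{2k+3}}{2}+f_{2k}-1\in B$, an apparent breakpoint lying inside $W$. It is precisely the carry at position $2k+2$ that annihilates this value and makes the two matrices genuinely identical, so getting this step right, rather than using the looser estimate that appears inside the proof of Lemma \ref{lem6}, is the delicate point. Finally, the degenerate case $k=0$ (where $f_{2k}=1$, each $H_{m,1}=s_m$, and the shift is $f_1$) is settled by the same computation of $\Phi_0$, comparing the single entries $s_{m_1}$ and $s_{m_1+f_1}$ directly.
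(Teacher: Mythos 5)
Your argument is correct and follows essentially the same route as the paper's: both reduce the claim to the entrywise identity $s_j=s_{j+f_{2k+1}}$ on the window of length $2f_{2k}-1$ starting just above an element of $E_k$, and then use Theorem \ref{lem3}(ii) together with the structure of $E_k$ from Lemma \ref{lem6} to rule out breakpoints there (the paper citing the gap bound $3f_{2k}<f_{2k+3}$ where you track $\Phi_k(z+b)$ explicitly through the carry). One phrase needs a small repair: when $\Phi_k(z)=\tfrac{f_{2k+3}}{2}$ the first phase climbs only to $f_{2k+2}-1$, and the elements $\tfrac{f_{2k+3}}{2}+f_{2k}-1$ and $\tfrac{f_{2k+3}}{2}+f_{2k}$ of $B$ \emph{do} lie in $[\Phi_k(z),\,f_{2k+3}-1]$ --- they are avoided because $f_{2k+2}-1<\tfrac{f_{2k+3}}{2}+f_{2k}-1$ (equivalently $f_{2k+2}<3f_{2k}$, valid for $k\ge 1$), not because they fall outside that interval.
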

\begin{proof}
	Suppose $\gamma_{i+1}-\gamma_i=f_{2k+3}$.  Then $\Phi_{k}(\gamma_{i}+f_{2k})=\frac{f_{2k+3}}{2}+f_{2k}$. Since $3f_{2k}=f_{2k+2}+f_{2k-1}<f_{2k+3}$, by Theorem \ref{lem3}(ii), we have 
	\begin{align*}
		\begin{pmatrix}
			s_{\gamma_i+f_{2k}+1} & \cdots & s_{\gamma_i+3f_{2k}-1}
		\end{pmatrix} & = \begin{pmatrix}
			s_{\gamma_i+f_{2k}+1+f_{2k+1}} & \cdots & s_{\gamma_i+3f_{2k}-1+f_{2k+1}}\end{pmatrix}\\
			& = \begin{pmatrix}
				s_{\gamma_{i+1}-f_{2k}+1} & \cdots & s_{\gamma_{i+1}+f_{2k}-1}
			\end{pmatrix}.
	\end{align*}
	Therefore 
	\begin{align*}
		H_{\gamma_i+f_{2k}+1,\,f_{2k}}& =\begin{vmatrix}
			s_{\gamma_i+f_{2k}+1} & \cdots & s_{\gamma_i+2f_{2k}}\\
			\vdots & \vdots & \vdots\\
			s_{\gamma_i+2f_{2k}} & \cdots & s_{\gamma_i+3f_{2k}-1}	
		\end{vmatrix}  =\begin{vmatrix}
			s_{\gamma_{i+1}-f_{2k}+1} & \cdots & s_{\gamma_{i+1}}\\
			\vdots & \vdots & \vdots\\
			s_{\gamma_{i+1}} & \cdots & s_{\gamma_{i+1}+f_{2k}-1}	
		\end{vmatrix}\\
		& = H_{\gamma_{i+1}-f_{2k}+1,\,f_{2k}}.
	\end{align*}

	When $\gamma_{i+1}-\gamma_i=f_{2k+2}$, we have $\Phi_{k}(\gamma_i)=\frac{f_{2k+3}}{2}$. By Theorem \ref{lem3}(ii), 
	\begin{align*}
		\begin{pmatrix}
			s_{\gamma_i+1} & \cdots & s_{\gamma_i+2f_{2k}-1}
		\end{pmatrix} & = \begin{pmatrix}
			s_{\gamma_i+1+f_{2k+1}} & \cdots & s_{\gamma_i+2f_{2k}-1+f_{2k+1}}\end{pmatrix}\\
			& = \begin{pmatrix}
				s_{\gamma_{i+1}-f_{2k}+1} & \cdots & s_{\gamma_{i+1}+f_{2k}-1}
			\end{pmatrix}.
	\end{align*}
	So $H_{\gamma_i+1,\,f_{2k}}=H_{\gamma_{i+1}-f_{2k}+1,\,f_{2k}}$.
\end{proof}

Next we give the connection between $T_{k,i}$ and $T_{k,i+1}$.
\begin{lemma} \label{lem-2k+1}
	For all $i\ge 1$,  
	$H_{\gamma_i-f_{2k+1},\,f_{2k+1}}=H_{\gamma_{i+1}-f_{2k+1},\,f_{2k+1}}$.
\end{lemma}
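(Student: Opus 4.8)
The plan is to prove that the two Hankel matrices $M_{\gamma_i-f_{2k+1},\,f_{2k+1}}$ and $M_{\gamma_{i+1}-f_{2k+1},\,f_{2k+1}}$ coincide as matrices, whence their determinants are equal. A Hankel matrix $M_{m,f_{2k+1}}$ is completely determined by its antidiagonal sequence $(s_{m+\ell})_{0\le\ell\le 2f_{2k+1}-2}$, since its $(p,q)$ entry is $s_{m+p+q}$. Writing $d:=\gamma_{i+1}-\gamma_i$, the second matrix is obtained from the first by shifting every index by $d$, so it suffices to prove
\[
s_x=s_{x+d}\qquad\text{for all } x\in W:=\{\gamma_i-f_{2k+1},\,\gamma_i-f_{2k+1}+1,\,\dots,\,\gamma_i+f_{2k+1}-2\}.
\]
This is the same device, one level higher, that drives Lemma~\ref{lem-2k}.

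First I would identify $d$. Since $\gamma_i,\gamma_{i+1}$ are consecutive elements of $E'_k$, Lemma~\ref{lem6} and Remark~\ref{re-gap-E} give $d\in\{f_{2k+2},f_{2k+3}\}$, and the proof of Lemma~\ref{lem6} shows precisely that $d=f_{2k+3}$ when $a_{2k+3}(\gamma_i)a_{2k+4}(\gamma_i)=00$, while $d=f_{2k+2}$ when $a_{2k+3}(\gamma_i)a_{2k+4}(\gamma_i)\in\{01,10\}$. Because $\gamma_i\in E'_k$ forces $\Phi_k(\gamma_i)=\tfrac{f_{2k+3}}{2}$, the digits $a_0(\gamma_i),\dots,a_{2k+2}(\gamma_i)$ are exactly those of $\tfrac{f_{2k+3}}{2}$. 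I would then apply Theorem~\ref{lem3} at level $k+1$: part~(i) governs the shift $f_{2k+2}=f_{2(k+1)}$ and part~(ii) the shift $f_{2k+3}=f_{2(k+1)+1}$. Concretely, $s_x=s_{x+d}$ holds as long as $\Phi_{k+1}(x)$ avoids the corresponding critical set, namely $\{\tfrac{f_{2k+3}}{2},\tfrac{f_{2k+3}}{2}-1\}$ when $d=f_{2k+2}$, and $\{\tfrac{f_{2k+5}}{2},\tfrac{f_{2k+5}}{2}-1,\tfrac{f_{2k+5}}{2}+f_{2k+2},\tfrac{f_{2k+5}}{2}+f_{2k+2}-1\}$ when $d=f_{2k+3}$.

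The heart of the proof, and the step I expect to be the main obstacle, is controlling $\Phi_{k+1}(x)$ across the whole window $W$. Writing $x=\gamma_i+t$ with $-f_{2k+1}\le t\le f_{2k+1}-2$, the inequalities $f_{2k+1}\le\tfrac{f_{2k+3}}{2}$ and $f_{2k+3}-f_{2k+1}=2f_{2k}$ (both immediate from \eqref{eq:f-rec}) guarantee that the digits of $x$ of index at most $2k+2$ represent $\tfrac{f_{2k+3}}{2}+t\in[0,f_{2k+3})$, so in the generic situation $\Phi_{k+1}(x)=\tfrac{f_{2k+3}}{2}+t+c$, where $c=a_{2k+3}(\gamma_i)f_{2k+3}+a_{2k+4}(\gamma_i)f_{2k+4}$ is the fixed contribution of the two top digits. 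When $d=f_{2k+3}$ one has $c=0$ and $\Phi_{k+1}(x)\le\tfrac{f_{2k+3}}{2}+f_{2k+1}-2<\tfrac{f_{2k+5}}{2}-1$, which lies below the whole critical set of part~(ii); when $d=f_{2k+2}$ with $a_{2k+3}(\gamma_i)=1$ one has $c=f_{2k+3}$, so $\Phi_{k+1}(x)\ge\tfrac{f_{2k+3}}{2}+2f_{2k}>\tfrac{f_{2k+3}}{2}$, which lies above the critical set of part~(i). The delicate case is $d=f_{2k+2}$ with $a_{2k+4}(\gamma_i)=1$: here, once $t\ge\tfrac{f_{2k+1}}{2}$, the term $f_{2k+2}$ produced in the low block collides with $f_{2k+4}$ through $f_{2k+2}+f_{2k+4}=f_{2k+5}$, and the resulting carry leaves the range of $\Phi_{k+1}$; a short computation then gives $\Phi_{k+1}(x)=t-\tfrac{f_{2k+1}}{2}\in[0,\tfrac{f_{2k+1}}{2}-2]$, which is again below $\tfrac{f_{2k+3}}{2}-1$ and hence still misses the critical set. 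In every case $\Phi_{k+1}(x)$ avoids the critical set throughout $W$, so $s_x=s_{x+d}$ for all $x\in W$, the two antidiagonal sequences agree, and $H_{\gamma_i-f_{2k+1},\,f_{2k+1}}=H_{\gamma_{i+1}-f_{2k+1},\,f_{2k+1}}$.
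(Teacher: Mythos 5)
Your proposal is correct and follows essentially the same route as the paper: both reduce the equality of the two determinants to the entrywise identity $s_x=s_{x+d}$ over the common index window, with $d=\gamma_{i+1}-\gamma_i\in\{f_{2k+2},f_{2k+3}\}$ read off from Lemma \ref{lem6}, and both then invoke Theorem \ref{lem3} at level $k+1$ (part (i) for the gap $f_{2k+2}$, part (ii) for the gap $f_{2k+3}$), checking that $\Phi_{k+1}$ avoids the relevant critical set throughout the window. Your write-up is in fact more detailed than the paper's (which only records $\Phi_{k+1}(\gamma_i)$ and leaves the window check implicit); the one blemish is that in the sub-case $d=f_{2k+2}$ with $a_{2k+3}(\gamma_i)=1$ the same carry phenomenon you analyse for $a_{2k+4}(\gamma_i)=1$ can also occur (via $f_{2k+2}+f_{2k+3}=f_{2k+4}$ once $t\ge\tfrac{f_{2k+1}}{2}$), so your stated bound $\Phi_{k+1}(x)\ge\tfrac{f_{2k+3}}{2}+2f_{2k}$ can fail there, although $\Phi_{k+1}(x)$ then equals either $u+f_{2k+4}>\tfrac{f_{2k+3}}{2}$ or $u\le\tfrac{f_{2k+1}}{2}-2<\tfrac{f_{2k+3}}{2}-1$ and still misses the critical pair, so the conclusion is unaffected.
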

\begin{proof}
	If $\gamma_{i+1}-\gamma_i=f_{2k+3}$, then $\Phi_{k+1}(\gamma_i)=\frac{f_{2k+3}}{2}$ and $\Phi_{k+1}(\gamma_i+f_{2k+1})<\frac{f_{2k+5}}{2}$. By Theorem \ref{lem3}(ii), we have 
	\begin{align*}
		\begin{pmatrix}
			s_{\gamma_{i}-f_{2k+1}} & \cdots & s_{\gamma_{i}+f_{2k+1}-2}
		\end{pmatrix} & = \begin{pmatrix}
			s_{\gamma_{i}-f_{2k+1}+f_{2k+3}} & \cdots & s_{\gamma_{i}+f_{2k+1}-2+f_{2k+3}}
		\end{pmatrix} \\
		& = \begin{pmatrix}
			s_{\gamma_{i+1}-f_{2k+1}} & \cdots & s_{\gamma_{i+1}+f_{2k+1}-2}
		\end{pmatrix}.
	\end{align*} 
	Consequently, $H_{\gamma_i-f_{2k+1},\,f_{2k+1}}=H_{\gamma_{i+1}-f_{2k+1},\,f_{2k+1}}$.

	If $\gamma_{i+1}-\gamma_i=f_{2k+2}$, then  $\Phi_{k+1}(\gamma_i)=\frac{f_{2k+3}}{2}+f_{2k+3}$ or $\frac{f_{2k+3}}{2}+f_{2k+4}$. By Theorem \ref{lem3}(i), we have 
	\begin{align*}
		\begin{pmatrix}
			s_{\gamma_{i}-f_{2k+1}} & \cdots & s_{\gamma_{i}+f_{2k+1}-2}
		\end{pmatrix} & = \begin{pmatrix}
			s_{\gamma_{i}-f_{2k+1}+f_{2k+2}} & \cdots & s_{\gamma_{i}+f_{2k+1}-2+f_{2k+2}}
		\end{pmatrix} \\
		& = \begin{pmatrix}
			s_{\gamma_{i+1}-f_{2k+1}} & \cdots & s_{\gamma_{i+1}+f_{2k+1}-2}
		\end{pmatrix}.
	\end{align*} 
	Consequently, $H_{\gamma_i-f_{2k+1},\,f_{2k+1}}=H_{\gamma_{i+1}-f_{2k+1},\,f_{2k+1}}$.
\end{proof}

According to Lemma \ref{lem-2k} and Lemma \ref{lem-2k+1}, the values of the determinants on the bottom edges of $U_{k,i}$ and $V_{k,i}$ only depends on $k$. We improve Lemma \ref{lem9} to the following proposition.
\begin{proposition} \label{prop-i}
	Let $k\ge 0$. For all $i\ge 1$, 
	\begin{enumerate}
		\item[(i)] \emph{(Bottom edges of $U_{k,i}$ and $V_{k,i}$)} for all $1\leq r\leq f_{2k+1}$ and $1\leq r'\leq f_{2k+2}$, \[H_{\alpha_i-f_{2k+3}+r',\, f_{2k}} = H_{\beta'_i+r,\, f_{2k}} = H_{\alpha_1-f_{2k},\, f_{2k}};\]
		\item[(ii)] \emph{(Bottom edge of $T_{k,i}$)} $H_{\gamma_{i}-f_{2k+2}+r,\, f_{2k+1}}=(-1)^{r+1}H_{\gamma_{1}-f_{2k+1},\, f_{2k+1}}$ for all $1\leq r \leq f_{2k}$ with $\gamma_{i}-f_{2k+2}+r\ge 0$.
	\end{enumerate}
\end{proposition}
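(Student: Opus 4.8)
The plan is to bootstrap Lemma \ref{lem9} (which handles a single parallelogram) into a statement that is uniform in $i$, by chaining the single-parallelogram identities together with the ``transition'' lemmas (Lemma \ref{lem-2k} and Lemma \ref{lem-2k+1}) that connect consecutive parallelograms. The key observation is that the gaps between consecutive elements of $E'_k$ (equivalently, the spacings $\gamma_{i+1}-\gamma_i$) are only $f_{2k+2}$ or $f_{2k+3}$, as recorded in Remark \ref{re-gap-E}, so there are only two cases to analyze at each transition, and these are exactly the two cases treated in Lemma \ref{lem-2k}.

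For part (i), I would first invoke Lemma \ref{lem9}(i) and (ii) to collapse each horizontal run: Lemma \ref{lem9}(i) gives that $H_{\beta'_i+r,\,f_{2k}}$ is independent of $r$ (equal to $H_{\beta'_i+1,\,f_{2k}}$) for $1\le r\le f_{2k+1}$, and Lemma \ref{lem9}(ii) gives that $H_{\alpha_i-f_{2k+3}+r',\,f_{2k}}$ is independent of $r'$ (equal to $H_{\alpha_i-f_{2k},\,f_{2k}}$) for $1\le r'\le f_{2k+2}$. Thus it suffices to show that the single value attached to each bottom edge is the \emph{same} constant as $i$ varies. Here is where I would use Lemma \ref{lem-2k}: since $\gamma_{i+1}-\gamma_i\in\{f_{2k+2},f_{2k+3}\}$, that lemma shows the determinant value on the bottom edge of the $i$-th parallelogram matches the value on the bottom edge of the $(i+1)$-th, regardless of which gap occurs (the two cases of Lemma \ref{lem-2k} correspond exactly to the two possible gaps, and in each case the relevant offset $\gamma_i+f_{2k}+1$ or $\gamma_i+1$ is precisely the left endpoint $\beta'_i+1$ or $\alpha_i-f_{2k}$ of the next bottom run). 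Iterating this identity down to $i=1$ identifies every such value with $H_{\alpha_1-f_{2k},\,f_{2k}}$. The one bookkeeping point requiring care is to confirm that the quantities $\gamma_i+f_{2k}+1$, $\gamma_i+1$, $\beta'_i+1$, and $\alpha_i-f_{2k+3}+1$ really name the same lattice columns as one moves between the $U$- and $V$-type bottom edges; this is a matter of translating the definitions of $\alpha_i,\beta'_i,\gamma_i$ through Lemma \ref{lem6} and the partition structure of Proposition \ref{lem20}.

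For part (ii), the strategy is parallel but simpler. Lemma \ref{lem9}(iii) already gives the alternating-sign behavior $H_{\gamma_i-f_{2k+2}+r,\,f_{2k+1}}=(-1)^{r+1}H_{\gamma_i-f_{2k+1},\,f_{2k+1}}$ within a single $T_{k,i}$, so I only need the anchor value $H_{\gamma_i-f_{2k+1},\,f_{2k+1}}$ to be independent of $i$. That is exactly the content of Lemma \ref{lem-2k+1}, which asserts $H_{\gamma_i-f_{2k+1},\,f_{2k+1}}=H_{\gamma_{i+1}-f_{2k+1},\,f_{2k+1}}$ for all $i$; chaining it down to $i=1$ replaces $\gamma_i$ by $\gamma_1$ and yields the claimed formula.

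The main obstacle I anticipate is not any deep computation but rather the \emph{index alignment}: one must verify that as $i$ increases by one, the left endpoint of the bottom run of the next parallelogram coincides exactly with the point to which Lemma \ref{lem-2k} (or Lemma \ref{lem-2k+1}) transports the previous value, so that the chain of equalities telescopes without a gap. This depends delicately on whether the gap $\gamma_{i+1}-\gamma_i$ equals $f_{2k+2}$ or $f_{2k+3}$ and on correctly matching the $F''_k$-shifts $\beta'_i=\beta_i-f_{2k}$ with the $E'_{k+1}$-shifts defining $\alpha_i$; I would organize this by treating the two gap cases separately, exactly mirroring the case split already performed in the proofs of Lemma \ref{lem20} and Lemma \ref{lem-2k}.
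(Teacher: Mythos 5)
Your proposal is correct and follows essentially the same route as the paper: Lemma \ref{lem9} collapses each bottom edge to a single value, Lemma \ref{lem-2k} (resp.\ Lemma \ref{lem-2k+1}) transports that value between consecutive parallelograms of the family $\{U_{k,j}\}\cup\{V_{k,j}\}$ (resp.\ $\{T_{k,j}\}$), and chaining down to the first parallelogram gives the stated constants. The index-alignment point you flag is exactly what the paper handles by noting that $\alpha_i-f_{2k+2}$ and $\beta'_i+f_{2k}$ both lie in $E'_k$, so the two cases of Lemma \ref{lem-2k} match the two possible gaps in $E'_k$.
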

\begin{proof}
	Since $\alpha_i-f_{2k+2}\in E'_k$ and $\beta'_i+f_{2k}\in E'_k$, Lemma \ref{lem-2k} shows that the values of two determinants on the bottom edge of two adjacent parallelograms in $\{U_{k,j}\}_{j\ge 1}\cup\{V_{k,j}\}_{j\ge 1}$ are the same. Then Lemma \ref{lem9} implies the result (i). The result (ii) follows from Lemma \ref{lem9}(iii) and Lemma \ref{lem-2k+1}.
\end{proof}

\subsection{On the boundary of $U_{k,i}$}
	\begin{lemma} \label{lem13}
		Let $k\ge 0$ and $i\ge 1$. For all $0\leq r\leq f_{2k+2}-1$ with $\alpha_i-f_{2k+4}+2+r\ge 0$, 
		\begin{enumerate}
			\item[(i)] \emph{(Right edge of $U_{k,i}$)}   $H_{\alpha_i-f_{2k+3}+1+r,f_{2k+3}-1-r}=(-1)^{rk}(-1)^{\frac{r(r-1)}{2}}H_{\alpha_i-f_{2k+3}+1,f_{2k+3}-1}$,
			\item[(ii)] \emph{(Left edge of $U_{k,i}$)} $H_{\alpha_i-f_{2k+4}+2+r,f_{2k+3}-1-r}=(-1)^{rk}(-1)^{\frac{r(r-1)}{2}}H_{\alpha_i-f_{2k+3}+1,f_{2k+3}-1}$, 
			\item[(iii)] \emph{(Upper edge of $U_{k,i}$)} $H_{\alpha_i-f_{2k+4}+2+r,f_{2k+3}-1} = (-1)^r H_{\alpha_i-f_{2k+3}+1,f_{2k+3}-1}$.
		\end{enumerate}
    \end{lemma}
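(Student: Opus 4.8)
The plan is to prove all three identities by reducing them to single-step recursions in $r$ and iterating, in the spirit of the horizontal-edge computations in Lemma~\ref{lem9}. Throughout I write $A_j=(s_j,s_{j+1},\dots)$ for the rows of the Hankel matrices in question and use Theorem~\ref{lem3}, together with the gap structure of $E_{k+1}$ and $F_k$ furnished by Lemmas~\ref{lem6} and~\ref{lem7}, to pin down exactly where $\mathbf{s}$ fails to be invariant under the shift $s_{\cdot+f_{2k+3}}$ governing $U_{k,i}$. The single geometric input is that $\alpha_i\in E'_{k+1}$: by Theorem~\ref{lem3}(ii) applied with $k$ replaced by $k+1$, this shift is invariant everywhere in the relevant range except at $\alpha_i$ (and $\alpha_i-1$), and this lone defect is what survives every reduction below. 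Every recursion is anchored at the top-right corner $(\alpha_i-f_{2k+3}+1,f_{2k+3}-1)$.

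I would establish the upper-edge identity (iii) first. Here $n=f_{2k+3}-1$ is fixed and we pass from $(m,f_{2k+3}-1)$ to $(m+1,f_{2k+3}-1)$; exactly as in Lemma~\ref{lem9}, the two matrices share all but one row, and away from the defect the rows of one are shift-images of those of the other. Matching them and relocating the exceptional row reduces the comparison to a permutation whose parity I read off from Lemma~\ref{lem4}, producing the factor $-1$ at each step and hence $(-1)^r$ after iteration. In particular, the case $r=0$ shows that the top-left and top-right corner determinants coincide, which is precisely the anchor I will need for the left edge.

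The right-edge identity (i) is the crux, because the step $(m,n)\to(m+1,n-1)$ now shrinks the matrix, so a pure permutation argument is unavailable. My plan is to note that $M_{m+1,n-1}$ is the submatrix of $M_{m,n}$ obtained by deleting the first row and the last column, and then to use the shift-invariance of $\mathbf{s}$—valid everywhere but at the defect $\alpha_i$—to clear, by row and column operations, all but one entry of that row and column; a Laplace expansion along them returns $H_{m+1,n-1}$ up to sign. I expect the sign to split into two independent pieces, both governed by Lemma~\ref{lem4}: the relocation of the exceptional line inside a matrix of size $f_{2k+3}-1-r$ accumulates to the quadratic factor $(-1)^{r(r-1)/2}$, while a residual, $k$-dependent interchange accumulates to $(-1)^{rk}$. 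Iterating the single step downward from the top-right corner then gives (i).

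Finally, the left-edge identity (ii) is obtained by running the identical size-reducing recursion down the left edge, the requisite defect again being supplied by $\alpha_i\in E'_{k+1}$ together with the adjacency in $E_{k+1}$ from Lemma~\ref{lem6}; this reproduces the same sign factors $(-1)^{rk}(-1)^{r(r-1)/2}$, now anchored at the top-left corner, which equals the top-right corner by (iii). The main obstacle I anticipate is the bookkeeping in the size-reducing steps of (i) and (ii): one must verify that after the clearing exactly one nonzero entry remains, so that the Laplace expansion reproduces a genuine Hankel minor rather than a more general one, and then combine the two parities from Lemma~\ref{lem4} into the exponents $rk$ and $\tfrac{r(r-1)}{2}$ without sign slips.
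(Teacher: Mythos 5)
Your overall strategy (clear one line of the matrix using shift-invariance with a single defect, Laplace-expand, iterate, and read off signs from Lemma \ref{lem4}) is the right one and matches the paper's method for the size-reducing steps. But there are two concrete problems. First, you have identified the wrong shift as the engine. You build everything on the shift $s_{\cdot}\mapsto s_{\cdot+f_{2k+3}}$ coming from $\alpha_i\in E'_{k+1}$ and Theorem \ref{lem3}(ii) at level $k+1$; however, every matrix on the boundary of $U_{k,i}$ has order at most $f_{2k+3}-1$, so no two of its rows (or columns) are $f_{2k+3}$ apart, and that shift cannot be realized as a row or column operation inside the matrix. The operative shift is by $f_{2k}$: since $y:=\alpha_i-f_{2k+3}\in F'_k\subseteq F_k$, Lemma \ref{lem7} shows the next element of $F_k$ is $\alpha_i$, so by Theorem \ref{lem3}(i) the relation $s_{n}=s_{n+f_{2k}}$ holds throughout the relevant window except at $n=\alpha_i-1,\alpha_i$; subtracting the column (resp.\ row) located $f_{2k}$ positions away leaves the single entry $(-1)^k$ (via Proposition \ref{prop:2} and Lemma \ref{lem5}) and yields the factor $(-1)^{k+r}$ per step. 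You mention $F_k$ and Lemma \ref{lem7} in passing, but your ``single geometric input'' as stated does not support the reductions.

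Second, your proof of (iii) cannot work as described, and since you use (iii) to anchor (ii), the ordering collapses. A Lemma \ref{lem9}--style comparison of $M_{m,n}$ and $M_{m+1,n}$ with $n=f_{2k+3}-1$ requires the row starting at $m$ to equal the row starting at $m+n=m+f_{2k+3}-1$, i.e.\ invariance under a shift by $f_{2k+3}-1$, which is not one of the $f_j$ and is not controlled by Theorem \ref{lem3}; moreover, even granting such invariance, the cyclic permutation of $n$ rows contributes $(-1)^{n-1}=+1$ (as $f_{2k+3}$ is even), whereas the correct factor per step along the upper edge is $-1$. The paper instead proves (i) first, then (ii) (anchored at the bottom-left corner through Lemma \ref{lem9}(ii) and then (i), not at the top-left corner), and only then (iii), by clearing the last $r$ columns with the $f_{2k}$-shift to produce an $r\times r$ anti-triangular block $X$ and reducing the upper-edge determinant to the left-edge determinant of (ii); the product of the block's sign $(-1)^{(k+1)r}(-1)^{r(r-1)/2}$ with the sign from (ii) gives $(-1)^r$. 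You would need to restructure your argument along these lines.
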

	\begin{proof}
		Write $y=\alpha_i-f_{2k+3}$. Recall that $\alpha_i\in E'_{k+1}$. So $\Phi_{k+1}(y)=\frac{f_{2k+1}}{2}$ and $y\in F'_{k}$.
		
		(i)  For $0\le r<f_{2k+2}$, let $A_{y+r+j}$ be the $j$-th column of $M_{y+1+r,\,f_{2k+3}-1-r}$.
		Applying Lemma \ref{lem7} and Theorem \ref{lem3}(i), we see $s_{y+r+\ell}=s_{y+r+\ell+f_{2k}}$ for $1\le \ell \le f_{2k+3}-r-2$ and $s_{y+f_{2k+3}-1}\neq s_{y+f_{2k+3}-1+f_{2k}}$. Then Proposition \ref{prop:2} and Lemma \ref{lem5} yields $s_{y+f_{2k+3}-1}-s_{y+f_{2k+3}-1+f_{2k}}=(-1)^{k}$. Therefore, 
		\[A_{y+r+1} -A_{y+r+f_{2k}}= \begin{pmatrix}
			s_{y+r+1}\\ s_{y+r+2} \\ \vdots\\ s_{y+f_{2k+3}-2}\\ s_{y+f_{2k+3}-1}
		\end{pmatrix}-\begin{pmatrix}
			s_{y+r+1+f_{2k}}\\ s_{y+r+2+f_{2k}} \\ \vdots\\ s_{y+f_{2k+3}-2+f_{2k}}\\ s_{y+f_{2k+3}-1+f_{2k}}
		\end{pmatrix}= \begin{pmatrix}
			0\\ 0\\ \vdots\\ 0\\ (-1)^{k}
		\end{pmatrix}\]
		and 
		\begin{align}\label{eq:u-8}
			H_{y+1+r,\,f_{2k+3}-1-r} & = \begin{vmatrix}
				A_{y+r+1} & A_{y+r+2} & \dots & A_{y+f_{2k+3}-1}
			\end{vmatrix}\nonumber\\
			& = \begin{vmatrix}
				(A_{y+r+1}-A_{y+r+f_{2k}}) & A_{y+r+2} & \dots & A_{y+f_{2k+3}-1}
			\end{vmatrix}\nonumber\\
			& =\begin{vmatrix}
				\mathbf{0}_{ f_{2k+3}-2-r , 1}& M_{y+r+2,f_{2k+3}-r-2}\\
				(-1)^k& *
				\end{vmatrix}\nonumber\\ 
			&=(-1)^k(-1)^{1+f_{2k+3}-1-r}H_{y+r+2,f_{2k+3}-r-2}\nonumber\\
			& = (-1)^{k+r}H_{y+r+2,f_{2k+3}-r-2} 
		\end{align}
		where in the last equality we apply Lemma \ref{lem4} and $\mathbf{0}_{i,j}$ denotes the $i\times j$ zero matrix. It follows from Eq.~\eqref{eq:u-8} that 
		\begin{align*}
			H_{y+1,\,f_{2k+3}-1} & = (-1)^{k}H_{y+2,\,f_{2k+3}-2} =(-1)^{k}(-1)^{k+1}H_{y+3,\,f_{2k+3}-3} \\
			& = (-1)^{k}(-1)^{k+1}\dots(-1)^{k+r-1}H_{y+1+r,\,f_{2k+3}-1-r}\\
			& = (-1)^{rk}(-1)^{\frac{r(r-1)}{2}}H_{y+1+r,\,f_{2k+3}-1-r}.
		\end{align*}
		
		(ii) 
		Let $B_{y-f_{2k+2}+1+r+j}$ be the $j$-th row of $M_{y-f_{2k+2}+2+r,f_{2k+3}-1-r}$. Combining Lemma \ref{lem7}, Theorem \ref{lem3}(i), Proposition \ref{prop:2} and Lemma \ref{lem5}, a similar argument as above yields 
		\begin{align}\label{eq:u-9}
			H_{y-f_{2k+2}+2+r,f_{2k+3}-1-r} & = \begin{vmatrix}
				B_{y-f_{2k+2}+1+r+1}\\
				\vdots\\
				B_{y}\\
				\vdots\\
				B_{y+f_{2k}}
			\end{vmatrix}=\begin{vmatrix}
				B_{y-f_{2k+2}+1+r+1}\\
				\vdots\\
				B_{y}\\
				\vdots\\
				B_{y+f_{2k}}-B_{y}
			\end{vmatrix}\nonumber\\
			& = \begin{vmatrix}
			* & M_{y-f_{2k+2}+2+(r+1),f_{2k+3}-1-(r+1)}\\
			(-1)^k& \mathbf{0}_{ 1 , f_{2k+3}-2-r}
			\end{vmatrix}\nonumber\\
			& = (-1)^{k+r}H_{y-f_{2k+2}+2+(r+1),f_{2k+3}-1-(r+1)}.
		\end{align}
		Applying Eq.~\eqref{eq:u-9}, we have 
		\begin{align*}
			H_{y-f_{2k+2}+2+r,f_{2k+3}-1-r} & = (-1)^{k+r}(-1)^{k+r+1}\cdots(-1)^{k+f_{2k+2}-2}H_{y+1,\, f_{2k}}\\
			& = (-1)^{\frac{(2k+r+f_{2k+2}-2)(f_{2k+2}-r-1)}{2}}H_{y+1,\, f_{2k}}\\
			& = (-1)^{\frac{(2k+r+f_{2k+2}-2)(f_{2k+2}-r-1)}{2}}H_{y+f_{2k+2},\, f_{2k}}\tag{by Lemma \ref{lem9}(ii)}\\
			& = (-1)^{rk}(-1)^{\frac{r(r-1)}{2}}H_{y+1,\,f_{2k+3}-1}. \tag{by Lemma \ref{lem13}(i)}
		\end{align*}

		(iii) Let $C_j$ be the $j$-th column of $M_{y-f_{2k+2}+2+r,f_{2k+3}-1}$. Then 
		\begin{align*}
			H_{y-f_{2k+2}+2+r,f_{2k+3}-1} & = \det(C_1,C_2,\dots,C_{f_{2k+3}-1})\\
			& = \det(C_1,C_2,\dots,C_{f_{2k+3}-1-r}, C'_{1}, C'_{2},\dots,C'_{r})
		\end{align*}
		where $C'_{p}=C_{f_{2k+3}-1-r+p}-C_{f_{2k+3}-1-r+p-f_{2k}}$ for $1\le p \le r$. According to Lemma \ref{lem7}, Theorem \ref{lem3}(i), we have $s_{y+\ell}=s_{y+\ell+f_{2k}}$ for all $1\leq \ell \leq f_{2k+3}-2$ and $f_{2k+3}+1\leq \ell \leq f_{2k+3}+r-2$. 
		By Proposition \ref{prop:2} and Lemma \ref{lem5}, we obtain that $s_{y+f_{2k+3}-1+f_{2k}}-s_{y+f_{2k+3}-1}=(-1)^{k+1}$ and $s_{y+f_{2k+3}+f_{2k}}-s_{y+f_{2k+3}}=(-1)^{k}$. Thus
		\[(C'_{1}, C'_{2},\dots,C'_{r}) = \begin{pmatrix}
			\mathbf{0}_{ f_{2k+3}-1-r, r}\\ X
		\end{pmatrix}\] 
		where $X$ is the $r\times r$ matrix \[\begin{pmatrix}
			0 & \cdots & 0 & (-1)^{k+1}\\
			\vdots & \iddots & \iddots & (-1)^k\\
			\vdots & \iddots & \iddots & \vdots\\
			(-1)^{k+1} & (-1)^k & \cdots & 0
		\end{pmatrix}.\]
		Expanding by the last $r$ columns, we have 
		\begin{align*}
			H_{y-f_{2k+2}+2+r,f_{2k+3}-1} & = \det\begin{pmatrix}
				M_{y-f_{2k+2}+2+r,f_{2k+3}-1-r} & \mathbf{0}_{f_{2k+3}-1-r, r}\\
				* & X
			\end{pmatrix}\\
			&= (-1)^{(k+1)r}(-1)^{\frac{(r-1)r}{2}}H_{y-f_{2k+2}+2+r,f_{2k+3}-1-r}\\
			& = (-1)^{(k+1)r}(-1)^{\frac{(r-1)r}{2}}(-1)^{rk}(-1)^{\frac{r(r-1)}{2}}H_{y+1,f_{2k+3}-1} \tag{by Lemma \ref{lem13}(ii)}\\
			& = (-1)^r H_{y+1,f_{2k+3}-1}. 
		\end{align*}
\end{proof}
\begin{remark} \label{rem4}
	 From Proposition \ref{prop-i}(i) and Lemma \ref{lem13}, Hankel determinants on the boundary of $U_{k,i}$ can be determined by $H_{\alpha_1-f_{2k+3}+1,f_{2k+3}-1}=H_{\frac{f_{2k+1}}{2}+1,f_{2k+3}-1}$ (the upper right corner of $U_{k,1}$).
\end{remark}

\subsection{On the boundary of $V_{k,i}$}

	\begin{lemma} \label{lem14} 
	Let $k\ge 0$ and $i\ge 1$. For all $0\leq r\leq f_{2k+1}-1$,
	\begin{enumerate}
		\item[(i)] \emph{(Left edge of $V_{k,i}$)} $H_{\beta'_i-f_{2k+1}+2+r,f_{2k+2}-1-r}=(-1)^{rk}(-1)^{\frac{r(r+1)}{2}}H_{\beta'_i-f_{2k+1}+2,f_{2k+2}-1}$,
		\item[(ii)] \emph{(Right edge of $V_{k,i}$)} $H_{\beta'_i+1+r,f_{2k+2}-1-r}=(-1)^{rk}(-1)^{\frac{r(r+1)}{2}}H_{\beta'_i+1,f_{2k+2}-1}$,
		\item[(iii)] \emph{(Upper edge of $V_{k,i}$)} $H_{\beta'_i-f_{2k+1}+2+r,f_{2k+2}-1}= H_{\beta'_i+1,f_{2k+2}-1}$.
	\end{enumerate}
\end{lemma}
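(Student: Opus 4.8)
The plan is to mirror the proof of Lemma \ref{lem13}, replacing the role of the point $y=\alpha_i-f_{2k+3}\in F'_k$ there by the point $\beta'_i\in F''_k$ here. Recall that $\Phi_k(\beta'_i)=\frac{f_{2k+1}}{2}$ and, by Lemma \ref{lem7}, the next element of $F_k$ after $\beta'_i$ is $\beta'_i+f_{2k+2}$. Consequently, in each row or column I reduce, the $f_{2k}$-shift criterion of Theorem \ref{lem3}(i) is met at exactly one relevant index: at $\beta'_i$ itself (where $\Phi_k=\frac{f_{2k+1}}{2}$) for the rows governing the left edge, and at $\beta'_i+f_{2k+2}-1$ (where $\Phi_k=\frac{f_{2k+1}}{2}-1$) for the columns governing the right edge. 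The one structural difference from Lemma \ref{lem13} is that the ambient Hankel matrices now have sizes $f_{2k+2}-1-r$ rather than $f_{2k+3}-1-r$; since $f_{2k+2}$ is odd while $f_{2k+3}$ is even (Lemma \ref{lem4}(i)), every cofactor expansion picks up one extra sign, and this is precisely what turns the exponent $\frac{r(r-1)}{2}$ of Lemma \ref{lem13} into $\frac{r(r+1)}{2}$ here.

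For the left edge (i) I would run the row reduction of Lemma \ref{lem13}(ii): the bottom row of $M_{\beta'_i-f_{2k+1}+2+r,\,f_{2k+2}-1-r}$ begins with $s_{\beta'_i+f_{2k}}$, and I subtract from it the row beginning with $s_{\beta'_i}$ (a genuine row of the matrix for $r\le f_{2k+1}-2$). By Theorem \ref{lem3}(i) together with the gap structure of Lemma \ref{lem7}, every entry of this difference vanishes except the first, sitting over the index $\beta'_i$; its value is $s_{\beta'_i+f_{2k}}-s_{\beta'_i}=(-1)^k$, because $a_0(\beta'_i)=a_0(\frac{f_{2k+1}}{2})$ forces $s_{\beta'_i}=s_{f_{2k+1}/2}$, whose value is given by Lemma \ref{lem5} via Proposition \ref{prop:2}. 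Expanding along this row gives the one-step relation $H_{\beta'_i-f_{2k+1}+2+r,\,f_{2k+2}-1-r}=(-1)^{k+1+r}H_{\beta'_i-f_{2k+1}+2+(r+1),\,f_{2k+2}-2-r}$, and telescoping from $r=0$ produces the factor $(-1)^{rk}(-1)^{r(r+1)/2}$. The right edge (ii) is the column version of the same argument: the first column of $M_{\beta'_i+1+r,\,f_{2k+2}-1-r}$ minus its $f_{2k}$-shift is supported on its single bottom entry, over the index $\beta'_i+f_{2k+2}-1$, where again the value is $(-1)^k$ by Lemma \ref{lem5}; the identical recursion and telescoping then apply.

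For the upper edge (iii) I would reproduce the block argument of Lemma \ref{lem13}(iii): subtract the $f_{2k}$-shift from each of the last $r$ columns of $M_{\beta'_i-f_{2k+1}+2+r,\,f_{2k+2}-1}$. The resulting $r$ columns are supported in their bottom $r$ entries and assemble into the same anti-triangular block $X$ (anti-diagonal entries $\pm(-1)^k$) encountered in Lemma \ref{lem13}(iii), so a block expansion yields $H_{\beta'_i-f_{2k+1}+2+r,\,f_{2k+2}-1}=(-1)^{(k+1)r}(-1)^{(r-1)r/2}\,H_{\beta'_i-f_{2k+1}+2+r,\,f_{2k+2}-1-r}$. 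Feeding in part (i) and simplifying the exponents modulo $2$ collapses all signs to $+1$, so the top-edge determinant is independent of $r$; taking $r=f_{2k+1}-1$ then identifies this common value with the top-right corner $H_{\beta'_i+1,\,f_{2k+2}-1}$.

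The main obstacle, exactly as in Lemma \ref{lem13}, is the careful verification that the $f_{2k}$-shift creates precisely one nonzero entry in each reduced row or column, so that the cofactor minor is exactly the next Hankel matrix in the recursion; this rests on reading off which indices of the relevant interval satisfy the trigger $\Phi_k\in\{\frac{f_{2k+1}}{2},\frac{f_{2k+1}}{2}-1\}$, controlled by the gaps in $F_k$ from Lemma \ref{lem7}. The secondary bookkeeping is confirming that the lone entry is $(-1)^k$ and not $-(-1)^k$ in each case, threading the parity of $f_{2k+2}$ through all three edges, and carrying out the same anti-diagonal sign analysis of $X$ as in Lemma \ref{lem13}(iii).
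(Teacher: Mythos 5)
Your proposal is correct and follows essentially the same route as the paper: the same row reduction (subtracting the row at $\beta'_i$ from the one at $\beta'_i+f_{2k}$) for the left edge, the same column reduction (first column minus its $f_{2k}$-shift, supported on the bottom entry at index $\beta'_i+f_{2k+2}-1$) for the right edge, and the same anti-triangular block $X$ for the upper edge, with the identical sign bookkeeping via Lemma \ref{lem7}, Theorem \ref{lem3}(i), Lemma \ref{lem5} and the parity of $f_{2k+2}$ from Lemma \ref{lem4}. Your observation that the oddness of $f_{2k+2}$ (versus the evenness of $f_{2k+3}$ in Lemma \ref{lem13}) is exactly what converts $\frac{r(r-1)}{2}$ into $\frac{r(r+1)}{2}$ matches the computation in the paper.
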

\begin{proof}
	(i) Denote by $A_{\beta'_i-f_{2k+1}+1+r+j}$ the $j$-th row of $M_{\beta'_i-f_{2k+1}+2+r,f_{2k+2}-1-r}$. Then 
	\begin{align*}
	H_{\beta'_i-f_{2k+1}+2+r,f_{2k+2}-1-r}& = \det \begin{pmatrix}
	A_{\beta'_i-f_{2k+1}+2+r}\\
	A_{\beta'_i-f_{2k+1}+2+r+1}\\
	\vdots\\
	A_{\beta'_i+f_{2k}}
	\end{pmatrix} = \det \begin{pmatrix}
		A_{\beta'_i-f_{2k+1}+2+r}\\
		A_{\beta'_i-f_{2k+1}+2+r+1}\\
		\vdots\\
		A_{\beta'_i+f_{2k}} - A_{\beta'_i}
		\end{pmatrix}.
	\end{align*}
	From Lemma \ref{lem7}, Theorem \ref{lem3} and Lemma \ref{lem5}, we have
	\[	A_{\beta'_i+f_{2k}}-A_{\beta'_i}=((-1)^k,\, 0 ,\, \cdots ,\, 0).
	\]
	For $0\leq r\leq f_{2k+1}-1$,
	\begin{align*}
	H_{\beta'_i-f_{2k+1}+2+r,f_{2k+2}-1-r}&= \begin{pmatrix}
	* & M_{\beta'_i-f_{2k+1}+2+(r+1),f_{2k+2}-1-(r+1)}\\
	(-1)^k& \mathbf{0}_{1 , f_{2k+2}-2-r}
	\end{pmatrix}\\
	& = (-1)^k(-1)^{1+f_{2k+2}-1-r}H_{\beta'_i-f_{2k+1}+2+(r+1),f_{2k+2}-1-(r+1)}\\
	& = (-1)^{k+1+r}H_{\beta'_i-f_{2k+1}+2+(r+1),f_{2k+2}-1-(r+1)}. \tag{by Lemma \ref{lem4}(i)}
	\end{align*}
	Thus	
	\begin{align*}
	H_{\beta'_i-f_{2k+1}+2+r,f_{2k+2}-1-r}&= (-1)^{k+1+r-1}(-1)^{k+1+r-2}\cdots(-1)^{k+1}H_{\beta'_i+2-f_{2k+1},f_{2k+2}-1}\\
	&=(-1)^{r(k+1)}(-1)^{\frac{r(r-1)}{2}}H_{\beta'_i+2-f_{2k+1},f_{2k+2}-1}.
	\end{align*}

	(ii)
	Let $B_{\beta'_i+r+j}$ be the $j$-th column of $M_{\beta'_i+1+r,f_{2k+2}-1-r}$. 
	\begin{align*}
	H_{\beta'_i+1+r,f_{2k+2}-1-r}&=\det\begin{pmatrix}
	s_{\beta'_i+1+r} & s_{\beta'_i+2+r} & \cdots & s_{\beta'_i+f_{2k+2}-1}\\
	\vdots & \vdots & \iddots &  \vdots\\
	s_{\beta'_i+f_{2k+2}-1} & s_{\beta'_i+f_{2k+2}} & \cdots & s_{\beta'_i+2f_{2k+2}-3-r}
	\end{pmatrix}\\
	&=\det\begin{pmatrix}
	B_{\beta'_i+1+r} & B_{\beta'_i+2+r} & \cdots & B_{\beta'_i+f_{2k+2}-1}
	\end{pmatrix}\\
	&=\det\begin{pmatrix}
	B_{\beta'_i+1+r}-B_{\beta'_i+1+r+f_{2k}} & B_{\beta'_i+2+r} & \cdots & B_{\beta'_i+f_{2k+2}-1}
	\end{pmatrix}.
	\end{align*}
	Recall that $\beta'_i\in F''_k$. By Lemma \ref{lem7}, $\beta'_i$ and $\beta'_i+f_{2k+2}$ are adjacent elements in $F_k$.  It follows from Theorem \ref{lem3}(i) and Lemma \ref{lem5} that 
	\begin{align*}
	H_{\beta'_i+1+r,f_{2k+2}-1-r}& =\det\begin{pmatrix}
	\mathbf{0}_{f_{2k+2}-2-r , 1}& M_{\beta'_i+1+(r+1),f_{2k+2}-1-(r+1)}\\
	(-1)^k & *
	\end{pmatrix}\\
	&=(-1)^k(-1)^{1+f_{2k+2}-1-r}H_{\beta'_i+1+(r+1),f_{2k+2}-1-(r+1)}\\
	& = (-1)^{k+1+r}H_{\beta'_i+1+(r+1),f_{2k+2}-1-(r+1)}.\tag{by Lemma \ref{lem4}(i)}
	\end{align*}
	Hence  $H_{\beta'_i+1+r,f_{2k+2}-1-r}=(-1)^{r(k+1)}(-1)^{\frac{r(r-1)}{2}}H_{\beta'_i+1,f_{2k+2}-1}$.

	(iii)
	Let $C_{\beta'_i-f_{2k+1}+1+r+j}$ be the $j$-th column of $M_{\beta'_i-f_{2k+1}+2+r,f_{2k+2}-1}$. Then 
	\begin{align*}
	H_{\beta'_i-f_{2k+1}+2+r,f_{2k+2}-1}
	& = \det \begin{pmatrix}
	C_{\beta'_i-f_{2k+1}+2+r} & C_{\beta'_i-f_{2k+1}+2+r+1} & \cdots & C_{\beta'_i+f_{2k}+r}
	\end{pmatrix}\\
	& = \det \begin{pmatrix}
		C_{\beta'_i-f_{2k+1}+2+r} & \cdots &  C_{\beta'_i+f_{2k}} & C'_{1}& \cdots & C'_{r}
		\end{pmatrix}
	\end{align*}
	where $C'_{p}=C_{\beta'_i+f_{2k}+p}-C_{\beta'_i+p}$ for $1\leq p \leq r$. By Lemma \ref{lem7} and Theorem \ref{lem3}(i), we have $s_{\beta'_i+\ell}=s_{\beta'_i+\ell+f_{2k}}$ for $1\leq \ell \leq f_{2k+2}-2$ and $f_{2k+2}+1\leq \ell \leq r+f_{2k+2}-1$. Moreover, by Proposition \ref{prop:2} and Lemma \ref{lem5}, we have $s_{\beta'_i+f_{2k}+f_{2k+2}-1}-s_{\beta'_i+f_{2k+2}-1}=(-1)^{k+1}$ and $s_{\beta'_i+f_{2k}+f_{2k+2}}-s_{\beta'_i+f_{2k+2}}=(-1)^{k}$. Thus
	\begin{align*}
	\begin{pmatrix}
		C'_{1}& \cdots & C'_{r}
	\end{pmatrix}
	& = \begin{pmatrix}
		\mathbf{0}_{f_{2k+2}-1-r,\, r}\\ X
	\end{pmatrix}
	\end{align*}
	where $X$ is the $r\times r$ matrix 
	\[\begin{pmatrix}
		0 & \cdots & 0 & (-1)^{k+1}\\
		\vdots & \iddots & \iddots & (-1)^k\\
		\vdots & \iddots & \iddots & \vdots\\
		(-1)^{k+1} & (-1)^k & \cdots & 0
		\end{pmatrix}.\]
	Now expanding $H_{\beta'_i-f_{2k+1}+2+r,f_{2k+2}-1}$ by its last $r$ columns, we obtain that for $0\leq r \leq f_{2k+1}-1$,
	\begin{align*}
	H_{\beta'_i-f_{2k+1}+2+r,f_{2k+2}-1}& =\det \begin{pmatrix}
	M_{\beta'_i-f_{2k+1}+2+r,f_{2k+2}-1-r}& \mathbf{0}_{f_{2k+2}-1-r,\, r}\\
	* & X
	\end{pmatrix} \\
	&=(-1)^{(k+1)r}(-1)^{\frac{(r-1)r}{2}}H_{\beta'_i-f_{2k+1}+2+r,f_{2k+2}-1-r}\\
	& = H_{\beta'_i-f_{2k+1}+2,f_{2k+2}-1}. \tag{by Lemma \ref{lem14}(i)}
	\end{align*}
\end{proof}
\begin{remark} \label{rem5}
	 From Proposition \ref{prop-i}(i) and Lemma \ref{lem14}, Hankel determinants on the boundary of $V_{k,i}$ can be determined by $H_{\frac{f_{2k+1}}{2}+f_{2k+3}+1, f_{2k}}$ (the lower left corner of $V_{k,1}$).
\end{remark}
\subsection{On the boundary of $T_{k,i}$}
\begin{lemma} \label{lem15}
	Let $k\ge 0$ and $i\ge 1$. 
\begin{enumerate}
	\item[(i)] \emph{(Left edge of $T_{k,i}$)} For all $0\le r\le f_{2k}-1$ with $\gamma_i-f_{2k+3}+2+r\ge 0$, 
	\[H_{\gamma_i-f_{2k+3}+2+r,\, f_{2k+2}-1-r}=(-1)^{rk}(-1)^{\frac{r(r-1)}{2}}H_{\gamma_i-f_{2k+3}+2,\, f_{2k+2}-1}.\]
	\item[(ii)] \emph{(Right edge of $T_{k,i}$)} For all $0\le r\le f_{2k}-1$ with $\gamma_i-f_{2k+2}+1+r\ge 0$, 
	\[H_{\gamma_i-f_{2k+2}+1+r,\, f_{2k+2}-1-r}=(-1)^{rk}(-1)^{\frac{r(r-1)}{2}}H_{\gamma_i-f_{2k+2}+1,\, f_{2k+2}-1}.\]
	\item[(iii)] \emph{(Upper edge of $T_{k,i}$)} For all $0\le r\le f_{2k}-1$ with $\gamma_i-f_{2k+3}+2+r\ge 0$, 
	\[H_{\gamma_i-f_{2k+3}+2+r,\, f_{2k+2}-1}= H_{\gamma_i-f_{2k+3}+2,\, f_{2k+2}-1}.\]
\end{enumerate}	
\end{lemma}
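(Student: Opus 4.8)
The plan is to handle the three edges exactly as the corresponding edges of $U_{k,i}$ and $V_{k,i}$ were treated in Lemmas \ref{lem13} and \ref{lem14}: on each edge I peel the Hankel determinant one layer at a time by a single column operation that exploits the near-periodicity of $\mathbf{s}$ recorded in Theorem \ref{lem3}. The controlling datum is $\gamma_i\in E'_k$, which gives $\Phi_k(\gamma_i)=\frac{f_{2k+3}}{2}$ and $\Phi_k(\gamma_i-f_{2k})=\frac{f_{2k+3}}{2}-f_{2k}=\frac{f_{2k+1}}{2}$. By Theorem \ref{lem3}, inside the index windows that occur here the only $p$ with $s_p\neq s_{p+f_{2k+1}}$ are $p\in\{\gamma_i,\gamma_i-1\}$, and the only $p$ with $s_p\neq s_{p+f_{2k}}$ are $p\in\{\gamma_i-f_{2k},\gamma_i-f_{2k}-1\}$; that no other such indices occur follows from the gap structure of $E_k$ and $F_k$ (the remarks after Lemmas \ref{lem6} and \ref{lem7}). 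Proposition \ref{prop:2} together with Lemma \ref{lem5} evaluated at $\frac{f_{2k+1}}{2}$ and $\frac{f_{2k+3}}{2}$ then gives the size of each jump, namely $s_{\gamma_i-f_{2k}}-s_{\gamma_i}=s_{\gamma_i-1}-s_{\gamma_i-1+f_{2k+1}}=(-1)^{k+1}$.

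For the right edge (ii), where $m+n=\gamma_i$, I subtract from the first column of $M_{m,n}$ the column lying $f_{2k+1}$ places to its right. Every entry of the resulting vector cancels except the bottom one, at index $\gamma_i-1$, which equals $(-1)^{k+1}$; expanding along this column and using that $f_{2k+2}$ is odd (Lemma \ref{lem4}) yields $H_{m,n}=(-1)^{k+r}H_{m+1,n-1}$. Iterating over $r$ accumulates the factor $(-1)^{rk}(-1)^{r(r-1)/2}$, which is the assertion.

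The left edge (i), where $m+n=\gamma_i-f_{2k}+1$, is the delicate case. Subtracting the column $f_{2k}$ to the right now leaves \emph{two} nonzero entries, at the indices $\gamma_i-f_{2k}$ and $\gamma_i-f_{2k}-1$ occupying the bottom two rows. Cofactor expansion along this column produces two minors, and the plan is to show that the minor attached to the upper entry vanishes: deleting that row leaves two rows at vertical distance $f_{2k+1}$, and these coincide because (by the first paragraph) $\mathbf{s}$ has no $f_{2k+1}$-jump anywhere in the intervening window. The one surviving term reproduces $H_{m,n}=(-1)^{k+r}H_{m+1,n-1}$ with the lower jump $(-1)^{k+1}$, giving the same factor $(-1)^{rk}(-1)^{r(r-1)/2}$.

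For the upper edge (iii), where $n=f_{2k+2}-1$ is fixed, I would apply $r$ simultaneous column operations to the last $r$ columns, exactly as in Lemmas \ref{lem13}(iii) and \ref{lem14}(iii), to put the matrix into block lower-triangular form with an $r\times r$ anti-diagonal block $X$ (whose anti-diagonal entries are $\pm(-1)^{k}$, read off from the jumps of $\mathbf{s}$ at $\gamma_i,\gamma_i-1$) in the corner. Expanding by these columns reduces $H_{\gamma_i-f_{2k+3}+2+r,\,f_{2k+2}-1}$ to a scalar multiple of the left-edge determinant $H_{\gamma_i-f_{2k+3}+2+r,\,f_{2k+2}-1-r}$; substituting part (i) and checking that $\det X$ cancels the factor $(-1)^{rk}(-1)^{r(r-1)/2}$ produced there leaves the sign-free identity of (iii). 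The main obstacle is the left edge: proving that exactly one of the two cofactor minors vanishes (the repeated-row argument) and verifying in every case that no further jump of $\mathbf{s}$ slips into the relevant windows — this is where the gaps of $E_k$, $F_k$ and the parities of $f_{2k},f_{2k+1},f_{2k+2}$ from Lemma \ref{lem4} must be tracked with care.
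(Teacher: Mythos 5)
Your parts (ii) and (iii) follow the paper's proof essentially verbatim: the same column operation $B_1-B_{1+f_{2k+1}}$ with a single surviving entry $(-1)^{k+1}$ at the bottom for the right edge, and the same block anti-triangular expansion by the last $r$ columns for the upper edge, with $\det X=(-1)^{kr}(-1)^{\frac{r(r-1)}{2}}$ cancelling the sign produced by part (i). Part (i) is where you genuinely diverge, and it is also where the paper sidesteps your ``delicate case'' entirely: instead of a column operation at shift $f_{2k}$, the paper performs a \emph{row} operation at shift $f_{2k+1}$, subtracting the row beginning at $s_{\gamma_i-f_{2k+2}}$ from the last row (which begins at $s_{\gamma_i-f_{2k}}$). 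Since $\gamma_i-f_{2k+2}$ and $\gamma_i$ are adjacent in $E_k$ by Lemma \ref{lem6}, Theorem \ref{lem3}(ii) leaves only the first entry of that difference nonzero, equal to $(-1)^{k+1}$, and a one-term cofactor expansion immediately gives $H=(-1)^{k+\ell}H_{m+1,n-1}$. Your route --- the $f_{2k}$-shift column operation, which necessarily leaves two nonzero entries because both $\gamma_i-f_{2k}$ and $\gamma_i-f_{2k}-1$ lie in $F_k\cup(F_k-1)$, followed by killing the unwanted cofactor through the duplicated rows at vertical distance $f_{2k+1}$ (rows $n$ and $n-f_{2k+1}=f_{2k}-1-\ell$, which is a legitimate row index since $\ell\le r-1\le f_{2k}-2$) --- is correct: the window $[\gamma_i-f_{2k+2}+1,\ \gamma_i-2-\ell]$ contains no element of $E_k\cup(E_k-1)$, so those two rows do coincide on the surviving columns and the extra minor vanishes. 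Both arguments yield the same recursion and the same total sign $(-1)^{rk}(-1)^{\frac{r(r-1)}{2}}$; the paper's choice of shift simply buys a one-term expansion where yours needs a two-term expansion plus a vanishing-minor argument.
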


\begin{proof}
	To shorten the notation, write $x=\gamma_i-f_{2k+3}+2$ and $x'=\gamma_i-f_{2k+2}+1$.

	(i) Let $\max\{0,-x\}\leq \ell \leq f_{2k}-1$ and let $A_{j}$ be the $j$th row of $H_{x+\ell,f_{2k+2}-1-\ell}$.  By Theorem \ref{lem3}(ii) and Lemma \ref{lem5}, we see 
	\[A_{f_{2k+2}-1-\ell}-A_{f_{2k}-1-\ell}=\begin{pmatrix}
		(-1)^{k+1} & 0 & \dots & 0
	\end{pmatrix}.\]
	Then for $\max\{0,-x\}\leq \ell\leq f_{2k}-1$, 
	\begin{align*}
		H_{x+\ell, f_{2k+2}-1-\ell} & = \det\begin{pmatrix}
			A_1\\ A_2\\ \vdots\\ A_{f_{2k+2}-2-\ell}\\ A_{f_{2k+2}-1-\ell}
		\end{pmatrix} = \det\begin{pmatrix}
			A_1\\ A_2\\ \vdots\\ A_{f_{2k+2}-2-\ell}\\ A_{f_{2k+2}-1-\ell}-A_{f_{2k}-1-\ell}
		\end{pmatrix}\\
		& = \begin{vmatrix}
		* & M_{x+(\ell+1),f_{2k+2}-1-(\ell+1)}\\
		(-1)^{k+1}& \mathbf{0}_{1, f_{2k+2}-2-\ell}
		\end{vmatrix}\\
		& = (-1)^{k+1}(-1)^{1+f_{2k+2}-1-\ell}H_{x+(\ell+1),f_{2k+2}-1-(\ell+1)}\\
		& = (-1)^{k+\ell}H_{x+(\ell+1),f_{2k+2}-1-(\ell+1)}. \tag{by Lemma \ref{lem4}(i)}
	\end{align*}
	Applying the above equality $r$ times, one has
	\begin{align*}
		H_{x+r,f_{2k+2}-1-r}&=(-1)^{k+r-1}(-1)^{k+r-2}\cdots(-1)^{k}H_{x,f_{2k+2}-1}\\
		&=(-1)^{rk}(-1)^{\frac{r(r-1)}{2}}H_{x,f_{2k+2}-1}.
	\end{align*}

	(ii) Let $\max\{0,-x'\}\leq \ell \leq f_{2k}-1$ and let $B_j$ be the $j$th column of $H_{x'+\ell, f_{2k+2}-1-\ell}$. By Theorem \ref{lem3}(ii) and Lemma \ref{lem5}, we see 
	\[B_1-B_{1+f_{2k+1}}= \begin{pmatrix}
		0\\  \vdots\\  0 \\ (-1)^{k+1}
	\end{pmatrix}.\]
	Therefore, for $\max\{0,-x'\}\leq \ell \leq f_{2k}-1$,
	\begin{align*}
		H_{x'+\ell, f_{2k+2}-1-\ell} & = \det\begin{pmatrix}
			B_1 & B_2 & \cdots & B_{f_{2k+2}-1-\ell}
		\end{pmatrix}\\
		& = \det\begin{pmatrix}
			B_1-B_{1+f_{2k+1}} & B_2 & \cdots & B_{f_{2k+2}-1-\ell}
		\end{pmatrix}\\
		&=\det \begin{pmatrix}
		\mathbf{0}_{ f_{2k+2}-2-\ell, 1} & M_{x'+(\ell+1),f_{2k+2}-1-(\ell+1)}\\
		(-1)^{k+1}& *\\
		\end{pmatrix}\\
		 &=(-1)^{k+1}(-1)^{1+f_{2k+2}-1-\ell}H_{x'+(\ell+1),f_{2k+2}-1-(\ell+1)}\\
		& = (-1)^{k+\ell}H_{x'+(\ell+1),f_{2k+2}-1-(\ell+1)}. \tag{by Lemma \ref{lem4}(i)}
	\end{align*}
	Applying the above equality $r$ times, one has 
	\begin{align*}
	H_{x'+r,f_{2k+2}-1-r}& =(-1)^{k+r-1}(-1)^{k+r-2}\cdots(-1)^{k}H_{x',f_{2k+2}-1}\\
	&=(-1)^{rk}(-1)^{\frac{r(r-1)}{2}}H_{x',f_{2k+2}-1}.
	\end{align*}

	(iii)
	Let $\max\{0,-x\}\leq r \leq f_{2k}-1$ and let $C_j$ be the $j$th column of $M_{x+r, f_{2k+2}-1}$. Then 
	\begin{align*}
		H_{x+r,f_{2k+2}-1} & = \det\begin{pmatrix}
			C_1 & C_2 & \cdots & C_{f_{2k+2}-1}
		\end{pmatrix}\\
		& = \det\begin{pmatrix}
			C_1 & C_2 & \cdots & C_{f_{2k+2}-r-1} & C'_1 & \cdots & C'_r
		\end{pmatrix}
	\end{align*}
	where $C'_p=C_{f_{2k+2}-r-1+p}-C_{f_{2k}-r-1+p}$ for $1\le p \le r$. Note that 
	\[\begin{pmatrix}
		C'_1 & \cdots & C'_r
	\end{pmatrix} = \begin{pmatrix}
		s_{\gamma_i-f_{2k}+1} & \cdots & s_{\gamma_i-f_{2k}+r}\\
		\vdots & \ddots & \vdots\\
		s_{\gamma_i+f_{2k+1}-1} & \cdots & s_{\gamma_i+f_{2k+1}+r-2}
	\end{pmatrix}-
	\begin{pmatrix}
		s_{\gamma_i-f_{2k+2}+1} & \cdots & s_{\gamma_i-f_{2k+2}+r}\\
		\vdots & \ddots & \vdots\\
		s_{\gamma_i-1} & \cdots & s_{\gamma_i+r-2}
	\end{pmatrix}.\]
	By Lemma \ref{lem6} and Theorem \ref{lem3}(ii), for $1\leq q\leq f_{2k+2}-2$ and $f_{2k+2}+1\leq q \leq f_{2k+2}+r-2$, \[s_{\gamma_i-f_{2k}+q}=s_{\gamma_i-f_{2k+2}+q}.\] Moreover, by Lemma \ref{lem5}, $s_{\gamma_i+f_{2k+1}-1}-s_{\gamma_i-1}=(-1)^{k}$ and $s_{\gamma_i+f_{2k+1}}-s_{\gamma_i}=(-1)^{k+1}$. Then 
	\[\begin{pmatrix}
		C'_1 & \cdots & C'_r
	\end{pmatrix} = 
	\begin{pmatrix}
	\mathbf{0}_{f_{2k+2}-1-r,\, r}\\ X
	\end{pmatrix}\]
	where $X$ is the $r\times r$ matrix 
	\[\begin{pmatrix}
		0 & \cdots & 0 & (-1)^{k}\\
		\vdots & \iddots & \iddots & (-1)^{k+1}\\
		\vdots & \iddots & \iddots & \vdots\\
		(-1)^{k} & (-1)^{k+1} & \cdots & 0
	\end{pmatrix}.\]
	Therefore, 
	\begin{align*}
		H_{x+r,f_{2k+2}-1} &= \det \begin{pmatrix}
		M_{x+r,f_{2k+2}-1-r}& \mathbf{0}_{f_{2k+2}-1-r,\, r}\\
		 * & X
		\end{pmatrix}\\
		& = (-1)^{kr}(-1)^{\frac{(r-1)r}{2}}H_{x+r,f_{2k+2}-1-r}\\
		& = H_{x,f_{2k+2}-1}. \tag{by Lemma \ref{lem15}(i)}
		\end{align*}
\end{proof}
\begin{remark} \label{rem6}
	 From Proposition \ref{prop-i}(ii) and Lemma \ref{lem15}, Hankel determinants on the boundary of $T_{k,i}$ can be determined by $H_{\frac{f_{2k+3}}{2}+f_{2k}+1,f_{2k+1}}$(the lower left corner of $T_{k,2}$).
\end{remark}

\section{Evaluating the Hankel determinants}\label{sec:5}

In section \ref{sec:4}, we show that for any $k\ge 0$, to know all the determinants on the boundary of $U_{k,i}$ (resp. $V_{k,i}$, $T_{k,i}$) for all $i$, it is enough to know the value of one determinant on the boundary $U_{k,i}$ (resp. $V_{k,i}$ or $T_{k,i}$) for some $i$. In this section, for certain $i$, we shall give the expression of a determinant on  the boundary $U_{k,i}$ (resp. $V_{k,i}$ or $T_{k,i}$) for all $k$.

The next result allows us to determine the determinant on the lower left corner of $U_{k,i}$ by using the determinants on the boundary of $U_{k-1, *}$ and $T_{k-1, *}$. 
\begin{lemma} \label{lem16}
	\emph{(Lower left corner of $U_{k,i}$)} For all $k\ge 1$ and $i\ge 1$,  
	\[H_{\alpha_i-f_{2k+3}+1,\,f_{2k}}=(-1)^{k}(H_{\alpha_i-f_{2k+3}+2,\,f_{2k}-1}-H_{\alpha_i-f_{2k+3}+1,\,f_{2k}-1}).\]
\end{lemma}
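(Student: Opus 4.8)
The plan is to turn the order-$f_{2k}$ determinant on the left into two determinants of order $f_{2k}-1$ by a single multilinear step in the last column, at the cost of one auxiliary order-$f_{2k}$ determinant that must then be identified separately. Throughout write $y=\alpha_i-f_{2k+3}$ and $n=f_{2k}$, so that the claim reads $H_{y+1,\,n}=(-1)^k\bigl(H_{y+2,\,n-1}-H_{y+1,\,n-1}\bigr)$. First I would record the local structure of $\mathbf{s}$ around $y$. Since $\alpha_i\in E'_{k+1}$ we have $\Phi_{k+1}(y)=\tfrac{f_{2k+1}}{2}$, hence $y\in F'_k\subseteq F_k$ and $\Phi_k(y)=\tfrac{f_{2k+1}}{2}$; by Theorem \ref{lem3}(i) this gives $s_{y+\ell}=s_{y+\ell+f_{2k}}$ for all $1\le\ell\le f_{2k+3}-2$ (which covers the whole index window of every matrix below) while $s_{y+f_{2k}}\neq s_y$. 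Because $\Phi_k(y)=\tfrac{f_{2k+1}}{2}$ forces $a_0(y)=a_0(\tfrac{f_{2k+1}}{2})$, Proposition \ref{prop:2} and Lemma \ref{lem5} yield $s_y=s_{f_{2k+1}/2}$, so $s_{y+f_{2k}}-s_y=1-2s_y=(-1)^k$. Finally $n=f_{2k}$ is odd by Lemma \ref{lem4}(i).

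The engine is then the following. In $M=M_{y+1,n}$ denote the columns by $C_0,\dots,C_{n-1}$ and introduce the virtual column $C_{-1}:=(s_{y+i})_{i=0}^{n-1}$. The local structure gives $C_{n-1}=C_{-1}+(-1)^k e_0$, since row $0$ of $C_{n-1}$ is $s_{y+n}=s_y+(-1)^k$ and row $i\ge1$ is $s_{y+n+i}=s_{y+i}$. Expanding $\det M$ by multilinearity in the last column splits it into $\det[C_0,\dots,C_{n-2},C_{-1}]+(-1)^k\det[C_0,\dots,C_{n-2},e_0]$. Cyclically moving $C_{-1}$ to the front (sign $(-1)^{n-1}$) turns the first matrix into the Hankel matrix $M_{y,n}$, while expanding the second along $e_0$ leaves $(-1)^{n-1}H_{y+2,n-1}$. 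Using that $n$ is odd I obtain the clean reduction
\[
H_{y+1,\,n}=H_{y,\,n}+(-1)^k H_{y+2,\,n-1}. \tag{$\star$}
\]
The same identity drops out of the symmetric operation on the last row, which is a useful consistency check.

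It then remains to prove the single relation
\[
H_{y,\,n}=(-1)^{k+1}H_{y+1,\,n-1}, \tag{$\star\star$}
\]
after which $(\star)$ gives $H_{y+1,n}=(-1)^k(H_{y+2,n-1}-H_{y+1,n-1})$; substituting $y+1=\alpha_i-f_{2k+3}+1$ is exactly the assertion. Note that $H_{y+1,n-1}$ is precisely the minor of $M_{y,n}$ obtained by deleting its first row and last column, so $(\star\star)$ is a reduction of $M_{y,n}$ onto a corner minor.

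The hard part will be $(\star\star)$. The reason is that $M_{y+1,n}$ is anti-periodic with period exactly $n$ and carries \emph{no} defect inside its window (the inequality $s_{y+f_{2k}}\neq s_y$ lives at index $y$, just outside $M_{y+1,n}$), which is what lets the multilinear step peel off one column cleanly; by contrast $M_{y,n}$ straddles that seam, and the single defect $s_{y+f_{2k}}=s_y+(-1)^k$ now spreads over an entire anti-diagonal $\{i+j=n\}$ of $M_{y,n}$. Consequently any naive multilinear expansion of $M_{y,n}$ in a row or column merely reproduces $(\star)$ and is circular. To break this I would either (i) induct on $k$, observing that $(y,f_{2k})$ lies on the right edge of a level-$(k-1)$ parallelogram $U_{k-1,\ast}$ and $(y+1,f_{2k}-1)$ on the top edge of $T_{k-1,\ast}$, both already controlled by Lemma \ref{lem13} and the edge propagation of Section \ref{sec:4} at level $k-1$, and match the two edge values through $(\star\star)$; or (ii) exploit the reflection symmetry of $\mathbf{s}$ coming from the palindromicity of the blocks $\tau^{k}(1)$ to produce an extra symmetry of $M_{y,n}$ relating it to its corner minor $M_{y+1,n-1}$. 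Pinning down this seam-straddling determinant $H_{y,f_{2k}}$ is the main obstacle; everything else is the bookkeeping in $(\star)$.
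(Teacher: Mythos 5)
Your reduction $(\star)$ is correct: the column identity $C_{n-1}=C_{-1}+(-1)^k e_0$, the sign bookkeeping (using that $f_{2k}$ is odd), and the evaluation $s_{y+f_{2k}}-s_y=(-1)^k$ all check out, and $(\star)$ together with $(\star\star)$ is indeed equivalent to the lemma. But as written the proposal is not a proof: you leave $(\star\star)$, i.e.\ $H_{y,f_{2k}}=(-1)^{k+1}H_{y+1,f_{2k}-1}$, explicitly unproved, offering only two unexecuted strategy sketches. That is a genuine gap. For the record, your strategy (i) does work, and more easily than you suggest: since $\Phi_{k}(\alpha_i-f_{2k+2})=\frac{f_{2k+3}}{2}$, the number $\alpha_i-f_{2k+2}=y+f_{2k}$ lies in $E'_k$, which is exactly the set of right edges at level $k-1$; hence \emph{both} $(y,f_{2k})$ and $(y+1,f_{2k}-1)$ sit on the right edge of the same parallelogram $U_{k-1,j'}$, at consecutive parameters $r=f_{2k-2}-1$ and $r=f_{2k-2}$. (Your placement of $(y+1,f_{2k}-1)$ on the top edge of a $T_{k-1,\ast}$ is incorrect: $\Phi_{k-1}(y+f_{2k})=\frac{f_{2k-1}}{2}\neq\frac{f_{2k+1}}{2}$, and by Proposition \ref{lem20} the point belongs to $U_{k-1,j'}$ only.) Lemma \ref{lem13}(i) applied at level $k-1$ then gives the ratio $(-1)^{k-1}(-1)^{f_{2k-2}-1}=(-1)^{k+1}$, which is precisely $(\star\star)$; no induction is needed since Lemma \ref{lem13} is available at every level before Lemma \ref{lem16} is proved.

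The paper takes a different, self-contained route that never meets the seam-straddling determinant $H_{y,f_{2k}}$. It exploits the finer periodicity coming from $y+f_{2k}\in F_{k-1}$: one has $s_{y+\ell}=s_{y+\ell+f_{2k-2}}$ for $1\le\ell\le f_{2k}-2$, with defects $(-1)^{k+1}$ and $(-1)^{k}$ exactly at $\ell=f_{2k}-1$ and $\ell=f_{2k}$. Replacing the first column $A_1$ of $M_{y+1,f_{2k}}$ by $A_1-A_{1+f_{2k-2}}$ therefore yields a column supported on its last two entries; a single cofactor expansion produces $H_{y+2,f_{2k}-1}$ from one minor, while the other minor becomes $H_{y+1,f_{2k}-1}$ after a cyclic row shift justified by the same shift-by-$f_{2k}$ periodicity you use. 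If you carry out $(\star\star)$ as indicated above, your argument becomes a valid alternative; as submitted, it is incomplete.
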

\begin{proof}
	Let $y=\alpha_i-f_{2k+3}$ and let $A_j$ be the $j$th column of $H_{y+1,f_{2k}}$. Then 
\begin{align*}
	H_{y+1,f_{2k}} & =\begin{vmatrix}
		s_{y+1} & s_{y+2} & \cdots & s_{y+f_{2k}}\\
		\vdots & \vdots & \iddots & \vdots\\
		s_{y+f_{2k}} & s_{y+f_{2k}+1} & \cdots & s_{y+2f_{2k}-1}
	\end{vmatrix}
	 = \det \begin{pmatrix}
		A_1 & A_2 & \cdots & A_{f_{2k}-1}
	\end{pmatrix}.
\end{align*}

Recall that $\alpha_i\in E'_{k+1}$. Then $\Phi_{k+1}(y)=\frac{f_{2k+1}}{2}$ and $\Phi_{k-1}(y+f_{2k})=\frac{f_{2k-1}}{2}$. This implies $y\in F'_k$ and $y+f_{2k}\in F_{k-1}$. By Lemma \ref{lem7} and Theorem \ref{lem3}(i), the fact $y+f_{2k}\in F_{k-1}$ yields that $s_{y+\ell}=s_{y+f_{2k-2}+\ell}$ for $1\leq \ell \leq f_{2k}-2$. By Lemma \ref{lem5}, $s_{y+f_{2k}-1}-s_{y+f_{2k}+f_{2k-2}-1}=(-1)^{k+1}$ and $s_{y+f_{2k}}-s_{y+f_{2k}+f_{2k-2}}=(-1)^{k}$. So 
	\begin{align}
		H_{y+1,f_{2k}} & = \det \begin{pmatrix}
			A_1-A_{1+f_{2k-2}} & A_2 & \cdots & A_{f_{2k}-1}
		\end{pmatrix}\nonumber\\
		& =\begin{vmatrix}
			0 & s_{y+2} & \cdots & s_{y+f_{2k}}\\
			\vdots & \vdots & \iddots & \vdots\\
			0 & s_{y+f_{2k}-1} & \cdots & s_{y+2f_{2k}-3}\\ 
			(-1)^{k+1} & s_{y+f_{2k}} & \cdots & s_{y+2f_{2k}-2}\\ 
			(-1)^{k} & s_{y+f_{2k}+1} & \cdots & s_{y+2f_{2k}-1}
		\end{vmatrix}\nonumber\\
		& = (-1)^{k}(-1)^{1+f_{2k}} H_{y+2, f_{2k}-1} + (-1)^{k+1}(-1)^{f_{2k}}X 
		\label{eq:lem17-1}
	\end{align}
	where \[X=\begin{vmatrix}
		 s_{y+2} & \cdots & s_{y+f_{2k}}\\
		 \vdots & \iddots & \vdots\\
		 s_{y+f_{2k}-1} & \cdots & s_{y+2f_{2k}-3}\\  
		 s_{y+f_{2k}+1} & \cdots & s_{y+2f_{2k}-1}
	\end{vmatrix}=(-1)^{f_{2k}-2}\begin{vmatrix}
		s_{y+f_{2k}+1} & \cdots & s_{y+2f_{2k}-1}\\
		s_{y+2} & \cdots & s_{y+f_{2k}}\\
		\vdots & \iddots & \vdots\\
		s_{y+f_{2k}-1} & \cdots & s_{y+2f_{2k}-3}
   \end{vmatrix}.\]
	Since $y+f_{2k}\in F'_{k}$, by Lemma \ref{lem7} and Theorem \ref{lem3}(i),  we see \[\begin{pmatrix}
		s_{y+f_{2k}+1} & \cdots & s_{y+2f_{2k}-1}
	\end{pmatrix}=\begin{pmatrix}
		s_{y+1} & \cdots & s_{y+f_{2k}-1}
	\end{pmatrix}\]
	and $X=(-1)^{f_{2k}-2}H_{y+1,f_{2k}-1}$. Then the result follows from Eq.~\eqref{eq:lem17-1} and Lemma \ref{lem4}.
\end{proof}

Now we show how to obtain the determinant on the lower left corner of $T_{k,i}$ by using determinants on the boundary of $U_{k,i-1}$ and $U_{k-1,i+1}$.
\begin{lemma} \label{lem17}
	\emph{(Lower left corner of $T_{k,2})$} For all $k\ge 1$ and $i\ge 2$, 
	\[H_{\gamma_i-f_{2k+2}+1,\,f_{2k+1}}=(-1)^{k}(H_{\gamma_i-f_{2k+2}+2,\, f_{2k+1}-1}+H_{\gamma_i-f_{2k+2}+1,\,f_{2k+1}-1}).\]
\end{lemma}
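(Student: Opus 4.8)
The plan is to mirror the proof of Lemma \ref{lem16}, replacing the role of the bottom edge $n=f_{2k}$ of $U_{k,i}$ by the bottom edge $n=f_{2k+1}$ of $T_{k,i}$. Write $x=\gamma_i-f_{2k+2}+1$, so the target is $H_{x,\,f_{2k+1}}$ and the two determinants on the right are $H_{x+1,\,f_{2k+1}-1}$ and $H_{x,\,f_{2k+1}-1}$. Since $\gamma_i\in E'_k$ we have $\Phi_k(\gamma_i)=\frac{f_{2k+3}}{2}$, and because $\frac{f_{2k+3}}{2}-f_{2k}=\frac{f_{2k+1}}{2}$ (from Eq.~\eqref{eq:f-rec}) one checks that $\Phi_k(\gamma_i-f_{2k})=\frac{f_{2k+1}}{2}$, i.e. $\gamma_i-f_{2k}\in F_k$. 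The hypothesis $i\ge 2$ guarantees, via Lemma \ref{lem6}, that $\gamma_i-f_{2k+2}\in E_k$ and is the element of $E_k$ immediately preceding $\gamma_i$; this is what confines the relevant ``bad'' positions to the boundary of the matrix.

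First I would perform a column operation on $M_{x,\,f_{2k+1}}$. Let $A_1,\dots,A_{f_{2k+1}}$ denote its columns and replace $A_1$ by $A_1-A_{1+f_{2k}}$ (legitimate since $f_{2k}<f_{2k+1}$); the $i$-th entry becomes $s_{x+i}-s_{x+i+f_{2k}}$, which by Theorem \ref{lem3}(i) vanishes unless $\Phi_k(x+i)\in\{\frac{f_{2k+1}}{2},\frac{f_{2k+1}}{2}-1\}$. Because the elements of $F_k$ are spaced by $f_{2k+2}$ or $f_{2k+3}$ (both exceeding $f_{2k+1}$), the only such indices in the window $[x,\,x+f_{2k+1}-1]=[\gamma_i-f_{2k+2}+1,\,\gamma_i-f_{2k}]$ are $\gamma_i-f_{2k}-1$ and $\gamma_i-f_{2k}$, i.e. the last two rows. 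Using Proposition \ref{prop:2} and Lemma \ref{lem5} to evaluate $s_{\gamma_i-f_{2k}}-s_{\gamma_i}$ and $s_{\gamma_i-f_{2k}-1}-s_{\gamma_i-1}$, the new first column is $(0,\dots,0,(-1)^k,(-1)^{k+1})^{\mathsf{T}}$, with the two nonzero entries in rows $f_{2k+1}-2$ and $f_{2k+1}-1$.

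Next I would expand the resulting determinant along its first column. Only the last two rows contribute, and since $f_{2k+1}$ is even (Lemma \ref{lem4}(ii)) the two cofactor signs combine so that
\[H_{x,\,f_{2k+1}}=(-1)^k\bigl(M_{f_{2k+1}-2}+M_{f_{2k+1}-1}\bigr),\]
where $M_{f_{2k+1}-1}$ is exactly $H_{x+1,\,f_{2k+1}-1}$, while $M_{f_{2k+1}-2}$ is the minor obtained by deleting the first column and the row $f_{2k+1}-2$. As in Lemma \ref{lem16}, I would turn this minor into a genuine Hankel determinant by cyclically moving its last row to the top; the sign of this permutation is $(-1)^{f_{2k+1}-2}=+1$. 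The moved row consists of the entries $s_{\gamma_i-f_{2k}+1+c}$, and applying the shift-by-$f_{2k+1}$ identity of Theorem \ref{lem3}(ii) turns it into $s_{x+c}$, so that the rearranged minor equals $M_{x,\,f_{2k+1}-1}$, i.e. $M_{f_{2k+1}-2}=H_{x,\,f_{2k+1}-1}$. Substituting gives the claimed identity.

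The main obstacle is the careful bookkeeping of truncated $f$-representations needed twice: (a) to guarantee that the column operation produces zeros in all but the bottom two rows, and (b) to guarantee that $s_{n+f_{2k+1}}=s_n$ for every $n$ in the window $[\gamma_i-f_{2k+2}+1,\,\gamma_i-f_{2k}-1]$, which is what the row-rearrangement step needs. Both reduce to checking that, apart from the two boundary indices, $\Phi_k(n)$ avoids the exceptional sets of Theorem \ref{lem3}; here the fact that $\gamma_i-f_{2k+2}$ and $\gamma_i$ are consecutive in $E_k$ (Lemma \ref{lem6}) keeps all exceptional positions at or outside the endpoints of the window. The sign computations (the entries $(-1)^k$ and $(-1)^{k+1}$, together with the parity of $f_{2k+1}$) are routine but must be tracked precisely, since they are exactly what converts the minus sign of Lemma \ref{lem16} into the plus sign appearing here.
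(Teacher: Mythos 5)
Your proposal is correct and follows essentially the same route as the paper's proof: the column operation $A_1\mapsto A_1-A_{1+f_{2k}}$, the identification (via Lemma \ref{lem7} and Theorem \ref{lem3}(i)) of the two nonzero entries $(-1)^k,(-1)^{k+1}$ in the last two rows, the expansion along the first column using the parity of $f_{2k+1}$, and the conversion of the remaining minor into $H_{\gamma_i-f_{2k+2}+1,\,f_{2k+1}-1}$ by cycling its last row to the top and applying the shift-by-$f_{2k+1}$ identity of Theorem \ref{lem3}(ii) (valid since $\gamma_i-f_{2k+2}\in E_k$ by Lemma \ref{lem6}). The only differences are notational (you index from $x=\gamma_i-f_{2k+2}+1$ rather than $y=\gamma_i-f_{2k+2}$), and the sign bookkeeping matches the paper's.
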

\begin{proof}
	Let $y=\gamma_i-f_{2k+2}$ and let $A_j$ be the $j$th column of $H_{y+1,f_{2k+1}}$. Then 
\begin{align*}
	H_{y+1,f_{2k+1}} & =\begin{vmatrix}
		s_{y+1} & s_{y+2} & \cdots & s_{y+f_{2k+1}}\\
		\vdots & \vdots & \iddots & \vdots\\
		s_{y+f_{2k+1}} & s_{y+f_{2k+1}+1} & \cdots & s_{y+2f_{2k+1}-1}
	\end{vmatrix}
	 = \det \begin{pmatrix}
		A_1 & A_2 & \cdots & A_{f_{2k+1}-1}
	\end{pmatrix}.
\end{align*}

Recall that $\gamma_i\in E'_{k}$. By Lemma \ref{lem6}, $y\in E_k$ and $\Phi_{k}(y+f_{2k+1})=\frac{f_{2k+1}}{2}$. This implies $y+f_{2k+1}\in F_{k}$. By Lemma \ref{lem7} and Theorem \ref{lem3}(i), the fact $y+f_{2k+1}\in F_{k}$ yields that $s_{y+\ell}=s_{y+f_{2k}+\ell}$ for $1\leq \ell \leq f_{2k+1}-2$. By Lemma \ref{lem5}, $s_{y+f_{2k+1}-1}-s_{y+f_{2k+1}+f_{2k}-1}=(-1)^{k}$ and $s_{y+f_{2k+1}}-s_{y+f_{2k+1}+f_{2k}}=(-1)^{k+1}$. So 
	\begin{align}
		H_{y+1,f_{2k+1}} & = \det \begin{pmatrix}
			A_1-A_{1+f_{2k}} & A_2 & \cdots & A_{f_{2k+1}-1}
		\end{pmatrix}\nonumber\\
		& =\begin{vmatrix}
			0 & s_{y+2} & \cdots & s_{y+f_{2k+1}}\\
			\vdots & \vdots & \iddots & \vdots\\
			0 & s_{y+f_{2k+1}-1} & \cdots & s_{y+2f_{2k+1}-3}\\ 
			(-1)^{k} & s_{y+f_{2k+1}} & \cdots & s_{y+2f_{2k+1}-2}\\ 
			(-1)^{k+1} & s_{y+f_{2k+1}+1} & \cdots & s_{y+2f_{2k+1}-1}
		\end{vmatrix}\nonumber\\
		& = (-1)^{k+1}(-1)^{1+f_{2k+1}} H_{y+2, f_{2k+1}-1} + (-1)^{k}(-1)^{f_{2k+1}}X 
		\label{eq:lem18-1}
	\end{align}
	where \[X=\begin{vmatrix}
		 s_{y+2} & \cdots & s_{y+f_{2k+1}}\\
		 \vdots & \iddots & \vdots\\
		 s_{y+f_{2k+1}-1} & \cdots & s_{y+2f_{2k+1}-3}\\  
		 s_{y+f_{2k+1}+1} & \cdots & s_{y+2f_{2k+1}-1}
	\end{vmatrix}=(-1)^{f_{2k+1}-2}\begin{vmatrix}
		s_{y+f_{2k+1}+1} & \cdots & s_{y+2f_{2k+1}-1}\\
		s_{y+2} & \cdots & s_{y+f_{2k+1}}\\
		\vdots & \iddots & \vdots\\
		s_{y+f_{2k+1}-1} & \cdots & s_{y+2f_{2k+1}-3}
   \end{vmatrix}.\]
	Since $y\in E_{k}$, by Lemma \ref{lem6} and Theorem \ref{lem3}(ii),  we see \[\begin{pmatrix}
		s_{y+f_{2k+1}+1} & \cdots & s_{y+2f_{2k+1}-1}
	\end{pmatrix}=\begin{pmatrix}
		s_{y+1} & \cdots & s_{y+f_{2k+1}-1}
	\end{pmatrix}\]
	and $X=(-1)^{f_{2k+1}-2}H_{y+1,f_{2k+1}-1}$. Then the result follows from Eq.~\eqref{eq:lem18-1} and Lemma \ref{lem4}.
\end{proof}

Now We are able to give the exact value of the Hanker determinant on the upper right corner of $U_{k,i}$, and hence we know all the determinants on the boundary of $U_{k,i}$.
\begin{theorem} \label{lem18} 
	\emph{(Upper right corner of $U_{k,1}$)} 
	Let $k\ge 1$. Then \[H_{\frac{f_{2k+1}}{2}+1,\,f_{2k+3}-1}=(-1)^{k+1}\frac{f_{2k+1}}{2}.\]
\end{theorem}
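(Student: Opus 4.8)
The plan is to argue by induction on $k$, where the natural induction couples this statement with the analogous value at the lower-left corner of $T_{k-1,2}$; I will indicate where that coupling is forced. Throughout I would use the identity $\frac{f_{2k+3}}{2}=\frac{f_{2k+1}}{2}+f_{2k}$, which follows from \eqref{eq:f-rec} (equivalently from \eqref{eq:f-2-odd}), together with $\frac{f_{2k+5}}{2}=f_{2k+3}+\frac{f_{2k+1}}{2}$; the latter gives $\alpha_1-f_{2k+3}+1=\frac{f_{2k+1}}{2}+1$, so that the point in the statement is exactly the upper-right corner of $U_{k,1}$.

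First I would connect the target determinant to the bottom edge of $U_{k,1}$. Running Lemma \ref{lem13}(i) down the right edge from $r=0$ to $r=f_{2k+2}-1$ reaches the bottom-right corner $H_{\alpha_1-f_{2k},\,f_{2k}}$, and Proposition \ref{prop-i}(i) identifies this with the common bottom-edge value $H_{\frac{f_{2k+1}}{2}+1,\,f_{2k}}$ (the lower-left corner). Thus the target equals an explicit sign (from Lemma \ref{lem13}(i), collapsed via Lemma \ref{lem4}) times $H_{\frac{f_{2k+1}}{2}+1,\,f_{2k}}$. Next, Lemma \ref{lem16} rewrites this lower-left corner as $(-1)^k\bigl(H_{\frac{f_{2k+1}}{2}+2,\,f_{2k}-1}-H_{\frac{f_{2k+1}}{2}+1,\,f_{2k}-1}\bigr)$, reducing everything to two determinants at height $f_{2k}-1=f_{2(k-1)+2}-1$.

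The crux is to evaluate those two determinants at level $k-1$. Using $\frac{f_{2k+3}}{2}=\frac{f_{2k+1}}{2}+f_{2k}$ one checks that $P_1:=H_{\frac{f_{2k+1}}{2}+1,\,f_{2k}-1}$ has $n+m=\frac{f_{2k+3}}{2}=\alpha^{(k-1)}_1$ and hence lies on the right edge of $U_{k-1,1}$; Lemma \ref{lem13}(i) at level $k-1$ (with $r=f_{2k-2}$) expresses it through the upper-right corner $H_{\frac{f_{2k-1}}{2}+1,\,f_{2k+1}-1}$, whose value is $(-1)^{k}\frac{f_{2k-1}}{2}$ by the induction hypothesis (Theorem \ref{lem18} at $k-1$). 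By contrast $P_2:=H_{\frac{f_{2k+1}}{2}+2,\,f_{2k}-1}$ has $n+m=\frac{f_{2k+3}}{2}+1$ and sits on the upper edge of $T_{k-1,2}$; Lemma \ref{lem15} relates it to the lower-left corner $H_{\frac{f_{2k+1}}{2}+f_{2k-2}+1,\,f_{2k-1}}$ of $T_{k-1,2}$. It is precisely $P_2$ that cannot be resolved from the $U$-statement alone, so the induction must carry the value of this $T$-corner in parallel (equivalently, one invokes the companion evaluation of the lower-left corner of $T_{k-1,2}$).

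Finally I would substitute the two evaluations, combine the accumulated signs — all of which reduce, via Lemma \ref{lem4}, to parities of $f_{2k},f_{2k+1},f_{2k+2}$ and the triangular exponents produced by Lemmas \ref{lem13} and \ref{lem15} — using Lemma \ref{lem5} where the explicit values of $s$ at $\frac{f_{2\ell+1}}{2}$ and $\frac{f_{2\ell+1}}{2}-1$ enter. The base case $k=1$ is a direct evaluation of the $9\times 9$ determinant $H_{3,9}=2=(-1)^{2}\frac{f_3}{2}$ from the prefix of $\mathbf{s}$, after which the induction runs for $k\ge 2$. I expect two difficulties: conceptually, recognizing that the $P_2$ term forces a simultaneous induction with the $T$-corner rather than a standalone argument; and computationally, the sign bookkeeping, where the main risk is a parity slip when collapsing the products $(-1)^{rk}(-1)^{r(r-1)/2}$ coming from the three edge-traversals and the final assembly.
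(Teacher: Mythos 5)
Your skeleton coincides with the paper's own proof: descend the right edge of $U_{k,1}$ via Lemma \ref{lem13}(i) to the bottom corner, identify it with $H_{\frac{f_{2k+1}}{2}+1,\,f_{2k}}$ by Proposition \ref{prop-i}(i), apply Lemma \ref{lem16}, and recognize $P_1$ as a right-edge point of $U_{k-1,1}$, hence a signed multiple of $h_{k-1}:=H_{\frac{f_{2k-1}}{2}+1,\,f_{2k+1}-1}$, and $P_2$ as a point governed by the lower-left corner of $T_{k-1,2}$. The gap sits exactly at the step you flag as the crux: you never evaluate that $T$-corner, you only postulate that it be ``carried in parallel.'' Invoking the companion statement (Corollary \ref{lem19} at level $k-1$) as a black box would be circular, since its proof in the paper is deduced from Theorem \ref{lem18}. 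The missing ingredient is Lemma \ref{lem17}: applied at level $k-1$ it converts the lower-left corner $H_{\frac{f_{2k+1}}{2}+f_{2k-2}+1,\,f_{2k-1}}$ of $T_{k-1,2}$ into $(-1)^{k-1}\bigl(H_{\frac{f_{2k+1}}{2}+f_{2k-2}+2,\,f_{2k-1}-1}+H_{\frac{f_{2k+1}}{2}+f_{2k-2}+1,\,f_{2k-1}-1}\bigr)$, and at height $f_{2k-1}-1$ both terms are again $h$-values: the first is $h_{k-2}$ by Lemma \ref{lem13}(iii) and Proposition \ref{prop-i}, the second lies on the right edge of $U_{k-1,1}$ with $r=2f_{2k-2}$ and equals $-h_{k-1}$ by Lemma \ref{lem13}(i) and Lemma \ref{lem4}. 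This closes the recursion entirely within the $U$-corners, yielding $h_k=h_{k-2}-2h_{k-1}$ with no auxiliary induction hypothesis on the $T$-corner; one then checks this recurrence against $(-1)^{k+1}\frac{f_{2k+1}}{2}$ using $f_{2k+1}=f_{2k-3}+2f_{2k-1}$.

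Two further defects. First, since the recursion reaches back two levels (through $h_{k-2}$, whether you route it through the $T$-corner or not), the single base case $k=1$ is insufficient; you need $k=1$ and $k=2$, and the paper verifies both before running the recurrence for $k\ge 3$. Second, the final assembly is not only sign bookkeeping: the factor $\frac{f_{2k+1}}{2}$ emerges precisely from the arithmetic identity above relating $f_{2k+1}$, $f_{2k-1}$ and $f_{2k-3}$, so that verification is a substantive part of the proof rather than a routine substitution.
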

\begin{proof}
We can check directly that the result holds for $k=1,2$. Now suppose $k\ge 3$. Let $h_{k}=H_{\frac{f_{2k+1}}{2}+1,\,f_{2k+3}-1}$. Then 
\begin{align*}
	h_{k} & = (-1)^{(f_{2k+2}-1)k}(-1)^{\frac{(f_{2k+2}-1)(f_{2k+2}-2)}{2}} H_{\frac{f_{2k+5}}{2}-f_{2k},f_{2k}}\tag{by Lemma \ref{lem13}(i)}\\
	& = (-1)^{\frac{f_{2k+2}-1}{2}} H_{\frac{f_{2k+1}}{2}+1,f_{2k}} \tag{by Lemma \ref{lem4} and Lemma \ref{lem9}(ii)}\\
	& = (-1)^{\frac{f_{2k+2}-1}{2}}(-1)^{k}(H_{\frac{f_{2k+1}}{2}+2,\,f_{2k}-1}-H_{\frac{f_{2k+1}}{2}+1,\,f_{2k}-1})\tag{by Lemma \ref{lem16}}.
\end{align*}
Applying Lemma \ref{lem13}(i) for $k-1$\footnote{We need to mention that $\alpha_i$'s depend also on $k$. For example, $\alpha_1^{(k-1)}=\frac{f_{2k+3}}{2}$ and $\alpha_1^{(k)}=\frac{f_{2k+5}}{2}$.} and $r=f_{2k-2}$, 
\begin{align*}
	H_{\frac{f_{2k+1}}{2}+1,\,f_{2k}-1}
	& =  (-1)^{f_{2k-2}(k-1)}(-1)^{\frac{f_{2k-2}(f_{2k-2}-1)}{2}}H_{\frac{f_{2k-1}}{2}+1,\,f_{2k+1}-1}\\
	& = (-1)^{k-1}(-1)^\frac{f_{2k-2}-1}{2}h_{k-1}.\tag{by Lemma \ref{lem4}}
\end{align*}
Applying  Lemma \ref{lem15}(i) for $k-1$ and $r=f_{2k-2}$,
\begin{align*}
	H_{\frac{f_{2k+1}}{2}+2,\,f_{2k}-1}
	& = (-1)^{(f_{2k-2}-1)(k-1)}(-1)^{\frac{(f_{2k-2}-1)(f_{2k-2}-2)}{2}}H_{\frac{f_{2k+1}}{2}+f_{2k-2}+1,\,f_{2k-1}}\\
	& = (-1)^{\frac{f_{2k-2}-1}{2}}H_{\frac{f_{2k+1}}{2}+f_{2k-2}+1,\,f_{2k-1}}\tag{by Lemma \ref{lem4}}\\
	& = (-1)^{\frac{f_{2k-2}-1}{2}}(-1)^{k-1}\left(H_{\frac{f_{2k+1}}{2}+f_{2k-2}+2,\,f_{2k-1}-1}+H_{\frac{f_{2k+1}}{2}+f_{2k-2}+1,\,f_{2k-1}-1}\right)\tag{by Lemma \ref{lem17}}\\
	& = (-1)^{\frac{f_{2k-2}-1}{2}}(-1)^{k-1}\left(h_{k-2}+H_{\frac{f_{2k+1}}{2}+f_{2k-2}+1,\,f_{2k-1}-1}\right) \tag{by Lemma \ref{lem13}(iii)}\\
	& = (-1)^{\frac{f_{2k-2}-1}{2}}(-1)^{k-1}\left(h_{k-2}-h_{k-1}\right). \tag{by Lemma \ref{lem13}(i) and Lemma \ref{lem4}}
\end{align*}
Combing previous equations, we have 
\begin{align*}
	h_{k} & =  (-1)^{\frac{f_{2k+2}-1}{2}}(-1)^{k}(H_{\alpha_1-f_{2k+3}+2,\,f_{2k}-1}-H_{\alpha_1-f_{2k+3}+1,\,f_{2k}-1})\\
	& = (-1)^{\frac{f_{2k+2}-1}{2}}(-1)^{k}(-1)^{\frac{f_{2k-2}-1}{2}}(-1)^{k-1}(h_{k-2}-2h_{k-1})\\
	& = h_{k-2}-2h_{k-1}. \tag{by Lemma \ref{lem4}}
\end{align*}
The initial values are $h_1=2$, $h_2=-5$. The result follows from the recurrence relation of $h_k$ and its initial values.
\end{proof}

\begin{corollary}\label{coro:1}
	\emph{(Lower left corner of $V_{k,1}$)} For all $k\ge 1$, \[H_{\frac{f_{2k+1}}{2}+f_{2k+3}+1,\,f_{2k}}=(-1)^{k+\frac{f_{2k+2}+1}{2}}\frac{f_{2k+1}}{2}.\]
\end{corollary}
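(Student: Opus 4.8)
The plan is to realize the target quantity $H_{\frac{f_{2k+1}}{2}+f_{2k+3}+1,\,f_{2k}}$ as a determinant on the bottom edge of $U_{k,1}$, transport it up the right edge of $U_{k,1}$ to the upper right corner, and then invoke Theorem \ref{lem18}. In other words, the corollary is a bridging statement: the real work has already been done in Proposition \ref{prop-i}(i), Lemma \ref{lem13}(i), and Theorem \ref{lem18}, and what remains is to identify the correct indices and track the sign.

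First I would pin down the relevant indices. Since $\frac{f_{2k+1}}{2}$ has $f$-representation with largest index $2k-1$ (by Eq.~\eqref{eq:f-odd} and Eq.~\eqref{eq:f-even}), its $\Phi_{k+1}$ and $\Phi_k$ coincide, so $\frac{f_{2k+1}}{2}\in F'_k$; hence Lemma \ref{lem7} gives $y^{(k)}_2=\frac{f_{2k+1}}{2}+f_{2k+3}$, which is the first element $\beta'_1$ of $F''_k$. Thus $\beta'_1+1=\frac{f_{2k+1}}{2}+f_{2k+3}+1$, so the target determinant is exactly $H_{\beta'_1+1,\,f_{2k}}$, the lower left corner of $V_{k,1}$. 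Likewise $\alpha_1=\frac{f_{2k+5}}{2}$ is the first element of $E'_{k+1}$, and using $\frac{f_{2k+5}}{2}=f_{2k+3}+\frac{f_{2k+1}}{2}$ (from Eq.~\eqref{eq:f-rec}) one gets $\alpha_1-f_{2k+3}=\frac{f_{2k+1}}{2}$, so the upper right corner of $U_{k,1}$ is $H_{\frac{f_{2k+1}}{2}+1,\,f_{2k+3}-1}$.

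Next I would chain three equalities. By Proposition \ref{prop-i}(i) with $i=1$ and $r=1$, we have $H_{\beta'_1+1,\,f_{2k}}=H_{\alpha_1-f_{2k},\,f_{2k}}$, the bottom right corner of $U_{k,1}$. Applying Lemma \ref{lem13}(i) with $i=1$ and $r=f_{2k+2}-1$, and noting $\alpha_1-f_{2k+3}+f_{2k+2}=\alpha_1-f_{2k}$ together with $f_{2k+3}-f_{2k+2}=f_{2k}$, this bottom right value equals $(-1)^{(f_{2k+2}-1)k}(-1)^{\frac{(f_{2k+2}-1)(f_{2k+2}-2)}{2}}H_{\frac{f_{2k+1}}{2}+1,\,f_{2k+3}-1}$. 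Finally Theorem \ref{lem18} evaluates the upper right corner as $(-1)^{k+1}\frac{f_{2k+1}}{2}$, so that
\[H_{\frac{f_{2k+1}}{2}+f_{2k+3}+1,\,f_{2k}}=(-1)^{(f_{2k+2}-1)k+\frac{(f_{2k+2}-1)(f_{2k+2}-2)}{2}+k+1}\cdot\frac{f_{2k+1}}{2}.\]

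The remaining step is a parity computation, which I expect to be the only delicate point. Since every $f_{2k+2}$ is odd by Lemma \ref{lem4}(i), the factor $f_{2k+2}-1$ is even, so $(f_{2k+2}-1)k\equiv 0\pmod 2$; writing $f_{2k+2}-1=2a$ gives $\frac{(f_{2k+2}-1)(f_{2k+2}-2)}{2}=a(f_{2k+2}-2)\equiv a=\frac{f_{2k+2}-1}{2}\pmod 2$ because $f_{2k+2}-2$ is odd. Hence the exponent reduces modulo $2$ to $\frac{f_{2k+2}-1}{2}+k+1=k+\frac{f_{2k+2}+1}{2}$, which is the claimed sign. The only genuine risk is an arithmetic slip in these parities; structurally the proof is just the composition of the three cited results, valid for all $k\ge 1$ with no separate base case needed since Theorem \ref{lem18} already covers $k\ge 1$.
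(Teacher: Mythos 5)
Your proposal is correct and follows essentially the same route as the paper: both identify the target as $H_{\alpha_1-f_{2k},\,f_{2k}}$ via Proposition \ref{prop-i}(i), transport it along the right edge of $U_{k,1}$ by Lemma \ref{lem13}(i) with $r=f_{2k+2}-1$, invoke Theorem \ref{lem18}, and reduce the sign exponent using the parities from Lemma \ref{lem4}. The index identifications and the parity computation in your write-up are all accurate.
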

\begin{proof}
	By Proposition \ref{prop-i}, 
	\begin{align*}
		H_{\frac{f_{2k+1}}{2}+f_{2k+3}+1,\,f_{2k}} & = H_{\frac{f_{2k+5}}{2}-f_{2k},\,f_{2k}}\\
		& = (-1)^{(f_{2k+2}-1)k}(-1)^{\frac{(f_{2k+2}-1)(f_{2k+2}-2)}{2}}H_{\frac{f_{2k+1}}{2}+1, f_{2k+3}-1}\tag{by Lemma \ref{lem13}(i)}\\
		& = (-1)^{\frac{f_{2k+2}-1}{2}}H_{\frac{f_{2k+1}}{2}+1, f_{2k+3}-1} \tag{by Lemma \ref{lem4}}\\
		& = (-1)^{k+\frac{f_{2k+2}+1}{2}}\frac{f_{2k+1}}{2}. \tag{by Lemma \ref{lem18}}
	\end{align*}
\end{proof}

\begin{corollary} \label{lem19} \emph{(Lower left corner of $T_{k,2}$)} 
	For all $k\ge 1$, $H_{\frac{f_{2k+3}}{2}+f_{2k}+1,f_{2k+1}}=f_{2k}$.
\end{corollary}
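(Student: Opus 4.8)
The plan is to feed $i=2$ into Lemma \ref{lem17} and then identify each of the two resulting $(f_{2k+1}-1)$-determinants with a boundary value I can already evaluate. Since $\gamma_2=\frac{f_{2k+3}}{2}+f_{2k+3}$ (the second element of $E'_k$), the recurrence \eqref{eq:f-rec} gives $\gamma_2-f_{2k+2}=\frac{f_{2k+3}}{2}+f_{2k}$, so $\gamma_2-f_{2k+2}+1$ is exactly the first index in our target. Lemma \ref{lem17} with $i=2$ then reads
\[H_{\frac{f_{2k+3}}{2}+f_{2k}+1,\,f_{2k+1}}=(-1)^{k}\left(H_{\frac{f_{2k+3}}{2}+f_{2k}+2,\,f_{2k+1}-1}+H_{\frac{f_{2k+3}}{2}+f_{2k}+1,\,f_{2k+1}-1}\right).\]
The guiding observation is that $f_{2k+1}-1=f_{2(k-1)+3}-1$, so both auxiliary determinants sit at the top height of the parameter-$(k-1)$ strip, and their antidiagonal sums $m+n$ place them on edges governed by Theorem \ref{lem18}.

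For the second determinant I would first compute $m+n=\frac{f_{2k+3}}{2}+f_{2k+2}$, which by \eqref{eq:f-rec} equals $\frac{f_{2k+5}}{2}=\alpha_1$, the least element of $E'_{k+1}$; hence the point lies on the right edge of $U_{k,1}$. Writing $\frac{f_{2k+3}}{2}+f_{2k}+1=\alpha_1-f_{2k+3}+1+2f_{2k}$ and $f_{2k+1}-1=f_{2k+3}-1-2f_{2k}$ (both via \eqref{eq:f-rec}), Lemma \ref{lem13}(i) with $r=2f_{2k}$ yields $H_{\frac{f_{2k+3}}{2}+f_{2k}+1,\,f_{2k+1}-1}=(-1)^{2f_{2k}k}(-1)^{f_{2k}(2f_{2k}-1)}h_{k}$, where $h_k=H_{\frac{f_{2k+1}}{2}+1,f_{2k+3}-1}$. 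Since $f_{2k}$ is odd by Lemma \ref{lem4}(i), the sign collapses to $-1$, so Theorem \ref{lem18} gives $-h_k=(-1)^{k}\frac{f_{2k+1}}{2}$.

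For the first determinant I would note $\frac{f_{2k+3}}{2}+f_{2k}+2=\gamma_2-f_{2k+2}+2$, which by Lemma \ref{lem13}(ii) applied at parameter $k-1$ with $r=0$ is the top-left corner of $U_{k-1,2}$ and therefore equals its top-right corner; by the constancy of the corner values across all $U_{k-1,i}$ (Remark \ref{rem4}, i.e. Proposition \ref{prop-i}(i) together with Lemma \ref{lem13}(i)) this coincides with the top-right corner of $U_{k-1,1}$, namely $h_{k-1}=(-1)^{k}\frac{f_{2k-1}}{2}$ by Theorem \ref{lem18}. Substituting the two values and using $f_{2k-1}+f_{2k+1}=2f_{2k}$, a direct consequence of \eqref{eq:f-rec}, gives
\[H_{\frac{f_{2k+3}}{2}+f_{2k}+1,\,f_{2k+1}}=(-1)^{k}\left((-1)^{k}\tfrac{f_{2k-1}}{2}+(-1)^{k}\tfrac{f_{2k+1}}{2}\right)=\frac{f_{2k-1}+f_{2k+1}}{2}=f_{2k}.\]
The case $k=1$ invokes $h_0$, which lies outside the stated range of Theorem \ref{lem18}, so I would verify directly that $H_{2,3}=-1$ agrees with $(-1)^{0+1}\frac{f_1}{2}$; the same computation then closes this case.

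The main obstacle is the index bookkeeping and the geometric identification: recognizing that one auxiliary determinant falls on the right edge of $U_{k,1}$ (parameter $k$) while the other falls on the top edge of $U_{k-1,2}$ (parameter $k-1$), and checking through \eqref{eq:f-rec} the several numerical coincidences ($\gamma_2-f_{2k+2}=\frac{f_{2k+3}}{2}+f_{2k}$, $\frac{f_{2k+5}}{2}=\frac{f_{2k+3}}{2}+f_{2k+2}$, $f_{2k+3}-f_{2k+1}=2f_{2k}$, $f_{2k-1}+f_{2k+1}=2f_{2k}$) that make the edge lemmas applicable with precisely the right shift $r$. The sign tracking through Lemma \ref{lem4} is routine but delicate, and I would also confirm that $r=2f_{2k}$ lies in the admissible range $0\le r\le f_{2k+2}-1$ of Lemma \ref{lem13}(i), which holds because $f_{2k+2}=2f_{2k}+f_{2k-2}$.
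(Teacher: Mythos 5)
Your proposal is correct and follows essentially the same route as the paper: apply Lemma \ref{lem17} at $i=2$, identify $H_{\frac{f_{2k+3}}{2}+f_{2k}+1,f_{2k+1}-1}$ with $-h_k$ via the right-edge relation of $U_{k,1}$ at $r=2f_{2k}$, identify $H_{\frac{f_{2k+3}}{2}+f_{2k}+2,f_{2k+1}-1}$ with $h_{k-1}$ via the corner of $U_{k-1,2}$ and the constancy across $i$, and combine using $f_{2k-1}+f_{2k+1}=2f_{2k}$. Your explicit verification of the $k=1$ case (where $h_0$ falls outside the stated range of Theorem \ref{lem18}) is a point of care the paper's own proof glosses over.
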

\begin{proof}
	From Lemma \ref{lem17}, we have
	\begin{align}
		H_{\frac{f_{2k+3}}{2}+f_{2k}+1,f_{2k+1}}&=(-1)^k(H_{\frac{f_{2k+3}}{2}+f_{2k}+2,f_{2k+1}-1}+H_{\frac{f_{2k+3}}{2}+f_{2k}+1,f_{2k+1}-1}).\label{eq:lem19-1}
	\end{align}
	Note that $H_{\frac{f_{2k+3}}{2}+f_{2k}+2,f_{2k+1}-1}$ is on the upper left corner of $U_{k-1,2}$. By Lemma \ref{lem13}(iii), 
	\[H_{\frac{f_{2k+3}}{2}+f_{2k}+2,f_{2k+1}-1}=H_{\frac{f_{2k-1}}{2}+f_{2k+3}+1,f_{2k+1}-1}.\]
	According to Proposition \ref{prop-i} and Lemma \ref{lem13}, the determinants on the upper left corner of $U_{k-1,1}$ and $U_{k-1,1}$ are equal. Namely, $H_{\frac{f_{2k-1}}{2}+f_{2k+3}+1,f_{2k+1}-1}=H_{\frac{f_{2k-1}}{2}+1,f_{2k+1}-1}$. Therefore, 
	\begin{equation}
		H_{\frac{f_{2k+3}}{2}+f_{2k}+2,f_{2k+1}-1}=H_{\frac{f_{2k-1}}{2}+1,f_{2k+1}-1}.\label{eq:lem19-2}
	\end{equation}
	It follows from Lemma \ref{lem13}(i) and Lemma \ref{lem4} that 
	\begin{align}
		H_{\frac{f_{2k+3}}{2}+f_{2k}+1,,\,f_{2k+1}-1} & =(-1)^{2f_{2k}k}(-1)^{\frac{2f_{2k}(2f_{2k}-1)}{2}}H_{\frac{f_{2k+1}}{2}+1,\,f_{2k+3}-1}\nonumber\\
		& = - H_{\frac{f_{2k+1}}{2}+1,\,f_{2k+3}-1}.\label{eq:lem19-3}
	\end{align}
	Using Eq.~\eqref{eq:lem19-1}, Eq.~\eqref{eq:lem19-2} and Eq.~\eqref{eq:lem19-3}, we have 
	\begin{align*}
		H_{\frac{f_{2k+3}}{2}+f_{2k}+1,f_{2k+1}} & = (-1)^{k}\left(H_{\frac{f_{2k-1}}{2}+1,f_{2k+1}-1}-H_{\frac{f_{2k+1}}{2}+1,\,f_{2k+3}-1}\right)\\
		&=(-1)^k\left((-1)^k\frac{f_{2k-1}}{2}-(-1)^{k+1}\frac{f_{2k+1}}{2}\right)\tag{by Lemma \ref{lem18}}\\
		&=f_{2k}.
	\end{align*}
\end{proof}

\section{Proof of Theorem \ref{thm1}, \ref{thm2} and \ref{thm3}}\label{sec:6}
\begin{proof}[Proof of Theorem \ref{thm1}]
	Suppose $(m,n)\in U_{k,i}$ for some $i$. Then $\alpha_{i}-f_{2k+2}<n+m\leq \alpha_i$ and $m=\alpha_i-f_{2k+2}+1-n+r$ where $0\leq r<f_{2k+2}$. 

	\textbf{Case 1}: $n=f_{2k+3}-1$. Applying Lemma \ref{lem13}(i), Proposition \ref{prop-i} and then Lemma \ref{lem13}(i) again, we have 
	\begin{equation}\label{eq:pf:thm1-1}
		H_{\alpha_i-f_{2k+3}+1,f_{2k+3}-1} = H_{\alpha_1-f_{2k+3}+1,f_{2k+3}-1} 
		 = (-1)^{k+1}\frac{f_{2k+1}}{2}. 
	\end{equation}
		where the last equality follows from Theorem \ref{lem18}. Since  $r=\alpha_i-m-n=\frac{f_{2k+5}}{2}-\Phi_{k+1}(m+n)$, by Lemma \ref{lem13}(iii), 
	\begin{align*}
	H_{m,n}& =H_{\alpha_i-f_{2k+4}+2+r,f_{2k+3}-1}\\
	&= (-1)^r H_{\alpha_i-f_{2k+3}+1,f_{2k+3}-1}\\
	& = (-1)^{\frac{f_{2k+5}}{2}-\Phi_{k+1}(m+n)}H_{\alpha_i-f_{2k+3}+1,f_{2k+3}-1}\\
	& =(-1)^{k+1}(-1)^{\frac{f_{2k+5}}{2}-\Phi_{k+1}(m+n)}\frac{f_{2k+1}}{2}
	\end{align*}
	where the last equality follows from Eq.~\eqref{eq:pf:thm1-1}.

    \textbf{Case 2}: $n=f_{2k}$. By Proposition \ref{prop-i}, we have 
    \begin{align*}
    H_{m,n}& = H_{\alpha_1-f_{2k},\, f_{2k}} \\
           & = (-1)^{\ell k}(-1)^{\frac{\ell(\ell-1)}{2}}H_{\alpha_1-f_{2k+3}+1,f_{2k+3}-1} \tag{by Lemma \ref{lem13}(iii)}\\
           & = (-1)^{k+1}(-1)^{\frac{f_{2k+2}-1}{2}}\cdot \frac{f_{2k+1}}{2}  \tag{by Theorem \ref{lem18}}
	\end{align*}
	where $\ell=f_{2k+3}-1-f_{2k}$ is even by Lemma \ref{lem4}.
    
	\textbf{Case 3}: $f_{2k}<n<f_{2k+3}-1$. If $m+n=\alpha_i$ (or $\alpha_i-f_{2k+2}+1$), then applying Lemma \ref{lem13}(i) (or Lemma \ref{lem13}(ii)) and then Eq.~\eqref{eq:pf:thm1-1}, we have  
	\begin{align*}
		H_{m,n} & = (-1)^{\ell k}(-1)^{\frac{\ell(\ell-1)}{2}}H_{\alpha_i-f_{2k+3}+1,f_{2k+3}-1}\\
		& = (-1)^{\ell k}(-1)^{\frac{\ell(\ell-1)}{2}}(-1)^{k+1}\frac{f_{2k+1}}{2}
	\end{align*}
	where $\ell = f_{2k+3}-1-n$. If $\alpha_i-f_{2k+2}+1<m+n<\alpha_i$, then Lemma \ref{lem8} yields $H_{m,n}=0$.
\end{proof}

\begin{proof}[Proof of Theorem \ref{thm2}]
	Suppose $(m,n)\in V_{k,i}$ for some $i$. Then $\beta_i< n+m\leq \beta_i+f_{2k+1}$.
	
	\textbf{Case 1}: $n=f_{2k+2}-1$. By Lemma \ref{lem14}(iii) \& (i), we have 
	\begin{align}
	H_{m,n} & = H_{\beta'_i-f_{2k+1}+2,\, f_{2k+2}-1} \nonumber\\
			& = (-1)^{(f_{2k+1}-1)k}(-1)^{\frac{(f_{2k+1}-1)f_{2k+1}}{2}}H_{\beta'_i+1,\, f_{2k}}.\label{eq:pf:thm2-1}
	\end{align}
	According to Proposition \ref{prop-i}(i) and Corollary \ref{coro:1}, 
	\begin{equation}
		H_{\beta'_i+1,\, f_{2k}}=H_{\beta'_1+1,\, f_{2k}}=(-1)^{k+\frac{f_{2k+2}+1}{2}}\frac{f_{2k+1}}{2}.\label{eq:pf:thm2-2}
	\end{equation}
	The result follows from Eq.~\eqref{eq:pf:thm2-1} and Eq.~\eqref{eq:pf:thm2-2}.
	
	\textbf{Case 2}: $n=f_{2k}$. By Proposition \ref{prop-i}, $H_{m,n}=H_{\alpha_1-f_{2k},\, f_{2k}}$. Then the result follows from Theorem \ref{thm1}(ii).
	
	\textbf{Case 3}: $f_{2k}<n<f_{2k+2}-1$. If $m+n=\beta_i+1$ (or $\beta_i+f_{2k+1}$), then by Lemma \ref{lem14}(i) (or Lemma \ref{lem14}(ii)), we have 
	\[H_{m,n}=-(-1)^{(f_{2k+2}-n)(k+1)}(-1)^{\frac{(f_{2k+2}-1-n)(f_{2k+2}-2-n)}{2}} \frac{f_{2k+1}}{2}.\]
	If $\beta_i+1<m+n<\beta_i+f_{2k+1}$, then Lemma \ref{lem8} shows $H_{m,n}=0$.
\end{proof}

\begin{proof}[Proof of Theorem \ref{thm3}]
	Suppose $(m,n)\in T_{k,i}$ for some $i$. Then $\gamma_i-f_{2k}< n+m\leq \gamma_i$.
	
	\textbf{Case 1}: $n=f_{2k+2}-1$. By Lemma \ref{lem15}(iii) \& (i),
	\begin{align}
	H_{m,n}& = H_{\gamma_i-f_{2k+3}+2,\, f_{2k+2}-1} \nonumber\\
		   & = (-1)^{(f_{2k}-1)k}(-1)^{\frac{(f_{2k}-1)(f_{2k}-2)}{2}}H_{\gamma_i-f_{2k+2}+1,\, f_{2k+1}}\nonumber\\
		   & = (-1)^{\frac{f_{2k}-1}{2}}H_{\gamma_i-f_{2k+2}+1,\, f_{2k+1}}.\label{eq:pf:thm3-1}
	\end{align}
	where the last equality follows from Lemma \ref{lem4}(i). Using Proposition \ref{prop-i}(ii) and Corollary \ref{lem19}, 
	\begin{equation}
		H_{\gamma_i-f_{2k+2}+1,\, f_{2k+1}}=H_{\gamma_2-f_{2k+2}+1,\, f_{2k+1}}=f_{2k}.\label{eq:pf:thm3-2}
	\end{equation}
	The result follows from Eq.~\eqref{eq:pf:thm3-1} and Eq.~\eqref{eq:pf:thm3-2}.

	\textbf{Case 2}: $n=f_{2k+1}$. Write $m=\gamma_i-f_{2k+2}+r$ with $1\leq r\leq f_{2k}$. By Proposition \ref{prop-i}(ii) and Corollary \ref{lem19}, 
	\begin{align*}
	H_{m,n} &  = (-1)^{r+1}H_{\gamma_{2}-f_{2k+2}+1,\, f_{2k+1}} = (-1)^{r+1}f_{2k}.	
	\end{align*}
	Note that $\gamma_i=m+n+f_{2k}-r$ and $1\leq r\leq f_{2k}$. Consequently, \[\frac{f_{2k+3}}{2}=\Phi_k(\gamma_i)=\Phi_{k}(m+n+f_{2k}-r)=\Phi_{k}(m+n)+f_{2k}-r\]
	which gives $r=\Phi_{k}(m+n)-\frac{f_{2k+1}}{2}$. Then the result follows.
	
	\textbf{Case 3}: $f_{2k}<n<f_{2k+2}-1$. If $m+n=\gamma_i-f_{2k}+1$ (or $\gamma_i$), then by Lemma \ref{lem15}(i) (or Lemma \ref{lem15}(ii)), 
	\[H_{m,n}=(-1)^{(f_{2k+2}-1-n)k}(-1)^{\frac{(f_{2k+2}-1-n)(f_{2k+2}-2-n)}{2}}(-1)^{\frac{f_{2k}-1}{2}}\cdot f_{2k}.\]
	If $\gamma_i-f_{2k}+1<m+n<\gamma_i$, then Lemma \ref{lem8} yields $H_{m,n}=0$.
\end{proof}

\section*{Acknowledgement}
    This work was finished while W. Wu was visiting the Department of Mathematics and Statistics, University of Helsinki. The visit was supported by China Scholarship Council (File No. 201906155024). The research was partially supported by Guangdong Natural Science Foundation (Nos. 2018A030313971, 2018B0303110005).

\end{document}